\documentclass[11pt,reqno]{amsart}
\usepackage{graphicx}%
\usepackage{multirow}%
\usepackage{amsmath,amssymb,amsfonts}%
\usepackage{amsthm}%
\usepackage{mathrsfs}%
\usepackage{subfigure}
\usepackage[title]{appendix}%
\usepackage{xcolor}%
\usepackage{textcomp}%
\usepackage{manyfoot}%
\usepackage{comment}
\usepackage{enumitem}
\usepackage{booktabs}%
\usepackage{algorithm}%
\usepackage[colorinlistoftodos,prependcaption,textsize=tiny]{todonotes}
\usepackage{algorithmicx}%
\usepackage{algpseudocode}%
\usepackage{listings}%
\usepackage{upgreek}

\usepackage{epsfig}
\usepackage{geometry}
\usepackage{setspace}
\usepackage{tikz,tikz-cd}
\usepackage{pgfplots}
\pgfplotsset{compat=newest}
\usepgfplotslibrary{fillbetween}
\usetikzlibrary{positioning}
\usetikzlibrary{decorations.pathreplacing}
\usetikzlibrary{decorations,arrows}
\usetikzlibrary{decorations.markings}
\usetikzlibrary{patterns}
\usetikzlibrary{quotes,angles}
\usetikzlibrary{arrows}
\usepackage{bm,bbm}
\usepackage{subcaption}
\graphicspath{ {./fig/} }
\usepackage{cleveref}
\def\eu{\ensuremath{\mathrm{e}}}
\def\iu{\ensuremath{\mathrm{i}}}
\def\du{\ensuremath{\mathrm{d}}}

\newtheorem{theorem}{Theorem}[section]

\newtheorem{assump}{Assumption}[section]
\newtheorem{proposition}{Proposition}[section]%
\newtheorem{lemma}{Lemma}[section]

\newtheorem{example}{Example}[section]%
\newtheorem{remark}{Remark}[section]%

\newtheorem{definition}{Definition}[section]%

\numberwithin{equation}{section}

\begin{document}
	\title[Resolvent Convergence and Patch Approximation for Guided Modes]{Resolvent Convergence and Patch Approximation for Subwavelength Guided Modes in Non-Periodic Systems of High-Contrast Resonators}
	\author{Habib Ammari}
	\address{ETH Z\"urich, Department of Mathematics, Rämistrasse 101, 8092 Z\"urich, Switzerland}
	\email{habib.ammari@math.ethz.ch} 
	\author{Borui Miao}
	\address{Yau Mathematical Sciences Center, Tsinghua University, 100084 Beijing, China}
	\email{mbr@mail.tsinghua.edu.cn}  
	\author{Jiayu Qiu}
	\address{ETH Z\"urich, Department of Mathematics, Rämistrasse 101, 8092 Z\"urich, Switzerland}
	\email{jiayu.qiu@sam.math.ethz.ch}
	
	\date{}
	
	\subjclass[2020]{Primary 65N25, 35B34, 35C20; Secondary 35J05, 65N38}
	
	\keywords{Metamaterial, subwavelength waveguiding, bent interface, bent defect, subwavelength frequencies, capacitance operator, resolvent convergence, patch approximation, error approximation}

	\begin{abstract}
		This paper develops, analyzes, and validates a fast algorithm for computing guided modes within bent interfaces and non-periodic defects in high-contrast resonator crystals, where the Floquet--Bloch theory is not applicable. We first establish the resolvent convergence of the governing continuous operator to the discrete capacitance operator. This result rigorously justifies the reduction of the continuous spectral problem to a discrete eigenvalue problem. Then, we develop a truncation scheme of the discrete operator, named the patch approximation, and derive a rigorous error estimate for the patch approximation. Finally, we validate the accuracy and efficiency of our scheme through various examples. Our framework provides a general, computationally efficient, and rigorously justified approach to simulate guided modes in non-periodic systems of high-contrast resonators.
	\end{abstract}

	\maketitle
	
	\section{Introduction}
	
	Subwavelength periodic structures are arrangements of high-contrast resonators, which, due to their low-frequency resonances, can couple to waves with wavelengths much larger than the typical size of a single resonator, therefore allowing the manipulation of waves at subwavelength scales. In particular, efficient waveguiding at subwavelength scales can be achieved when defects are introduced into these periodic structures. Various applications have been reported in the field of waveguiding at subwavelength scales such as signal processing, telecommunications, and analog computing, as well as emerging technologies in quantum science, biomedicine, and super-resolution sensing \cite{refguiding1,refguiding2,bookguiding}. 
	
	In this paper, we consider crystals of high-contrast resonators and introduce a fast computational method for computing defect modes, guided modes in line defects, and guided modes along bent interfaces at subwavelength scales. For line defects, the Floquet--Bloch transform is applied to reduce the problem to a strip, as in \cite{jems2021,line-defect-borui,santosa}, which greatly saves computational resources. However, for bent interfaces or more general non-periodic defective structures, such a reduction is no longer available, which leads to considerable theoretical and computational difficulties. On the one hand, in such cases, there is no rigorous theory justifying the reduction of the continuous resonance problem to its discrete counterpart. On the other hand, one cannot rely on quasi-periodic problems on strips to compute the spectrum and associated modes. Therefore, we need to develop a different framework that does not require periodicity of the resonators. 
	
	Our approach consists of two main steps. First, we derive a discrete approximation of the continuous subwavelength spectral problem in terms of the capacitance operator. This approach generalizes to non-periodic systems the asymptotic approach 
	developed in \cite{ammari2024functional,ammari2025mathematical} for periodic structures where the spectral properties of the continuous operator describing these structures are approximated at the subwavelength regime by the spectral properties of a discrete operator as the contrast between the material parameters inside and outside the resonators goes to zero.   
	Second, we approximate this discrete operator by a local truncation procedure, called the patch approximation, in order to reduce the computational cost while retaining controllable accuracy. Both the discrete approximation of the continuous problem and the patch approximation of the capacitance operator are rigorously justified. 
	Specifically, for the first step, we prove the convergence of the resolvent of the continuous operator to the capacitance operator as the contrast parameter goes to zero. For the second step, we estimate the truncation error by deriving an operator-norm error bound for the patch approximation. In contrast to the characterizations of the defect and guided modes provided in \cite{anderson, jems2021}, the spectral analysis of the capacitance operator gives a more direct approach that does not rely on the periodicity of the overall structure. The resulting numerical method, supported by these theoretical results, is efficient and significantly reduces computational costs compared to methods such as the supercell method or the method based on the Dirichlet-to-Neumann map \cite{sofiane,supercell2,supercell1,sonia}. Indeed, directly calculating the band structure of the crystal involves high computational costs owing to the fine mesh required by the high-contrast regime and the size of the supercell. We illustrate the performance of our method through a variety of examples, including bent waveguides and interfaces between two generalized honeycomb-structured materials. 
	
	Let us now highlight the main contributions of this paper, which are as follows.
	\begin{enumerate}
		\item[(i)] \emph{Resolvent convergence of the subwavelength resonance problem}: In \Cref{thm_cont_to_disc_resolvent_converge}, we establish the norm-resolvent convergence of the operator that governs the subwavelength resonance problem to the discrete capacitance operator $\mathcal{C}$. After a rescaling, the resolvent convergence holds in operator norm, as the contrast parameter $\delta$ tends to zero. This result provides a rigorous justification of the discrete approximation to the high-contrast continuous problem. \Cref{exmp_resolvent_converge_periodic} and \Cref{exmp_resolvent_converge_interface} are provided to illustrate the application to approximate the defect modes and the interface modes.  
		\item[(ii)] \emph{Patch approximation of the capacitance operator}: To efficiently assemble the discrete capacitance operator, we formulate the patch approximation, which replaces the full boundary value problem by finitely many local problems on patches. \Cref{thm:op_error_capacitance} shows that the difference between the full capacitance operator $\mathcal{C}$ and its patch approximation $\widetilde{\mathcal{C}}$ satisfies
		\[
		\|\mathcal{C}-\widetilde{\mathcal{C}}\|_{\mathcal{B}(\ell^2(\Sigma))}
		\leq C \widetilde{\rho}^{M},
		\quad 0<\widetilde{\rho}<1,\]
		where $M$ is the size of the patch. Various numerical examples are given in \Cref{sec:num_patch} to illustrate the numerical analysis and the acceleration of the patch approximation.
	\end{enumerate}
	
	Our paper is organized as follows. In \Cref{sec:general}, we introduce the subwavelength spectral problem and state the resolvent convergence theorem. In \Cref{sec_heuristics}, we describe the main ideas of the proof and discuss its implications for defect and interface modes. In particular, we show how it rigorously justifies the discrete approximations of defect modes and guided modes in both straight and curved waveguides. In \Cref{sec:PatchApprox}, we introduce the patch approximation of the capacitance operator, with the corresponding error estimates proved in \Cref{sec:err-approx}. In \Cref{sec:num_patch}, we present a variety of numerical examples to illustrate the efficiency of our method. The appendices contain technical results on the exterior Dirichlet-to-Neumann map, the proof of the resolvent convergence theorem, decay estimates for the capacitance operator, and implementation details for assembling the capacitance operator.
	
	It is worth emphasizing that our patch approximation scheme provides a low-cost and rigorously justified approach for computing guided modes in non-periodic high-contrast resonator structures. Moreover, the scheme is efficient in computing guided modes in meta-structures, since it retains the dominant interactions between resonators near the defect or interface when performing the continuous-to-discrete reduction. Therefore, the numerical method developed here can be used for the optimal design of waveguides in meta-structures \cite{design1,design2}. It can also be adapted to compute skin effect modes in non-Hermitian models \cite{Ammari2024,skin3d} and space-time localized or guided modes in time-modulated crystals \cite{erik,liora}. Furthermore, as shown in \cite{fabry3}, the capacitance operator formulation extends beyond the subwavelength regime to compute the Fabry-P\'erot band functions. Generalizing the patch approximation for computing guided modes beyond the subwavelength regime would be interesting in many applications. This would be the subject of future work.

	\section{Subwavelength spectral problem and convergence of the resolvent} \label{sec:general}
	\subsection{General setting}
	We consider a two-dimensional resonator system consisting with $N_{\mathrm{type}}$ types of resonators. Specifically, let $D^{(1)},D^{(2)},\ldots, D^{(N_{\mathrm{type}})}$ be simply connected domains with $C^1$-boundaries. For simplicity, we suppose that the area of each reference resonator is normalized, \emph{i.e.}, $|D^{(n)}|=1$ for all $n=1,\ldots,N_{\mathrm{type}}$. Since there are only finitely many resonator types, there exist constants $0<\ell_{-}\le  \ell_{+}<\infty$ such that $\ell_{-}\le  |\partial D^{(n)}|\le  \ell_{+}$ for all $1\le  n\le N_{\mathrm{type}}$. Associated with each $n\in\{1,2,\ldots,N_{\mathrm{type}}\}$, there is a discrete set $\Lambda^{(n)}\subset \mathbb{R}^2$ that denotes the translations of the corresponding reference resonator $D^{(n)}$:
	\begin{equation*}
		\mathcal{D}^{(n)}:= \bigcup_{\bm{u}\in \Lambda^{(n)}}(\bm{u} +D^{(n)}).
	\end{equation*}
	We let $\mathcal{D}$ be the union of the resonators:
	\[\mathcal{D}:= \bigcup_{1\le  n\le  N_{\mathrm{type}}}\mathcal{D}^{(n)}.\]
	We further assume that the translations $\mathcal{D}^{(n)}$ are non-overlapping, and there are sufficiently many resonators in the plane $\mathbb{R}^2$ in the following sense.
	\begin{assump} \label{asmp_geometry_assumption}
		The translations of resonators $\{ D^{(n)} \}_{n=1}^{N_{\mathrm{type}}}$ are non-overlapping, that is, 
		\begin{gather}\label{eq_geometry_assumption_4}
			\begin{aligned}
				\inf_{n\neq n'}\operatorname{dist}\big(\mathcal{D}^{(n)},\mathcal{D}^{(n')}\big)&\ge c>0,\\
				\min_{1\le n\le N_{\mathrm{type}}} \Big\{\inf_{\bm{u}\neq \bm{u}'\in \Lambda^{(n)}} \operatorname{dist}(\bm{u} +D^{(n)},\bm{u}' +D^{(n)} )\Big\}& \ge c>0.
			\end{aligned}
		\end{gather} 
		Moreover, there exist $M_{\mathrm{poly}}$ polygons $Y^{(1)},Y^{(2)},\cdots,Y^{(M_{\mathrm{poly}})}$ and associated discrete sets \[\Sigma^{(1)},\Sigma^{(2)},\cdots,\Sigma^{(M_{\mathrm{poly}})}\subset \Lambda,\] 
		such that they form a partition of $\mathbb{R}^2$:
		\begin{align} 
			(Y^{(m)}+\Sigma^{(m)})&\cap (Y^{(m')}+\Sigma^{(m')})=\emptyset,\quad 1\le m\neq m' \le M_{\mathrm{poly}},\label{eq_geometry_assumption_1}\\
			&\bigcup_{1\le  m\le  M_{\mathrm{poly}}}\overline{(Y^{(m)}+\Sigma^{(m)})}=\mathbb{R}^2 ,\label{eq_geometry_assumption_2}
		\end{align}
		where the set $Y^{(m)}+\Sigma^{(m)}$ denotes the union
		\[
		Y^{(m)}+\Sigma^{(m)}:=\bigcup_{\bm v\in\Sigma^{(m)}}(\bm v+Y^{(m)}).
		\]
		And the primitive lattice $\Lambda$ is defined by 
		\[ \Lambda:= \{m\mathring{\bm{v}}_{_1}+n\mathring{\bm{v}}_{_2}:(m,n)\in\mathbb{Z}^2\} . \]
		Furthermore, each translated polygon $Y^{(m)} + \Sigma^{(m)}$ contains exactly $K$ resonators that are well-separated from the boundary of the polygon. 
		That is, for all $\bm{v}\in \Sigma^{(m)},$ and $1\le  m\le  M_{\mathrm{poly}}$,
		\begin{equation} \label{eq_geometry_assumption_3}
			(\bm{v}+Y^{(m)})\cap \mathcal{D}\Subset \bm{v}+Y^{(m)}, \quad\text{and}\quad \operatorname{dist}\big( \partial( \bm{v}+Y^{(m)}), \mathcal{D} \big) \ge c>0 .
		\end{equation}
	\end{assump}
	From the above assumptions, we label each resonator in the polygons $Y^{(m)}_{\bm{v}}:=\bm{v}+Y^{(m)}$ by a global index $\sigma=(m,\bm{v},k)$, with $1\leq m\leq M_{\mathrm{poly}}$, $\bm{v}\in\Sigma^{(m)}$ and $1\le k\le K$. We denote the corresponding resonator by $D_{\sigma}:=D_{m,\bm{v},k}$. Thus, the collection of resonators $\mathcal{D}$ can be equivalently written as
	\[ \mathcal{D} = \bigcup_{\sigma\in \Sigma} D_{\sigma}, \]
	where the index set $\Sigma$ is given by
	\begin{equation}
		\Sigma:=\bigcup_{1\le  m\le  M_{\mathrm{poly}}}\big\{\sigma=(m,\bm{v},k):\, \bm{v}\in \Sigma^{(m)},1\le k\le K\big\}.
	\end{equation}
	
	Assumption \eqref{eq_geometry_assumption_3} ensures that there exists a resonator in arbitrary directions, no matter how far away from the origin. Physically, such an `appearance of resonators at infinity' leads to multiple scattering of waves when we impose the Dirichlet boundary condition on the resonator boundaries. This mechanism guarantees the exponential off-diagonal decay of the capacitance operator; see \Cref{rmk_exp_decay} and \Cref{apsec:expdecay}. By contrast, when this condition fails, the capacitance operator is expected to exhibit only power-law decay; see, \emph{e.g.}, \cite{Ammari2023,Ammari2024b}. 
	
	\begin{example}[Defect modes]\label{exmp:DefectModes}
		Take 
		\[ D^{(1)} = B(0,0.3),\quad D^{(2)} = B(0,0.15), \]
		and consider the square lattice $\Lambda $, where 
		\[ 
		\mathring{\bm{v}}_{_1} = (1,0)^{\top},\quad \mathring{\bm{v}}_{_2} = (0,1)^{\top}.
		\]
		We set the discrete sets as  
		\[
		\Lambda^{(1)} = \Lambda\setminus\{\mathbf{0}\} ,\quad \Lambda^{(2)} = \{\mathbf{0}\}.
		\]
		
		There exists a polygon $Y^{(1)}=\{s\mathring{\bm{v}}_{_1}+t\mathring{\bm{v}}_{_2}:s,t\in[-1/2,1/2)\}$ and a set $\Sigma^{(1)} = \Lambda$ such that the translated polygons form a partition of $\mathbb{R}^2$. It is not difficult to verify that the non-overlapping assumption \eqref{eq_geometry_assumption_1}, and the separation assumption \eqref{eq_geometry_assumption_3} hold. Moreover, such systems exhibit defect modes, and we discuss them in a more general setting in \Cref{exmp_resolvent_converge_periodic}.
	\end{example}
	
	\begin{example}[Interface modes]\label{exmp_interfaceMode}
		Another important case concerns topological interface modes. Basically, it is known that adjoining two materials with distinct topological properties can produce robust localized states near the interface, as a manifestation of the bulk-edge correspondence principle \cite{qizhang11topo_insulator,bernivig13topo_insulator}. As an example, in this paper we consider generalized honeycomb structures, as discussed in detail in \cite{Wu2015,Miao2024,Cao2025,Ammari2026}. To start, we take two unit vectors that generate the lattice
		\[
		\mathring{\bm{v}}_{_1}=(\sqrt{3}/2,-1/2)^\top,\quad
		\mathring{\bm{v}}_{_2}=(\sqrt{3}/2,1/2)^\top,
		\]
		We let 
		\[
		D^{(i)}=B(0,r)+(1+\varepsilon)\bm{d}_i,\quad
		D^{(i+6)}=B(0,r)+(1-\varepsilon)\bm{d}_i,\quad 1\le i\le 6,
		\]
		where $r<1/6$ and $0<\varepsilon\ll 1$ are chosen so that each resonator $D^{(n)}$ lies within the polygon 
		\[ Y^{(1)}=\{s\mathring{\bm{v}}_{_1}+t\mathring{\bm{v}}_{_2}:s,t\in[-1/2,1/2)\} .\] 
		Here, the vectors $\{ \bm{d}_{i} \}_{i=1}^6$ are given as follows
		\begin{gather*}
			\begin{aligned}
				\bm{d}_1:=\frac{1}{3}\mathring{\bm{v}}_{_2},\quad \bm{d}_2:=-\frac{1}{3}\mathring{\bm{v}}_{_1}+\frac{1}{3}\mathring{\bm{v}}_{_2},\quad \bm{d}_3:=\frac{1}{3}\mathring{\bm{v}}_{_1},\\
				\bm{d}_4:=-\frac{1}{3}\mathring{\bm{v}}_{_1},\quad \bm{d}_5:=\frac{1}{3}\mathring{\bm{v}}_{_1}-\frac{1}{3}\mathring{\bm{v}}_{_2} ,\quad
				\bm{d}_6:=-\frac{1}{3}\mathring{\bm{v}}_{_2} .
			\end{aligned}
		\end{gather*}
		To determine the structure of the interface, we choose the periodic direction and the extending direction $\bm{v}_{\alpha},\bm{v}_{\beta}\in\Lambda$ so that the following relation holds:
		\[\Lambda = \{m\bm{v}_{\alpha}+n\bm{v}_{\beta}:(m,n)\in\mathbb{Z}^2\}.\] 
		The interface is produced by choosing 
		\[
		\Lambda^{(i)}=\{m\bm{v}_{\alpha}+n\bm{v}_{\beta}:n\ge 0,\ m\in\mathbb{Z}\},\quad 1\le i\le 6,
		\]
		and
		\[
		\Lambda^{(i)}=\{m\bm{v}_{\alpha}+n\bm{v}_{\beta}:n<0,\ m\in\mathbb{Z}\},\quad 7\le i\le 12.
		\]
		This creates two bulk media separated by a straight interface parallel to $\mathring{\bm{v}}_{\alpha}$. 
		
		We illustrate two examples in \Cref{fig:gen_honey}. 
		\begin{figure}[htbp] 
			\centering
			\includegraphics[width = 0.45\textwidth]{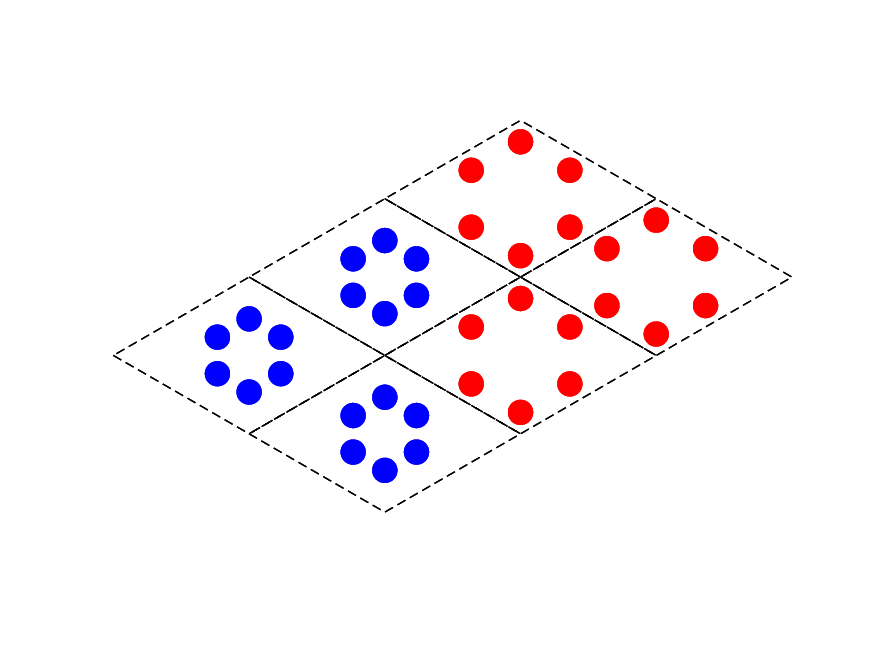}
			\hspace{-0.4cm}
			\includegraphics[width = 0.45\textwidth]{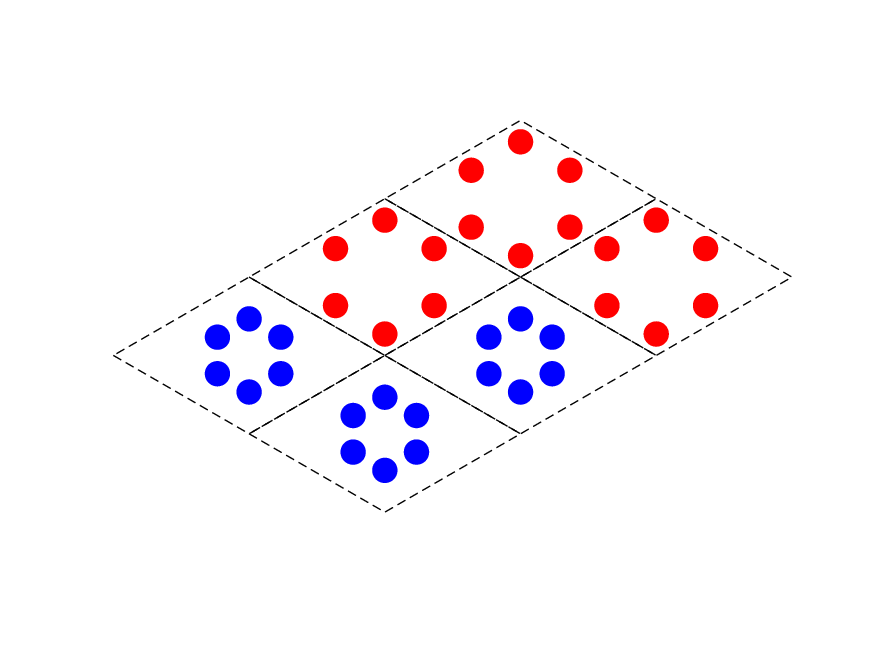}
			\caption{Interfaces in a generalized honeycomb structure. Left panel: type-I interface. Right panel: type-II interface.}
			\label{fig:gen_honey}
		\end{figure}
		In the left panel of the figure, we let 
		\[ \bm{v}_{\alpha} = \mathring{\bm{v}}_{_2}-\mathring{\bm{v}}_{_1},\quad \bm{v}_{\beta} = \mathring{\bm{v}}_{_2} \]
		and, in the right panel, we let 
		\[ \bm{v}_{\alpha} = \mathring{\bm{v}}_{_2}+\mathring{\bm{v}}_{_1},\quad \bm{v}_{\beta} = \mathring{\bm{v}}_{_2}. \]
		In \cite{borui-cmp}, these interfaces are referred to as \emph{type-I} and \emph{type-II} interfaces, respectively. 
		
	\end{example}

	\subsection{Subwavelength spectral problem and equivalent formulations}
	We let $\delta>0$ be the material contrast parameter and assume that $\delta \ll 1$. We set $\Omega:=\mathbb{R}^2\setminus \overline{\mathcal{D}}$ and consider the following subwavelength spectral problem for $z=\mathcal{O}(\delta)$:
	\begin{equation} \label{eq_full_pde_subwavelength_resonance}
		\left\{
		\begin{aligned}
			&-\Delta u-z u=0 \quad \text{in } \Omega:=\mathbb{R}^2\setminus \overline{\mathcal{D}}, \\
			&-\Delta u-z u=0 \quad \text{in } \mathcal{D}, \\
			&u\big|_{-}=u\big|_{+} \quad \text{on } \partial \mathcal{D}, \\
			&\left.\frac{\partial u}{\partial \nu}\right|_{-}=\left.\delta\frac{\partial u}{\partial \nu}\right|_{+} \quad \text{on } \partial \mathcal{D}.
		\end{aligned}
		\right.
	\end{equation}
	Here and thereafter, $\nu$ denotes the unit outward normal to $\partial \mathcal{D}$  and the subscripts $+$ and $-$ denote traces taken from the exterior and interior of $\mathcal{D}$, respectively.
	
	There are two equivalent formulations of \eqref{eq_full_pde_subwavelength_resonance}. The first is to write \eqref{eq_full_pde_subwavelength_resonance} as the eigenvalue problem of an elliptic operator:
	\begin{equation*}
		(\mathcal{L}_{\delta}-z)u=0
	\end{equation*}
	in which $\mathcal{L}_{\delta}$ is defined as
	\begin{gather*}
		\begin{aligned}
			\mathcal{L}_{\delta}&:\operatorname{dom}(\mathcal{L}_{\delta})\subset L^2(\mathbb{R}^2)\to L^2(\mathbb{R}^2),\\ \mathcal{L}_{\delta}u&:=-\big(\delta^{-1}\mathbbm{1}_{\Omega}+\mathbbm{1}_{\mathcal{D}}\big)\nabla\cdot \Big(\big(\delta\mathbbm{1}_{\Omega}+\mathbbm{1}_{\mathcal{D}} \big)\nabla u\Big),
		\end{aligned}
	\end{gather*}
	with the domain
	\begin{equation*}
		\operatorname{dom}(\mathcal{L}_{\delta}):=\Big\{u\in H^1(\mathbb{R}^2):\big(\delta^{-1}\mathbbm{1}_{\Omega}+\mathbbm{1}_{\mathcal{D}}\big)\nabla\cdot \Big(\big(\delta\mathbbm{1}_{\Omega}+\mathbbm{1}_{\mathcal{D}} \big)\nabla u\Big)\in L^2(\mathbb{R}^2)\Big\}.
	\end{equation*}
	Here, $\mathbbm{1}_{\Omega}$ (respectively, $\mathbbm{1}_{\mathcal{D}}$) denotes the characteristic function of $\Omega$ (respectively, $\mathcal{D}$). In the following, we use $H^{1}(\mathcal{D})$ for the space of functions in $L^2(\mathcal{D})$ whose weak derivatives also  lie in $L^2(\mathcal{D})$, $H^{1/2}(\partial \mathcal{D})$ for the trace space of $H^1(\mathcal{D})$ and $H^{-1/2}(\partial \mathcal{D})$ for the dual of $H^{1/2}(\partial \mathcal{D})$.  
	
	Since we are interested in the spectral property of $\mathcal{L}_{\delta}$ as $\delta\to 0$, whose $\delta$-dependent domain hinders the asymptotic analysis, it is convenient to introduce an alternative characterization of the spectral problem \eqref{eq_full_pde_subwavelength_resonance}, which has a $\delta$-independent domain. The idea is to reformulate the spectral problem \eqref{eq_full_pde_subwavelength_resonance} in the interior domain $\mathcal{D}$ (\emph{i.e.}, inside the resonators) by eliminating the transmission boundary conditions with the help of the exterior Dirichlet-to-Neumann map \cite{Feppon2024}. In this case, the exterior Dirichlet-to-Neumann map $\mathcal{T}^{z}$ is defined as follows. 
	\begin{definition}
		For $\phi\in H^{1/2}(\partial \mathcal{D})$, the exterior Dirichlet-to-Neumann map $\mathcal{T}^{z}$ is given by
		\begin{equation*}
			\begin{aligned}
				\mathcal{T}^{z}:\quad
				H^{1/2}(\partial \mathcal{D}) &\to H^{-1/2}(\partial \mathcal{D}), \\
				\phi & \mapsto \left.\frac{\partial u}{\partial \nu}\right|_{+},
			\end{aligned}
		\end{equation*}
		where $u$ is the unique solution to the following equation that is posed outside the set $\mathcal{D}$ of resonators:
		\begin{equation} \label{eq_d2n_def}
			\left\{
			\begin{aligned}
				&-\Delta u-z u=0 \quad \text{in } \Omega, \\
				&u\big|_{+}=\phi \quad \text{on } \partial \Omega .
			\end{aligned}
			\right.
		\end{equation}
	\end{definition} 
	Following this definition, we note that \eqref{eq_full_pde_subwavelength_resonance} is equivalent to the following problem:
	\begin{equation} \label{eq_interior_pde_subwavelength_resonance}
		\left\{
		\begin{aligned}
			&-\Delta u-z u=0 \quad \text{in } \mathcal{D}, \\
			&\left.\frac{\partial u}{\partial \nu}\right|_{-}=\delta\mathcal{T}^{z}\big[u|_{\partial \mathcal{D}} \big] \quad \text{on } \partial \mathcal{D}.
		\end{aligned}
		\right.
	\end{equation}
	
	The basic properties of the map $\mathcal{T}^{z}$ are summarized below. They were proved for the case where $\mathcal{D}$ contains finitely many resonators in \cite[Proposition 3.1]{Feppon2024}, and for the periodic case in \cite[Proposition 2.2]{qiu2025nonlinear}. We now extend them to arbitrary configurations $\mathcal{D}$ satisfying \Cref{asmp_geometry_assumption}, and refer to  \Cref{app_d2n} for their proofs.
	\begin{proposition} \label{prop_d2n_map}
		There exists $z_{0}>0$, depending only on $\mathcal{D}$, such that $\mathcal{T}^{z}$ is well-defined on the disk $\mathbb{D}_0:=\{z\in\mathbb{C}:\, |z|< z_0\}$. 
		In addition, the operator-valued function
		$$
		z\mapsto \mathcal{T}^{z}\in \mathcal{B}(H^{1/2}(\partial \mathcal{D}),H^{-1/2}(\partial \mathcal{D}))
		$$
		is analytic in $\mathbb{D}_0$ and continuous on $\overline{\mathbb{D}_0}$.
	\end{proposition}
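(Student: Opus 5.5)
The plan is to treat the exterior problem \eqref{eq_d2n_def} as a perturbation of the Laplacian with Dirichlet condition on $\partial\mathcal{D}$. Let $-\Delta_{\Omega}^{D}$ be the Dirichlet Laplacian on the unbounded exterior domain $\Omega$, defined through the closed coercive form $\int_\Omega|\nabla u|^2$ on $H^1_0(\Omega)$. The key claim is a uniform Poincaré inequality: there is $\lambda_0>0$ such that
\[
\int_\Omega |\nabla u|^2 \ge \lambda_0 \int_\Omega |u|^2 \quad \text{for all } u\in H_0^1(\Omega).
\]
This is where \Cref{asmp_geometry_assumption} enters. The translated polygons $\bm v+Y^{(m)}$ partition $\mathbb{R}^2$, and there are finitely many shapes, each containing $K\ge1$ resonators at distance at least $c$ from the polygon boundary. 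On each cell $Y^{(m)}_{\bm v}\setminus\overline{\mathcal D}$, a function in $H^1$ vanishing on the resonator boundaries inside that cell satisfies a Poincaré inequality. Its constant depends only on the reference configuration $(Y^{(m)}$, the resonators inside it$)$, so there are only finitely many constants. This is a Friedrichs-type inequality with Dirichlet data on a portion of the boundary of positive capacity, and it holds regardless of what $u$ does on $\partial Y^{(m)}_{\bm v}$. Summing over all cells gives the global bound with $\lambda_0$ equal to the minimum of these finitely many constants. One subtlety is that finitely many $m$ does not automatically mean finitely many distinct cell configurations, since the resonators in $\bm v+Y^{(m)}$ could vary with $\bm v$. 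I would handle this either by noting that the configurations are translates of finitely many types placed with separation $c$, so a compactness argument gives a uniform Poincaré constant, or by using the fact that each cell contains at least one full resonator $D^{(n)}+\bm u$ at distance $c$ from the boundary. I expect this uniformity to be the main obstacle. The cleanest route is to extend $u$ by zero into the resonators and apply the Poincaré inequality on the whole polygon for functions vanishing on a subset of fixed positive measure, namely one resonator of area $1$. That constant depends only on the polygon $Y^{(m)}$ and the lower bound on the measure of the zero set, which gives uniformity.

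Set $z_0:=\lambda_0$. For $|z|<z_0$, the operator $-\Delta_\Omega^D-z$ is invertible on $L^2(\Omega)$, and in fact its form $a_z(u,v)=\int\nabla u\cdot\nabla\bar v - z\int u\bar v$ is bounded and invertible on $H^1_0(\Omega)$. For $|z|<z_0$ we have $|a_z(u,u)|\ge(1-|z|/\lambda_0)\|\nabla u\|^2$, which together with the Poincaré inequality gives coercivity on $H^1_0(\Omega)$ up to a rotation. To solve \eqref{eq_d2n_def}, choose a bounded right inverse of the trace, $E:H^{1/2}(\partial\mathcal D)\to H^1(\Omega)$. This can be built locally around each resonator using a cutoff supported within distance $c/2$, so it is uniformly bounded by the finite-type geometry. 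Then write $u=E\phi+w$ with $w\in H^1_0(\Omega)$ solving $a_z(w,v)=-a_z(E\phi,v)$. Lax--Milgram gives existence, uniqueness, and $\|u\|_{H^1(\Omega)}\le C\|\phi\|_{H^{1/2}}$.

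Next, define $\mathcal T^z\phi$ weakly by $\langle \mathcal T^z\phi,\psi\rangle=-a_z(u,E\psi)$, using the sign convention with $\nu$ pointing into $\Omega$ from $\mathcal D$. This gives $\mathcal T^z\in\mathcal B(H^{1/2},H^{-1/2})$. For analyticity, the map $z\mapsto A_z^{-1}$, where $A_z:H^1_0\to H^{-1}$ is the operator associated with $a_z$, is analytic because $A_z=A_0-zJ$ with $J$ the bounded embedding, and it is expanded as a Neumann series $A_z^{-1}=\sum z^k(A_0^{-1}J)^kA_0^{-1}$. This series converges in norm since $\|A_0^{-1}J\|\le1/\lambda_0$ in the energy norm, and it still converges uniformly for $|z|\le z_0'<\lambda_0$. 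To get continuity on the closed disk $\overline{\mathbb D_0}$, I would set $z_0$ to be strictly smaller, say $\lambda_0/2$, so that the closed disk stays inside the region of analyticity. Finally, $\mathcal T^z$ is a composition of $A_z^{-1}$ with $z$-affine bounded operators, so it is analytic in $\mathbb D_0$ and continuous on $\overline{\mathbb D_0}$.
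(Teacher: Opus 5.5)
Your proposal is correct and follows essentially the same route as the paper. Both arguments derive a uniform Poincaré inequality on $H^1_0(\Omega)$ cell by cell from the polygon partition, deduce invertibility of the exterior Dirichlet resolvent for small $|z|$, build a lifting from non-overlapping local extensions around each resonator, and obtain analyticity of $\mathcal{T}^z$ from that of the resolvent. Two of your details improve on the paper: the zero-extension argument (Poincaré on each polygon for functions vanishing on a set of area at least one) gives the uniform constant that the paper only sketches via the Newton--Leibniz formula, and taking $z_0=\lambda_0/2$ secures continuity on the closed disk, which the paper's choice of $z_0$ equal to the Poincaré constant itself glosses over.
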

	Furthermore, it is standard to check that \eqref{eq_interior_pde_subwavelength_resonance} admits the following variational characterization:
	\begin{equation} \label{eq_variational_characterization_subwavelength_resonance}
		\mathfrak{a}(u,v;z,\delta)=0,\quad \forall v\in H^1(\mathcal{D}),
	\end{equation}
	where the sesquilinear form $\mathfrak{a}$ is defined, for a spectral parameter $\lambda\in \mathbb{D}_0$, by
	\begin{equation} \label{eq_sesquilinear_form_subwavelength_resonance}
		\mathfrak{a}(u,v;\lambda,\delta)
		:=(\nabla u,\nabla v)_{(L^2(\mathcal{D}))^2} -\lambda (u,v)_{L^2(\mathcal{D})} -\delta (\mathcal{T}^{\lambda}[u],v)_{L^2(\partial \mathcal{D})} .
	\end{equation}
	
	We employ \eqref{eq_variational_characterization_subwavelength_resonance} to analyze the asymptotic behavior of the subwavelength resonance as $\delta\to 0$. The analysis is based on the fact that the relevant spectral information is encoded in the resolvent. For a spectral parameter $\lambda\in \mathbb{D}_0$, we denote by $\mathcal{R}(\lambda,\delta)$ the resolvent of \eqref{eq_variational_characterization_subwavelength_resonance}, whenever it exists, namely the bounded map
	\begin{equation*}
		\mathcal{R}=\mathcal{R}(\lambda,\delta):L^2(\mathcal{D})\to H^1(\mathcal{D})
	\end{equation*}
	such that, for $f\in L^2(\mathcal{D})$, we have
	\begin{equation*}
		\mathfrak{a}(\mathcal{R}f,v;\lambda,\delta)=(f,v)_{L^2(\mathcal{D})}, \quad \forall v\in H^1(\mathcal{D}).
	\end{equation*}
	In the subwavelength regime studied below, we specialize this notation to $\lambda=\delta z$ with $z$ ranging in a fixed compact set $\mathbb{K}\subset \mathbb{C}\setminus \mathbb{R}$. \Cref{app_inverse_of_form} proves that this resolvent is well-defined in that regime for sufficiently small $\delta$. 
	
	\subsection{Convergence of the resolvent to the capacitance operator}
	As discussed in the introduction, the subwavelength resonance is naturally linked to a discrete problem governed by the capacitance operator $\mathcal{C}$. When the resonators in each polygon $Y^{(m)}+\Sigma^{(m)}$ are numbered from $1$ to $K$, the capacitance operator 
	$\mathcal{C}:\ell^2(\Sigma)\to \ell^2(\Sigma)$ is defined by 
	\begin{equation} \label{eq_cap_operator_def}
		(\mathcal C \boldsymbol{\Phi})(\sigma'):=\sum_{\sigma\in\Sigma}\mathcal C(\sigma,\sigma')\Phi(\sigma),\quad
		\mathcal{C}(\sigma,\sigma'):=-\int_{\partial D_{\sigma'}}\mathcal{T}^{0}[\mathbbm{1}_{\partial D_{\sigma}}]\,\du \sigma(y),
	\end{equation}
	for $\boldsymbol{\Phi}=\{\Phi(\sigma)\}_{\sigma\in\Sigma}\in\ell^2(\Sigma)$. 
	The operator $\mathcal{C}$ is bounded, following from the exponential decay of its off-diagonal entries; see \Cref{apthm:exp_decay}.
	
	One of our main theoretical results in this paper is to rigorously establish the norm convergence (as $\delta \rightarrow 0$) of the subwavelength resolvent $\mathcal{R}(\delta z,\delta)$ associated with the continuous problem to the one associated with the discrete problem in the following sense.
	\begin{theorem} \label{thm_cont_to_disc_resolvent_converge}
		Let $\mathbb{K}\subset \mathbb{C}$ be a compact region such that $\mathbb{K}\cap \mathbb{R}=\emptyset$. Then, there exists $C,\delta_1>0$ depending only on the radius of $\mathbb{K}$ such that, for any $z\in \mathbb{K}$ and
		\begin{equation*}
			\delta<\delta_1\min\{1,\frac{1}{\|(\mathcal{C}-z)^{-1}\|_{\mathcal{B}(\ell^2(\Sigma))}}\},
		\end{equation*}
		it holds that
		\begin{equation} \label{eq_cont_to_disc_resolvent_converge}
			\big\|\delta \mathcal{R}(\delta z,\delta) - P^{\ast} (\mathcal{C}-z)^{-1} P\big\|_{\mathcal{B}(L^2(\mathcal{D}))} \le  C\delta \|(\mathcal{C}-z)^{-1}\|_{\mathcal{B}(\ell^2(\Sigma))}^2,
		\end{equation}		
		where $P:L^2(\mathcal{D})\to \ell^2(\Sigma)$ is the projection to the piecewise-constant component on each resonator
		\begin{equation} \label{eq_resonator_projection}
			(Pu)(\sigma):=\int_{D_{\sigma}}u\,\du x,\quad \forall \sigma\in \Sigma .
		\end{equation}
		Here and throughout, the superscript $\ast$ denotes the adjoint and $\mathcal{B}(X,Y)$ denotes the space of bounded linear operators from $X$ to $Y$. We write $\mathcal{B}(X):=\mathcal{B}(X,X)$.
	\end{theorem}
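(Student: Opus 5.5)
The plan is a Schur-complement (Feshbach) reduction of the sesquilinear form $\mathfrak{a}(\cdot,\cdot;\delta z,\delta)$ with respect to the orthogonal splitting $L^2(\mathcal{D}) = V_0\oplus V_0^\perp$. Here $V_0$ is the space of functions constant on each resonator $D_\sigma$. Because $|D_\sigma|=1$, the map $P^\ast:\ell^2(\Sigma)\to V_0$, $\boldsymbol{\Phi}\mapsto\sum_\sigma \Phi(\sigma)\mathbbm{1}_{D_\sigma}$, is an isometry and $P^\ast P$ is the orthogonal projection onto $V_0$. We write $u = P^\ast\boldsymbol{\Phi} + w$ with $w$ of zero mean on each resonator.

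\emph{The four blocks of $\mathfrak{a}$.}
\begin{itemize}
\item On $V_0\times V_0$ the gradient term vanishes, and by \eqref{eq_cap_operator_def},
\[
-\delta(\mathcal{T}^{\delta z}[P^\ast\boldsymbol{\Phi}],P^\ast\boldsymbol{\Psi}) = \delta(\mathcal{C}\boldsymbol{\Phi},\boldsymbol{\Psi}) + \mathcal{O}(\delta^2|z|),
\]
up to the convention for the complex conjugate. Including the $-\delta z(\cdot,\cdot)$ term, this block equals $\delta\big((\mathcal{C}-z)\boldsymbol{\Phi},\boldsymbol{\Psi}\big)$ plus an error of size $\mathcal{O}(\delta^2)$. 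This uses the Lipschitz bound $\|\mathcal{T}^{\lambda}-\mathcal{T}^0\|\le C|\lambda|$, which follows from the analyticity in Proposition~\ref{prop_d2n_map}.
\item On $V_0^\perp$ the form is uniformly coercive in $H^1$. Poincaré–Wirtinger on each resonator gives $\|\nabla w\|^2 \ge c\|w\|_{H^1}^2$, with $c$ uniform because there are finitely many resonator types. The remaining terms are $\mathcal{O}(\delta)$, using that $\mathcal{T}^{\lambda}$ is bounded $H^{1/2}\to H^{-1/2}$ with bounds uniform over the infinite configuration, which \Cref{app_d2n} provides.
\item The off-diagonal blocks come only from the $\delta$ and $\delta z$ terms, so they are $\mathcal{O}(\delta)$. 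In fact the $\delta z$ part vanishes by orthogonality, leaving $\delta(\mathcal{T}^{\delta z}\cdot,\cdot)$.
\end{itemize}

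\emph{Solving the system.} Given $f=P^\ast \mathbf{f}+f^\perp$, we first solve the $V_0^\perp$ equation for $w$ in terms of $\boldsymbol\Phi$ and $f^\perp$. Lax–Milgram gives
\[
\|w\|_{H^1}\le C(\|f^\perp\|+\delta\|\boldsymbol\Phi\|).
\]
Substituting into the $V_0$ equation gives
\[
\delta\big[(\mathcal{C}-z)+E(\delta)\big]\boldsymbol\Phi = \mathbf{f} + \mathcal{O}(\delta)f^\perp,
\]
where the Schur-complement error satisfies $\|E(\delta)\|_{\mathcal{B}(\ell^2)}\le C\delta$. The smallness hypothesis $\delta<\delta_1/\|(\mathcal{C}-z)^{-1}\|$ lets us invert by a Neumann series:
\[
\big\|\big(\mathcal{C}-z+E\big)^{-1}-(\mathcal{C}-z)^{-1}\big\|\le C\delta\,\|(\mathcal{C}-z)^{-1}\|^2 .
\]
This simultaneously shows that $\mathcal{R}(\delta z,\delta)$ exists. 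Multiplying by $\delta$, the leading term of $\delta u$ is $P^\ast(\mathcal{C}-z)^{-1}P f$.

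\emph{Remaining error terms.} The other contributions are
\begin{itemize}
\item $\delta w = \mathcal{O}(\delta)\big(\|f\|+\|\boldsymbol\Phi\|\big)$ with $\|\boldsymbol\Phi\|\lesssim\delta^{-1}\|(\mathcal{C}-z)^{-1}\|\|f\|$, giving an $\mathcal{O}(\delta\|(\mathcal{C}-z)^{-1}\|)$ bound;
\item the coupling of $f^\perp$ into $\boldsymbol\Phi$, which contributes $\mathcal{O}(\delta)\|(\mathcal{C}-z)^{-1}\|$ after multiplication by $\delta$.
\end{itemize}
Both are dominated by $C\delta\|(\mathcal{C}-z)^{-1}\|^2$, since $\|(\mathcal{C}-z)^{-1}\|\ge 1/\operatorname{dist}(z,\sigma(\mathcal{C}))$ is bounded below on compact $\mathbb{K}$ because $\mathcal{C}$ is bounded. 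The factor $\min\{1,\cdot\}$ in the hypothesis handles the case where $\|(\mathcal{C}-z)^{-1}\|$ is small.

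\emph{Main obstacle.} The hard part is uniformity over the infinite, non-periodic configuration. We need three uniform bounds: on $\|\mathcal{T}^\lambda\|$, on $\|\mathcal{T}^\lambda-\mathcal{T}^0\|$, and on the identification of the block $-(\mathcal{T}^0 P^\ast\cdot,P^\ast\cdot)$ with a bounded operator on $\ell^2(\Sigma)$ rather than just its matrix entries. I would take the first two from \Cref{app_d2n} and Proposition~\ref{prop_d2n_map}, uniformity coming from the finitely many polygon types in Assumption~\ref{asmp_geometry_assumption}. For the third, I would use that $\mathcal{T}^0$ is bounded on $H^{1/2}(\partial\mathcal{D})$ and that $\|P^\ast\boldsymbol\Phi\|_{H^{1/2}(\partial\mathcal{D})}\simeq\|\boldsymbol\Phi\|_{\ell^2}$, which again holds uniformly because there are finitely many resonator types.
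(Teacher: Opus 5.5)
Your proposal is correct. It uses the same ingredients as the paper's proof in Appendix~\ref{sec_resolvent_convergence}: Poincar\'e--Wirtinger coercivity of $\mathfrak{a}$ on $\operatorname{Ran}\mathcal{P}_\perp$, identification of the constant block with $\delta(\mathcal{C}-z)+\mathcal{O}(\delta^2)$, and $\mathcal{O}(\delta)$ coupling through $\mathcal{T}^{\delta z}$. What differs is how you organize them.

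The paper takes existence of $u$ from Proposition~\ref{prop_form_resolvent}. It then derives a priori estimates by testing with $v=\mathcal{P}_\perp u$ and $v=\mathbbm{1}_{D_\sigma}$ (Lemma~\ref{lem_regularity_sesq_solution}). It closes the coupled inequalities separately for $f\in\operatorname{Ran}\mathcal{P}_\perp$ and for $f\in\operatorname{Ran}\mathcal{P}$, and handles the $\mathcal{P}$--$\mathcal{P}$ block by bounding $(\mathcal{C}-z)(\delta Pu-(\mathcal{C}-z)^{-1}Pf)$ directly. You instead construct the solution, by Lax--Milgram on $V_0^\perp$ followed by a Neumann series for the Schur complement $\mathcal{C}-z+E(\delta)$.

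Your route gives existence and uniqueness of $\mathcal{R}(\delta z,\delta)$ under exactly the stated hypothesis. This matters because Proposition~\ref{prop_form_resolvent} needs $\delta<C(|z|)\,|\Im z|$. That condition does not follow from $\delta<\delta_1/\|(\mathcal{C}-z)^{-1}\|$, since $1/\|(\mathcal{C}-z)^{-1}\|=\operatorname{dist}(z,\sigma(\mathcal{C}))\ge|\Im z|$. Relying on it would therefore make $\delta_1$ depend on $\operatorname{dist}(\mathbb{K},\mathbb{R})$, not only on the radius of $\mathbb{K}$.

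Your remark that $\|(\mathcal{C}-z)^{-1}\|\ge(|z|+\|\mathcal{C}\|)^{-1}$ also makes explicit a step the paper's final summary leaves implicit. It is what lets the $\mathcal{O}(\delta)$ and $\mathcal{O}(\delta\|(\mathcal{C}-z)^{-1}\|)$ blocks be absorbed into $C\delta\|(\mathcal{C}-z)^{-1}\|^2$.

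In return, the paper's a priori formulation gives the individual block bounds with explicit constants, such as $2C_1\delta$ for $\delta\mathcal{P}_\perp\mathcal{R}\mathcal{P}_\perp$, without setting up the reduced operator.
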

	
	Some remarks are now in order. It is worth emphasizing that the result of Theorem \ref{thm_cont_to_disc_resolvent_converge} is the analog for classical waves of the tight-binding approximation of Schr\"odinger operators in the strong-field regime that has been rigorously established in \cite{Shapiro2022,Fefferman2018}, in the same resolvent-convergence framework; see also \cite{Dimassi1999,Nenciu1991,Panati2003,Ablowitz2020,fiorani} and the references therein. Besides the physical distinction that we work here with a classical wave system, the most remarkable feature of Theorem \ref{thm_cont_to_disc_resolvent_converge}, from both physical and mathematical perspectives, is that \textit{the resulting discrete operator $\mathcal{C}$ encodes interactions between resonators at arbitrary distances, rather than being restricted to the nearest-neighbor case}. The incorporation of arbitrary-distance interactions makes it possible to realize phenomena that cannot arise in systems with only nearest-neighbor interactions. For example, the Haldane model, which requires next-nearest-neighbor interactions, realizes the photonic Hall effect \cite{Haldane2008}.
	
	From a technical perspective, our resonator structure leads to a much simpler proof of resolvent convergence than the one in \cite{Shapiro2022,Fefferman2018}. This is because the eigenbasis corresponding to the subwavelength spectrum, namely the constant functions on each resonator, is naturally orthogonal due to the fact that the resonators are disjoint. In contrast, the eigenmodes associated with the low-energy spectrum of Schr\"odinger operators are expected to have exponentially small overlap, which requires a delicate analysis as in \cite{Shapiro2022,Fefferman2018}. To this end, the potential functions in \cite{Shapiro2022} are assumed to be radially symmetric, which guarantees the exponentially small overlap mentioned above; such a geometric assumption is not required for Theorem \ref{thm_cont_to_disc_resolvent_converge}. Nevertheless, we point out that the main idea of the proof is similar to that in \cite{Shapiro2022} and indeed inspired our proof of Theorem \ref{thm_cont_to_disc_resolvent_converge}. In fact, the key mechanism underlying the resolvent convergence is based on the following two facts:
	\begin{enumerate}
		\item[(i)] There exists a uniform separation between the low-lying spectrum of interest and the high-energy counterpart;
		\item[(ii)] The spectral projection of the low-lying spectrum admits a clear characterization.
	\end{enumerate}
	
	Finally, we point out that Theorem \ref{thm_cont_to_disc_resolvent_converge} generalizes the results in \cite{Ammari2023} on the discrete approximation of defect modes in the subwavelength regime, as illustrated in Example \ref{exmp_resolvent_converge_periodic}; see also \cite{Ammari2024b}. Moreover, the approximation of interface modes was studied numerically in the aforementioned works and is now rigorously justified by Theorem \ref{thm_cont_to_disc_resolvent_converge}, as will be discussed in Example \ref{exmp_resolvent_converge_interface}. We also note that the techniques in \cite{Ammari2023} require periodicity of the system, which is not needed in the proof of Theorem \ref{thm_cont_to_disc_resolvent_converge}.
	
	We outline the main idea of the proof of Theorem \ref{thm_cont_to_disc_resolvent_converge} in the next section and refer to \Cref{sec_resolvent_convergence} for details.

	\section{Main idea of the proof of Theorem \ref{thm_cont_to_disc_resolvent_converge} and its applications}
	\label{sec_heuristics}
	\subsection{Main idea of the proof of \Cref{thm_cont_to_disc_resolvent_converge}}
	Physically, the resolvent convergence shown in Theorem \ref{thm_cont_to_disc_resolvent_converge} is rooted in the mechanism that, in the high-contrast regime $\delta\ll 1$, the wave in \eqref{eq_full_pde_subwavelength_resonance} (that is, the solution \eqref{eq_full_pde_subwavelength_resonance}) with a frequency of order $\sqrt{\delta}$ is confined inside the resonator. As a consequence, the correlation between waves in different resonators is controlled by the Dirichlet-to-Neumann map. The leading-order term of the latter is of the monopole type, equaling the product of the average value of the wave inside the resonator and the point-to-point approximation of the Dirichlet-to-Neumann map, \emph{i.e.}, the kernel of the capacitance operator $\mathcal{C}$.
	
	Based on this intuition, the proof of Theorem \ref{thm_cont_to_disc_resolvent_converge} is conducted on two orthogonal components of $L^2(\mathcal{D})$: the functions that are constant in each connected component $\bm{u} + D^{(n)}$, corresponding to the wave in the subwavelength regime and its orthogonal complement. To be more precise, we recall that $ \mathcal{D} = \bigcup_{\sigma\in \Sigma} D_{\sigma}$ and introduce the following projections:
	\begin{equation*}
		\mathcal{P}:L^2(\mathcal{D})\to L^2(\mathcal{D}),\quad \mathcal{P} u:=\sum_{\sigma\in\Sigma}\Big(\int_{D_{\sigma}}u \, \du x\Big)\mathbbm{1}_{D_{\sigma}},\quad\quad
		\mathcal{P}_{\perp}:=\operatorname{Id}-\mathcal{P} .
	\end{equation*}
	With the normalization $|D_\sigma|=1$, the projection $P$ in \eqref{eq_resonator_projection} and the orthogonal projection $\mathcal P$ are related by $\mathcal P=P^\ast P$; equivalently, $P$ records the constant-mode coefficients and $\mathcal P$ lifts them back to piecewise-constant functions on $\mathcal D$. Then Theorem \ref{thm_cont_to_disc_resolvent_converge} follows from the following convergence: for $z\in \mathbb{K}$,
	\begin{equation} \label{eq_strong_resolvent_con_perp_to_perp}
		\delta \mathcal{P}_{\perp}\mathcal{R}(\delta z,\delta)\mathcal{P}_{\perp} \to 0,
	\end{equation}
	\begin{equation} \label{eq_strong_resolvent_con_perp_to_Pi}
		\delta \mathcal{P}_{\perp}\mathcal{R}(\delta z,\delta)\mathcal{P} \to 0,\quad
		\delta \mathcal{P}\mathcal{R}(\delta z,\delta)\mathcal{P}_{\perp} \to 0,
	\end{equation}
	and
	\begin{equation} \label{eq_strong_resolvent_con_Pi_to_Pi}
		\mathcal{P}\Big(\delta \mathcal{R}(\delta z,\delta) - P^{\ast} (\mathcal{C}-z)^{-1} P\Big)\mathcal{P} \to 0,
	\end{equation}
	in the operator norm. Here, we illustrate the main idea of the diagonal convergence \eqref{eq_strong_resolvent_con_perp_to_perp} and \eqref{eq_strong_resolvent_con_Pi_to_Pi}, which demonstrate the fundamental mechanism underlying Theorem \ref{thm_cont_to_disc_resolvent_converge}.
	
	First, we consider \eqref{eq_strong_resolvent_con_perp_to_perp}. The key idea underlying this convergence is that, since $\mathcal{P}_{\perp}$ is `approximately' the spectral projection to the high-frequency spectrum associated with the problem \eqref{eq_full_pde_subwavelength_resonance}, which is isolated from the subwavelength spectrum by an $\mathcal{O}(1)$ distance (see Figure \ref{fig_decomposition_spectrum_sub_vs_high} for an illustration), the resolvent $\mathcal{P}_{\perp}\mathcal{R}(\delta z,\delta)\mathcal{P}_{\perp}$ is uniformly bounded as $\delta\to 0$:
	\begin{equation} \label{eq_strong_resolvent_con_main_idea_1}
		\|\mathcal{P}_{\perp}\mathcal{R}(\delta z,\delta)\mathcal{P}_{\perp}\|_{\mathcal{B}(L^2(\mathcal{D}))}\lesssim 1,
	\end{equation}
	which implies \eqref{eq_strong_resolvent_con_perp_to_perp}. Here, the notation $A \lesssim B$ means that there exists a constant $C$ independent of $\delta$ such that $A \leq C B$.
	
	The key idea to justify \eqref{eq_strong_resolvent_con_main_idea_1} lies in the following coercivity estimate:
	\begin{equation} \label{eq_strong_resolvent_con_main_idea_2}
		\mathfrak{a}(\mathcal{P}_{\perp}u,\mathcal{P}_{\perp}u;\delta z,\delta)\geq c_{*}\|\mathcal{P}_{\perp}u\|_{H^1(\mathcal{D})}^2,\quad \forall u\in H^1(\mathcal{D})
	\end{equation}
	for small $\delta$, with $c_*>0$ independent of $\delta$. The validity of \eqref{eq_strong_resolvent_con_main_idea_2} can be seen from the definition of the sesquilinear form $\mathfrak{a}$ in \eqref{eq_sesquilinear_form_subwavelength_resonance} by the Poincaré–Wirtinger inequality; we provide a detailed proof in \Cref{sec_regularity_estimate}. Nevertheless, we note that the above heuristics are not rigorous since the Dirichlet-to-Neumann map does not commute with the projection $\mathcal{P}$ in general, that is, 
	\begin{equation} \label{eq_strong_resolvent_con_main_idea_7}
		(\mathcal{T}^{\delta z}\mathcal{P} u,\mathcal{P}_{\perp} u)_{L^2(\partial \mathcal{D})}\neq 0,
	\end{equation}
	which prevents a direct implication of \eqref{eq_strong_resolvent_con_main_idea_1} from \eqref{eq_strong_resolvent_con_main_idea_2}. We solve this problem by a Lyapunov-Schmidt-type argument in \Cref{sec_resolvent_convergence_proof} (Step 1). That is, we bound $\|\mathcal{P} u\|$ emerging from \eqref{eq_strong_resolvent_con_main_idea_7} in terms of $\|\mathcal{P}_{\perp} u\|$ using the resolvent of the capacitance operator, which then leads to a closed estimate in $\|\mathcal{P}_{\perp} u\|$ and guarantees \eqref{eq_strong_resolvent_con_main_idea_1}. See \Cref{sec_resolvent_convergence_proof} for more details.
	
	\begin{figure}[htbp]
		\centering
		\begin{tikzpicture}[line join=round, line cap=round, thick, scale=0.6]
			\draw (0,0)--(16,0);
			\draw[line width=0.7mm] (0,-0.5)--(0,0.5);
			\node[left,scale=0.8] at (0,0) {$0$};
			
			\draw[line width=0.9mm, red] (0,0)--(3,0);
			\draw[decorate,decoration={brace,mirror}] (0,-1) -- (3,-1);
			\draw[decorate,decoration={brace}] (0,1) -- (3,1);
			\node[above,scale=0.8] at (1.5,1.1) {$\mathcal{O}(\delta)$};
			\draw[decorate,decoration={brace,mirror}] (0,-1) -- (3,-1);
			\node[below,scale=0.8] at (1.5,-1.5) {subwavelength spectrum};
			
			\draw[decorate,decoration={brace}] (0,2) -- (8,2);
			\node[above,scale=0.8] at (4,2.1) {$\mathcal{O}(1)$};
			\draw[line width=0.9mm, blue] (8,0)--(16,0);
			\draw[decorate,decoration={brace,mirror}] (8,-1) -- (16,-1);
			\node[below,scale=0.8] at (12,-1.5) {high-frequency spectrum};
		\end{tikzpicture}
		\caption{Distribution of the spectrum associated with the problem \eqref{eq_full_pde_subwavelength_resonance}. The red region refers to the subwavelength spectrum with $z=\mathcal{O}(\delta)$, with the associated spectral projection being `approximately' $\mathcal{P}$, while the blue region represents the high-frequency spectrum.}
		\label{fig_decomposition_spectrum_sub_vs_high}
	\end{figure}
	
	Next, we consider \eqref{eq_strong_resolvent_con_Pi_to_Pi}. We need to prove that the following convergence holds uniformly in the norm of $f$:
	\begin{equation} \label{eq_strong_resolvent_con_main_idea_6}
		\delta\mathcal{P} u-\mathcal{P} P^{\ast} (\mathcal{C}-z)^{-1}Pf\to 0,
	\end{equation}
	where $f\in \operatorname{Ran}\mathcal{P}$ and $u$ solves
	\begin{equation} \label{eq_strong_resolvent_con_main_idea_4}
		\mathfrak{a}(u,v;\delta z,\delta)=(\nabla u,\nabla v)_{(L^2(\mathcal{D}))^2} -\delta z(u,v)_{L^2(\mathcal{D})} -\delta (\mathcal{T}^{\delta z}[u],v)_{L^2(\partial \mathcal{D})}= (f,v)_{L^2(\mathcal{D})}.
	\end{equation}
	The function $u$ is expected to be approximately constant inside each resonator, just as the source $f$. Hence, taking $v=\mathbbm{1}_{D_{\sigma}}$ in \eqref{eq_strong_resolvent_con_main_idea_4}, replacing $u$ with $\mathcal{P} u$, and passing formally from $\mathcal{T}^{\delta z}$ to $\mathcal{T}^{0}$, we see that
	\begin{equation*}
		-\delta z\int_{D_{\sigma}} u\, \du x+\sum_{\sigma'\in\Sigma}\mathcal{C}(\sigma,\sigma')\big(\int_{D_{\sigma'}} u\, \du x\big)\approx Pf(\sigma),\quad \forall \sigma\in \Sigma .
	\end{equation*}
	Equivalently,
	\begin{equation*}
		\delta(\mathcal{C}-z)Pu\approx Pf.
	\end{equation*}
	Since $z\notin \mathbb{R}$, the invertibility of $\mathcal{C}-z$ then implies that $\delta Pu\approx (\mathcal{C}-z)^{-1}Pf$. This illustrates the validity of \eqref{eq_strong_resolvent_con_main_idea_6}. All these approximate arguments are rigorously formulated in \Cref{sec_resolvent_convergence_proof}.
	
	\subsection{Implication of \Cref{thm_cont_to_disc_resolvent_converge}}
	The resolvent convergence in Theorem \ref{thm_cont_to_disc_resolvent_converge} rigorously justifies the discrete approximation of defect modes and guided modes in a straight and bent waveguides. 
	In both cases, a key feature is that the resolvent of $\mathcal{C}$ can be obtained via the limiting absorption principle. This is captured by the following condition.
	\begin{definition} \label{asmp_resolvent_boundedness}
		The interval $\mathcal{I}\subset \mathbb{R}$ is said to be $\mathcal{C}$-admissible if the following conditions hold:
		\begin{enumerate}
			\item[(i)] for any $\epsilon_0>0$,
			\begin{equation*}
				\sup_{z:\Re z\in \mathcal{I},0<\Im z <\epsilon_0}\|(\mathcal{C}-z)^{-1}\|_{\mathcal{B}(\ell^1(\Sigma), \ell^{\infty}(\Sigma))}<\infty;
			\end{equation*}
			\item[(ii)] for $z\in \mathcal{I}$, $(\mathcal{C}-z-\iu\epsilon)^{-1}\in \mathcal{B}(\ell^1(\Sigma), \ell^{\infty}(\Sigma))$ converges in the operator norm as $\epsilon\to 0^+$:
			\begin{equation*}
				(\mathcal{C}-z-\iu0^+)^{-1}:=
				\lim_{\epsilon\to 0^+}(\mathcal{C}-z-\iu\epsilon)^{-1} \in \mathcal{B}(\ell^1(\Sigma), \ell^{\infty}(\Sigma)).
			\end{equation*}
		\end{enumerate}
	\end{definition}
	
	\begin{example}[Approximation of defect modes] \label{exmp_resolvent_converge_periodic}
		We consider a more general setting for defect modes. Let $\Lambda = \mathbb{Z}^2$ and consider materials with compact defects, i.e., there exists $K_{\rm def}>0$ such that
		\begin{equation*}
			\left\{\begin{aligned}
				\Lambda^{(1)}&=\mathbb{Z}^2\backslash \{-K_{\rm def},-K_{\rm def}+1,\ldots,K_{\rm def}\}^2,\\
				|\Lambda^{(i)}|&<\infty,\quad i\in \{2,3,\ldots,N_{\mathrm{type}}\},
			\end{aligned}\right.			
		\end{equation*}
		Then, $\mathcal{C}$ may possess an in-gap defect mode, as illustrated in Figure \ref{fig_defect_eigenvalue}. In that case, the intervals $\mathcal{I}_1$ and $\mathcal{I}_2$, which separate the in-gap eigenvalue from the essential spectrum, are admissible in the sense of Definition \ref{asmp_resolvent_boundedness}. As a consequence, Theorem \ref{thm_cont_to_disc_resolvent_converge} applies to all $z\in\Gamma$, where $\Gamma$ is a closed contour enclosing the defect eigenvalue. By the Riesz projection, the following convergence holds in $W^{1,\infty}(\mathcal{D})$ for any $f\in W^{1,1}(\mathcal{D})$ as $\delta\to 0$:
		\begin{align*}
			&\delta\mathbbm{1}_{\{\delta\lambda_{\text{defect}}\}}(\mathcal{L}_{\delta})f \\
			&\quad= -\frac{1}{2\pi \iu}\oint_{\Gamma}\delta\mathcal{R}(\delta z,\delta)dz
			\to -\frac{1}{2\pi \iu}\oint_{\Gamma}P^{\ast}(\mathcal{C}-z)^{-1}Pfdz =P^{\ast} \mathbbm{1}_{\{\lambda_{\text{defect}}\}}(\mathcal{C})Pf.
		\end{align*}
		Here, $W^{1,\infty}(\mathcal{D})$ (resp.  $W^{1,1}(\mathcal{D})$)  is the set of functions with their weak derivatives in $L^\infty(\mathcal{D})$ (resp. $L^1(\mathcal{D})$), and $\Gamma$ is a contour in the rescaled z-plane; the corresponding contour for $\lambda$ is $\delta\Gamma$. This implies that if the discrete operator $\mathcal{C}$ possesses in-gap eigenmodes, then the same is true for the continuous problem \eqref{eq_full_pde_subwavelength_resonance}. In other words, Theorem \ref{thm_cont_to_disc_resolvent_converge} justifies the discrete approximation of the defect modes.
		
		\begin{figure}[htbp]
			\centering
			\begin{tikzpicture}[line join=round, line cap=round, thick, scale=0.8]
				\draw (-8,0)--(8,0);
				
				\node[above,scale=1] at (0,0.2) {$\lambda_{\text{defect}}$};
				\draw[fill=red] plot [smooth] (0,0) ellipse (0.1cm and 0.1cm);
				
				\node[scale=1.2] at (-3,0) {$)$};
				\node[scale=1.2] at (3,0) {$($};
				\draw[line width=0.7mm, blue] (-8,0)--(-3,0);
				\draw[line width=0.7mm, blue] (8,0)--(3,0);
				\draw[decorate,decoration={brace,mirror}] (-8,-1) -- (-3,-1);
				\draw[decorate,decoration={brace,mirror}] (3,-1) -- (8,-1);
				\node[below,scale=0.8] at (-5.5,-1.5) {lower-gap spectrum};
				\node[below,scale=0.8] at (5.5,-1.5) {upper-gap spectrum};
				
				\draw[fill=blue,opacity=0.3] (-2,0.2) rectangle (-1,-0.2);
				\node[below,scale=0.8] at (-2,-0.3) {$\mathcal{I}_1$};
				\draw[fill=blue,opacity=0.3] (1,0.2) rectangle (2,-0.2);
				\node[below,scale=0.8] at (2,-0.3) {$\mathcal{I}_2$};
				
				\draw[dashed,thick] plot [smooth] (0,0) ellipse (1.5cm and 1.8cm);
				\node[right,scale=1] at (1,1.5) {$\Gamma$};
			\end{tikzpicture}
			\vspace{0.2cm}
			\caption{Defect eigenvalue.}
			\label{fig_defect_eigenvalue}
		\end{figure}
		
	\end{example}
	
	\begin{example}[Approximation of interface modes] \label{exmp_resolvent_converge_interface}
		In \cite{Ammari2026}, it is proved that $\mathcal{C}$ possesses in-gap interface modes whose eigenvalues lie in the bulk spectral gap $\mathcal{I}_{\varepsilon}$. The dispersion curves, plotted against the momentum parallel to the interface, are depicted in Figure \ref{fig_interface_eigenvalue_curve}. Importantly, following the analysis of \cite{Fliss2016}, it can be shown that the limiting absorption principle holds everywhere in the essential spectral gap \emph{except where the dispersion curve is locally flat}. To be more precise, suppose that there are two branches of the dispersion curves in the essential gap, as depicted in Figure \ref{fig_interface_eigenvalue_curve}, and that they have nonvanishing derivatives everywhere except at the local extrema located at zero parallel momentum. Denote the energy levels associated with these two extreme points by $z_{upp}$ and $z_{low}$. Then, any closed interval $\mathcal{I}\subset \mathcal{I}_{\varepsilon}$ such that $\mathcal{I}\cap \{z_{upp},z_{low}\}=\emptyset$ is $\mathcal{C}$-admissible (\emph{e.g.}, $\mathcal{I}_1$ and $\mathcal{I}_2$ in Figure \ref{fig_interface_eigenvalue_admissible}). Moreover, the limiting resolvent admits the following far-field asymptotics along the interface:
		\begin{equation} \label{eq_interface_curve_limiting_absorption_left}
			\lim_{\bm{n}\cdot \mathring{\bm{v}}_{_2}\to -\infty}\Big[\big((\mathcal{C}-z-\iu0^+)^{-1}f\big)(\bm{n})-\iu\frac{\big(f(\cdot),u_{inter}(\cdot;\kappa_{\parallel}^{left})\big)_{\ell^2(\Sigma)}}{|\alpha(\kappa_{\parallel}^{left})|}u_{inter}(\bm{n};\kappa_{\parallel}^{left}) \Big] =0
		\end{equation}
		and
		\begin{equation} \label{eq_interface_curve_limiting_absorption_right}
			\lim_{\bm{n}\cdot \mathring{\bm{v}}_{_2}\to \infty}\Big[\big((\mathcal{C}-z-\iu0^+)^{-1}f\big)(\bm{n})-\iu\frac{\big(f(\cdot),u_{inter}(\cdot;\kappa_{\parallel}^{right})\big)_{\ell^2(\Sigma)}}{|\alpha(\kappa_{\parallel}^{right})|}u_{inter}(\bm{n};\kappa_{\parallel}^{right}) \Big] =0.
		\end{equation}
		Here, $\kappa_{\parallel}^{left/right}$ are the parallel momenta at which the energy level $z$ intersects the dispersion curves; see Figure \ref{fig_interface_eigenvalue_curve}. The denominator $\alpha(\kappa_{\parallel}^{left/right})$ is the slope of the dispersion curve at the corresponding intersection point. Moreover, $u_{inter}(\cdot;\kappa_{\parallel}^{left/right})$ denotes the interface mode at that point. 
		
		Theorem \ref{thm_cont_to_disc_resolvent_converge} then implies that, for any $f\in W^{1,1}(\mathcal{D})$ and $\epsilon>0$, if we set $\delta=\mathcal{O}(\epsilon^2)$, then
		\begin{equation*}
			\|\delta \mathcal{R}(\delta (z+\iu\epsilon),\delta)f-\widetilde{\mathcal{R}}(z)f \|_{W^{1,\infty}(\mathcal{D})} =\mathcal{O}(\epsilon),
		\end{equation*}
		where $\widetilde{\mathcal{R}}(z)f:=P^{\ast}(\mathcal{C}-z-\iu0^+)^{-1}Pf$ admits asymptotics similar to those in \eqref{eq_interface_curve_limiting_absorption_left}-\eqref{eq_interface_curve_limiting_absorption_right}. This implies that the continuous problem \eqref{eq_full_pde_subwavelength_resonance} possesses approximate interface modes. In other words, Theorem \ref{thm_cont_to_disc_resolvent_converge} justifies the discrete approximation of the interface modes.
		
		\begin{figure}[htbp]
			\centering
			\subfigure[Dispersion curve ]{
				\label{fig_interface_eigenvalue_curve}
				\begin{tikzpicture}[scale=0.5]
					
					\draw[thick,->] (-8,0)--(8,0);
					\draw[thick] (0,0.3)--(0,-0.3);
					\node[below] at (0,0) {$0$};
					\node[right] at (8.2,0) {$\kappa_{\parallel}$};
					
					\draw[white,line width=0pt, name path=one] plot [smooth] coordinates {(-8,9) (-6,8.8) (-2,8.2) (0,8) (2,8.2) (6,8.8) (8,9)};
					\draw[white,line width=0pt, name path=two] plot [smooth] coordinates {(-8,3) (-6,3.2) (-2,3.8) (0,4) (2,3.8) (6,3.2) (8,3)};
					\draw[white,line width=0pt, name path=three] (-8,9)--(8,9);
					\draw[white,line width=0pt, name path=four] (-8,3)--(8,3);
					\tikzfillbetween[
					of=one and three,split
					] {pattern=north west lines};
					\tikzfillbetween[
					of=two and four,split
					] {pattern=north west lines};
					\fill[pattern=north west lines] (-8,12) rectangle (8,9);
					\fill[pattern=north west lines] (-8,3) rectangle (8,1);
					\draw[red,line width=2pt] plot [smooth] coordinates {(-4,3.8) (-2.5,4) (-1.5,4.5) (0,5.5) (1.5,4.5) (2.5,4) (4,3.8)};
					\draw[red,line width=2pt] plot [smooth] coordinates {(4,8.4) (2.5,8) (1.5,7.4) (0,6.5) (-1.5,7.4) (-2.5,8) (-4,8.4)};
					\draw[dashed] (-8,6.5)--(8,6.5);
					\node[left] at (-8,6.5) {$z_{upp}$};
					\draw[dashed] (-8,5.5)--(8,5.5);
					\node[left] at (-8,5.5) {$z_{low}$};
					\draw[dashed] (-8,7.5)--(8,7.5);
					\node[left] at (-8,7.5) {$z$};
					\node[below,scale=0.8] at (-2.3,7.7) {$(\kappa_{\parallel}^{left},z)$};
					\draw[fill=blue] plot [smooth] (-1.7,7.5) ellipse (0.2cm and 0.2cm);
					\node[below,scale=0.8] at (2.5,7.7) {$(\kappa_{\parallel}^{right},z)$};
					\draw[fill=blue] plot [smooth] (1.7,7.5) ellipse (0.2cm and 0.2cm);
				\end{tikzpicture}
			}
			\subfigure[Distribution of the spectrum and admissible sets]{
				\label{fig_interface_eigenvalue_admissible}
				\begin{tikzpicture}[line join=round, line cap=round, thick, scale=0.6]
					\draw (-8,0)--(8,0);
					
					\draw[line width=0.9mm, red] (3,0)--(1,0);
					\draw[line width=0.9mm, red] (-3,0)--(-1,0);
					\draw[decorate,decoration={brace}] (1,0.5) -- (3,0.5);
					\draw[decorate,decoration={brace}] (-3,0.5) -- (-1,0.5);
					\draw[decorate,decoration={brace}] (-2,1) -- (2,1);
					\node[above,scale=0.8] at (0,1.5) {interface eigenvalue};
					\draw[decorate,decoration={brace,mirror}] (-3,-1) -- (3,-1);
					\node[below,scale=0.8] at (0,-1.5) {bulk spectral gap $\mathcal{I}_{\varepsilon}$};
					
					\node[scale=1.2] at (-3,0) {$)$};
					\node[scale=1.2] at (3,0) {$($};
					\draw[line width=0.7mm, blue] (-8,0)--(-3,0);
					\draw[line width=0.7mm, blue] (8,0)--(3,0);
					\draw[decorate,decoration={brace,mirror}] (-8,-1) -- (-3,-1);
					\draw[decorate,decoration={brace,mirror}] (3,-1) -- (8,-1);
					\node[below,scale=0.8] at (-5.5,-1.5) {lower-gap spectrum};
					\node[below,scale=0.8] at (5.5,-1.5) {upper-gap spectrum};
					
					\draw[fill=blue,opacity=0.3] (-2.5,0.2) rectangle (-1.5,-0.2);
					\node[below,scale=0.8] at (-2,-0.3) {$\mathcal{I}_1$};
					\draw[fill=blue,opacity=0.3] (1.5,0.2) rectangle (2.5,-0.2);
					\node[below,scale=0.8] at (2,-0.3) {$\mathcal{I}_2$};
				\end{tikzpicture}
			}
			\caption{(a) Dispersion curves associated with the interface modes. The shaded region represents the bulk spectrum. The limiting absorption principle applies everywhere in the bulk spectral gap except at the energy levels where the dispersion curves of the interface eigenvalues are locally flat ($z_{upp}$ and $z_{low}$ in the figure). (b) Projection of the spectrum shown in (a) onto the real line: the blue region denotes the bulk spectrum and the red region denotes the in-gap interface eigenvalues. The admissible sets $\mathcal{I}_1$ and $\mathcal{I}_2$ denote the spectral regions where the limiting absorption principle applies.}
			\label{fig_interface_eigenvalue+admissible_set}
		\end{figure}
	\end{example}
	
	\begin{remark}
		The idea in Example \ref{exmp_resolvent_converge_interface} also works for the curved-interface model, except that (i) \eqref{asmp_resolvent_boundedness} may not be easy to verify and (ii) the explicit asymptotics \eqref{eq_interface_curve_limiting_absorption_left}-\eqref{eq_interface_curve_limiting_absorption_right} may not be available because of the loss of periodicity along the interface.
	\end{remark}
	
	\section{Patch approximation of the capacitance operator}\label{sec:PatchApprox}
	To efficiently calculate the spectrum of the capacitance operator $\mathcal{C}$, we construct and justify its patch approximation in this section.
	\subsection{Domain decomposition by patches}
	
	We first decompose the domain $\Omega$ into translates of finitely many patches. To do so, we introduce some definitions.
	\begin{definition}
		A \emph{patch} of the domain $\Omega$ is a bounded connected set $\mathsf{P}\subset \Omega$ of the form
		\begin{equation}
			\mathsf{P} := \bigcup_{\bm{v}\in\Pi}\big( Y_{\bm{v}}\setminus \overline{\mathcal{D}} \big),
		\end{equation}
		where $\Pi \subset \Lambda$ is a finite index set. This use of $\mathsf{P}$ is unrelated to the projection operator $P$.
		For a given $\bm{v}_{_0}\in \Lambda$, the \emph{regularized patch} of size $M$ is defined by
		\begin{equation}
			\widetilde{\mathsf{P}}_{\bm{v}_{_0}}(M) := \bigcup_{\bm{v}\in \widetilde{\Pi}_{\bm{v}_{_0}}(M)} Y_{\bm{v}}\setminus \overline{\mathcal{D}},
		\end{equation}
		where the index set $\widetilde{\Pi}_{\bm{v}_{_0}}(M)\subset \Lambda$ is given by 
		\begin{equation}\label{eqn:square_index}
			\widetilde{\Pi}_{\bm{v}_{_0}}(M) := \{ m\mathring{\bm{v}}_{_1}+n\mathring{\bm{v}}_{_2}: |m-m_0|\le M,|n-n_0|\le M \},\quad \bm{v}_{_0} = m_0\mathring{\bm{v}}_{_1}+n_0\mathring{\bm{v}}_{_2}.
		\end{equation}
	\end{definition}
	For a given patch $\mathsf{P}$ and fixed $\bm{v}_{_0}\in \Pi$, we set 
	\[	M_{\mathsf{P}}(\bm{v}_{_0}):=
	\max\Big\{ M\in\mathbb{N}:	 \widetilde{\mathsf{P}}_{\bm{v}_{_0}}(M)\subset \mathsf{P}\Big\},\]
	and define the size of a patch as follows.
	\begin{definition}
		The \emph{size} of a patch $\mathsf{P}$ is defined by
		\begin{equation}		
			M_{\mathsf{P}}:= \max_{\bm{v}\in\Pi} M_{\mathsf{P}}(\bm{v}).	
		\end{equation}
		Moreover, we say that $\bm{v}_{\ast}\in\Pi$ is a \emph{center} of $\mathsf{P}$ if
		\[
		M_{\mathsf{P}}(\bm{v}_{\ast})=M_{\mathsf{P}}.
		\]
	\end{definition}
	Note that for a given patch $\mathsf{P}$ of size $M$, there exists at least one center. For clarity and simplicity, we assume that the patches in the following part are properly chosen so that they have a unique center at $\bm{v}_{_i}\in\Lambda$.
	
	Furthermore, we assume that the patch $\mathsf{P}$ can be covered by a regularized patch of size $2M$ centered at $\bm{v}_{\ast}$:
	\begin{equation}\label{eqn:assump_cover}
		\mathsf{P}\subset \widetilde{\mathsf{P}}_{\bm{v}_{\ast}}(2M).
	\end{equation} 
	
	An illustration of a patch $\mathsf{P}$ of size $1$ that satisfies the above assumptions is given in \Cref{fig:patch}.
	\begin{figure}[htbp] 
		\centering
		\includegraphics[width = 0.5\textwidth]{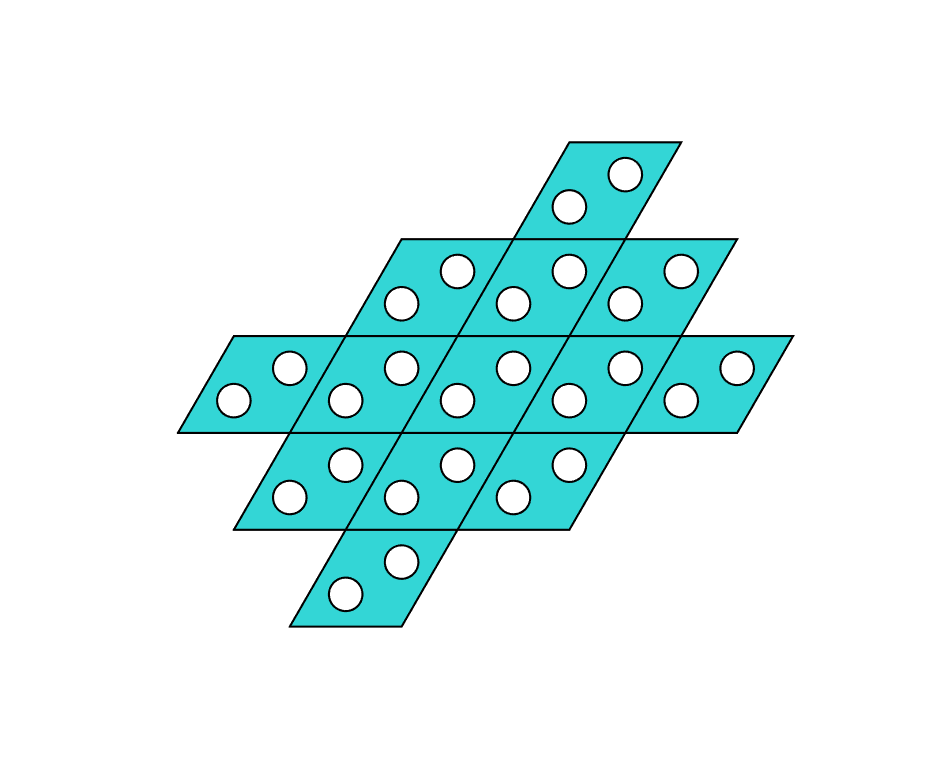}
		\caption{An illustration of a patch of size $1$.}
		\label{fig:patch}
	\end{figure}
	\begin{definition}
		For fixed $M\in\mathbb{N}$, we say that $\Omega$ is \emph{covered by finitely many patches of size $M$} if there exist $N\in\mathbb{N}$ and patches $\{ \mathsf{P}_{i} \}_{i=1}^{N}$ of size $M$, such that, for every $\bm{v}\in\Lambda$, there exist unique $1\le j\le N$ and $\bm{w}\in\Lambda$ with
		\[
		Y_{\bm{v}}\setminus \overline{\mathcal{D}} \subset \bm{w}+\mathsf{P}_j = \bigcup_{\bm{v}\in\Pi_{j}} Y_{\bm{v}}\setminus \overline{\mathcal{D}} \subset \Omega,
		\]
		and $\bm{v}$ is the center of the translated patch $\bm{w}+\mathsf{P}_j$. Here, for $1\leq j \leq N$, $\Pi_{j} \subset \Lambda$ is a finite index set.  
	\end{definition}
	In what follows, we assume that the domain $\Omega$ is covered by finitely many patches $\{ \mathsf{P}_{i} \}_{i=1}^{N}$ of size $M$. A domain $\Omega $ covered by finitely many patches of size $1$ is shown in \Cref{fig:finite_patch}.
	\begin{figure}[htbp] 
		\centering
		\includegraphics[width = 0.45\textwidth]{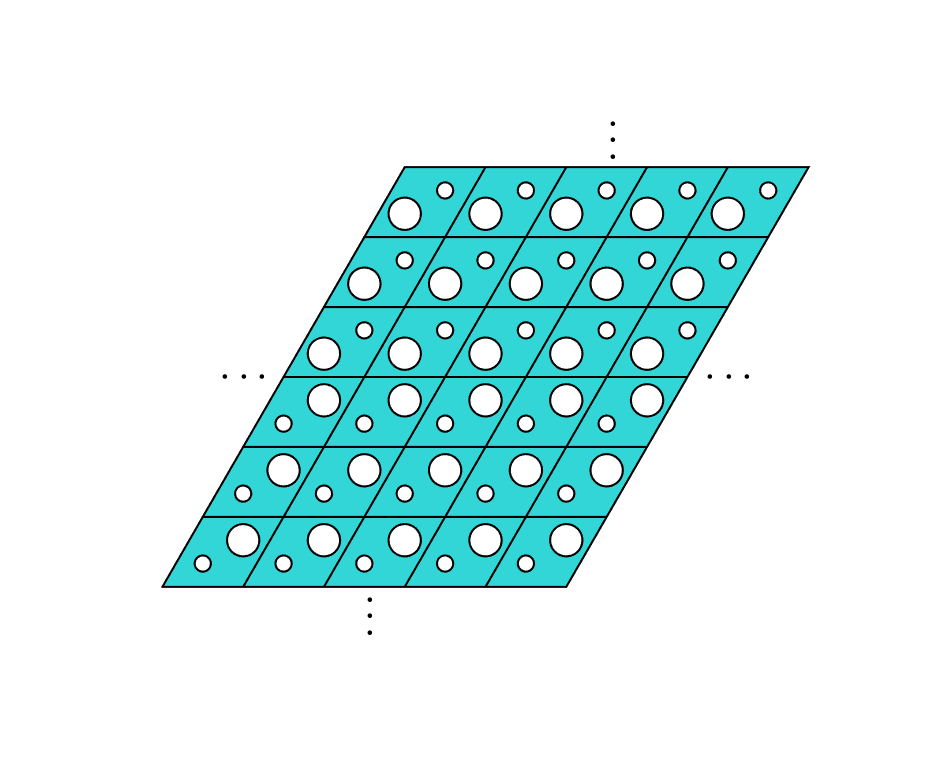}
		\hspace{-0.4cm}
		\includegraphics[width = 0.45\textwidth]{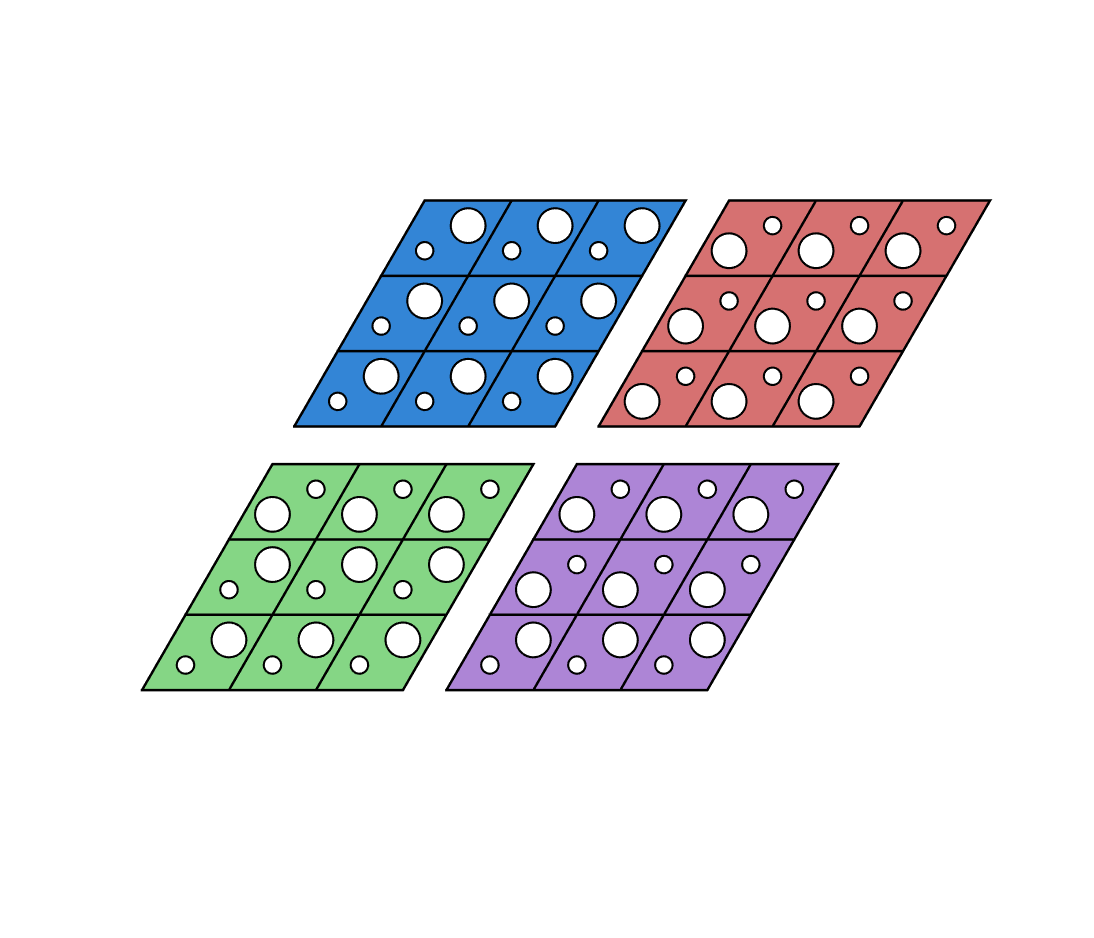}
		\caption{Left: An example of a finitely patched domain $\Omega$. Right: Four patches of size $1$ that correspond to the domain $\Omega$.}
		\label{fig:finite_patch}
	\end{figure}
	Unlike the periodic setting commonly studied via the Floquet--Bloch theory, we do not assume global periodicity of the domain $\Omega$.

	\subsection{Patch approximation of the capacitance operator}
	
	We now define the patch approximation of the capacitance operator. 
	From the definition of the Dirichlet-to-Neumann map $\mathcal{T}^0$, the capacitance operator introduced in \eqref{eq_cap_operator_def} can be equivalently defined by 
	\[ \mathcal{C}(\sigma,\sigma') = -\int_{\partial D_{\sigma'}} \frac{\partial V_{\sigma}}{\partial \nu}(y)\du \sigma(y), \quad \text{ for any } \sigma, \sigma' \in \Sigma, \]
	where $V_{\sigma}$ is the solution to the following boundary value problem:
	\begin{gather}\label{eqn:HarmonicFunc_full}
		\left\{\begin{aligned}
			\Delta V_{\sigma} &= 0 ,\quad x\in\Omega,\\
			V_{\sigma} & = 1,\quad x\in \partial D_{\sigma},\\
			V_{\sigma} & = 0,\quad x\in \partial \Omega \setminus \partial D_{\sigma}.
		\end{aligned}\right.
	\end{gather}
	
	Instead of solving the exterior problem on the whole domain $\Omega$, we solve local boundary value problems on the finitely many patches $\{ \mathsf{P}_{j} \}_{j=1}^N$ and then translate the solution to derive the corresponding local solutions for all $\sigma\in\Sigma$.
	\begin{definition}[Patch approximation of the capacitance operator]
		Assume that $\Omega$ is covered by finitely many patches $\{\mathsf{P}_j\}_{j=1}^{N}$ of size $M$. For each patch $\mathsf{P}_j$, let $\bm{v}_{_j}$ be its center. For $1\le j\le N$ and $1\le k\le K$, set $\sigma_j^k:=(\bm{v}_{_j},k)\in\Sigma$ and let $\mathcal{U}_{j,k}$ be the solution of
		\begin{gather}\label{eqn:Patch:primitive}
			\left\{\begin{aligned}
				\Delta\mathcal{U}_{j,k} &= 0,\quad x\in \mathsf{P}_j,\\
				\mathcal{U}_{j,k} &= 1,\quad x\in\partial D_{\sigma_j^k},\\
				\mathcal{U}_{j,k} &= 0,\quad x\in \partial \mathsf{P}_j\setminus \partial D_{\sigma_j^k}.
			\end{aligned}\right.
		\end{gather}
		For a general index $\sigma=(\bm{v},k)\in\Sigma$, let $\bm{w}+\mathsf{P}_j$ be the translated patch whose center is $\bm{v}$. We define
		\begin{equation}\label{eqn:PatchEqn}
			U_{\sigma}(x):=\mathcal{U}_{j,k}(x-\bm{w}),\quad x\in \bm{w}+\mathsf{P}_j .
		\end{equation}
		Equivalently, $U_{\sigma}$ solves
		\[
		\left\{\begin{aligned}
			\Delta U_{\sigma} &= 0,\quad x\in \bm{w}+\mathsf{P}_j,\\
			U_{\sigma} &= 1,\quad x\in \partial D_{\sigma},\\
			U_{\sigma} &= 0,\quad x\in \partial(\bm{w}+\mathsf{P}_j)\setminus \partial D_{\sigma}.
		\end{aligned}\right.
		\]
		For $\sigma'=(\bm{v}',k')\in\Sigma$, the \emph{patch approximation $\widetilde{\mathcal{C}}$} is defined by
		\begin{gather}\label{eqn:patchApprox}
			\widetilde{\mathcal{C}}(\sigma,\sigma') :=
			\left\{\begin{aligned}
				&-\int_{\partial D_{\sigma'} }\frac{\partial U_{\sigma}}{\partial \nu }(y) \, \du\sigma(y)
				&&\text{if }Y_{\bm{v}'}\setminus \overline{\mathcal{D}}\subset \bm{w}+\mathsf{P}_j, \\
				&0
				&&\text{otherwise}.
			\end{aligned} \right.
		\end{gather}
	\end{definition}
	
	The above definition shows why the patch approximation is computationally efficient. Only the $K\times N$ local problems in \eqref{eqn:Patch:primitive} need to be solved; all other local solutions are obtained from these by translation. The resulting operator keeps the interactions between a resonator and the resonators contained in its associated patch and sets all remaining interactions to zero.
	\begin{remark}
		We emphasize here that the patch approximation of the capacitance operator $\mathcal{C}$ is generally not symmetric. In addition, the definition of the patch approximation of the capacitance operator $\widetilde{\mathcal{C}}$ depends on the specific choice of the translated patch $\bm{w}+\mathsf{P}_j$ for all $\sigma\in\Sigma$. In later analysis, we specify the patch choice for each $\sigma\in\Sigma$ to ensure that the operator $\widetilde{\mathcal{C}}$ is determined uniquely.
	\end{remark}
	
	\section{Error analysis of the patch approximation}\label{sec:err-approx}
	In this section, we estimate the error of the patch approximation $\widetilde{\mathcal{C}}$ when the domain $\Omega$ is covered by finitely many patches $\{\mathsf{P}_i\}_{i=1}^N$ of size $M$. Specifically, for $\sigma=(\bm{v},k)\in\Sigma$ and $\sigma'=(\bm{v}',k') \in\Sigma$, we let $\bm{w} + \mathsf{P}_j$ be a patch of $\Omega$ centered at $\bm{v}$ and first consider the patch-wise error:
	\begin{equation}\label{eqn:patch_err_def}
		\mathcal{E}(M,\sigma):=  \sum_{\sigma':\bm{v}'\in \Pi_j}\int_{\partial (\bm{w}+D_{\sigma'})} \Big[\frac{\partial V_{\sigma}}{\partial \nu}(y)-\frac{\partial U_{\sigma}}{\partial \nu}(y)\Big]\, \du\sigma(y).
	\end{equation}
	By the maximum principle of $V_{\sigma}-U_{\sigma}$ on $\bm{w} + \mathsf{P}_j$, we have
	\begin{align*}
		V_{\sigma}(x) \ge \tau_{\bm{w}}\mathcal{U}_{j,l}(x) = U_{\sigma}, 
		\quad x\in\bm{w}+\mathsf{P}_j.
	\end{align*}
	Therefore, the error is no less than zero, and we have the following estimate for $\mathcal{E}(M,\sigma)$.
	\begin{lemma}\label{lem:patch-entry-error}
		If the domain $\Omega$ is covered by finitely many patches of size $M$, then there exist positive constants $C$ and $\rho\in(0,1)$ such that
		\begin{equation}\label{eqn:patch_error}
			\mathcal{E}(M,\sigma) \le C\rho^{M},\quad \forall \sigma\in \Sigma.
		\end{equation}
	\end{lemma}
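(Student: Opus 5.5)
Set $W:=V_\sigma-U_\sigma$ on the translated patch $\bm{w}+\mathsf{P}_j$. We have $W$ harmonic there, $W=0$ on every resonator boundary in the patch, and $W=V_\sigma\ge 0$ on the artificial outer boundary $\Gamma_{\mathrm{out}}:=\partial(\bm{w}+\mathsf{P}_j)\setminus\partial\mathcal{D}$. By the maximum principle $0\le W\le 1$, so $\mathcal{E}(M,\sigma)\ge 0$.

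\textbf{Step 1: rewrite the error as flux through $\Gamma_{\mathrm{out}}$.} Green's identity for $W$ on the patch turns the sum of fluxes in \eqref{eqn:patch_err_def} over all resonator boundaries inside the patch into the flux of $W$ through $\Gamma_{\mathrm{out}}$. Taking a cut-off region a fixed distance inside the patch (the cells at lattice distance $\ge M-1$ from the center), the error is bounded by
\[
\int_{A}|\nabla W|^2\,\du x+\int_A |\nabla V_\sigma|\,|\nabla\chi|\,\du x,
\]
where $A$ is the union of the outermost cell layer of $\widetilde{\mathsf{P}}_{\bm{v}}(M)$, intersected with $\Omega$, and $\chi$ is a smooth cutoff equal to $1$ near the center cells and $0$ near $\Gamma_{\mathrm{out}}$. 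This uses only the Green identity with test function $\chi$ and that $W$ vanishes on resonator boundaries.

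\textbf{Step 2: reduce to decay of $V_\sigma$ and $U_\sigma$ away from $D_\sigma$.} Since $W\le V_\sigma$ and $U_\sigma\le V_\sigma$, it suffices to show that $V_\sigma$ is exponentially small in $L^\infty$ on cells at lattice distance $\ge M-1$ from $\bm{v}$. Given that, Caccioppoli and boundary gradient estimates near the resonators, which hold uniformly because of the $C^1$ boundaries, the finitely many resonator types, and the separation in \eqref{eq_geometry_assumption_4} and \eqref{eq_geometry_assumption_3}, give $\|\nabla W\|_{L^\infty(A)}\lesssim \sup_{A'} V_\sigma$ on a slightly enlarged layer $A'$. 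The layer $A$ has $O(M)$ cells, so the linear factor can be absorbed into a slightly larger $\rho$.

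\textbf{Step 3 (main obstacle): exponential decay of $V_\sigma$.} This is the multiple-scattering mechanism behind \Cref{apthm:exp_decay}, and I would prove it by an iterative maximum-principle argument.

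By \eqref{eq_geometry_assumption_3} every cell $Y_{\bm{u}}$ contains a resonator, on whose boundary $V_\sigma=0$. A Harnack-chain or harmonic-measure argument in the connected set $Y_{\bm{u}}$ enlarged by its neighbours then gives a uniform $\theta<1$, depending only on the finitely many cell and resonator shapes, with
\[
\sup_{Y_{\bm{u}}\setminus\overline{\mathcal{D}}}V_\sigma\le\theta\sup_{\partial(\text{neighbourhood})}V_\sigma .
\]
The reason is that the harmonic measure of the inner resonator, seen from any point of the cell, is bounded below.

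Let $m_k$ be the supremum of $V_\sigma$ on $\{\text{cells at distance}\ge k\}$. Since $V_\sigma$ is bounded, equal to $1$ only on $\partial D_\sigma$, and tends to zero at infinity, the maximum principle on the unbounded region gives $m_{k+1}\le\theta m_k$. Hence $m_k\le\theta^k$.

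The delicate points are the following. First, the harmonic-measure lower bound must be uniform over the whole non-periodic arrangement; I would get this from compactness over the finitely many polygon and resonator types, using translation invariance. Second, the same bound must hold for $U_\sigma$ on the truncated patch. This follows from the ordering $U_\sigma\le V_\sigma$ established above. Combining Steps 1–3 yields $\mathcal{E}(M,\sigma)\le C\rho^M$ with $\rho$ any number in $(\theta,1)$.
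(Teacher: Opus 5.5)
Your argument is correct in outline, but it reaches the key estimate by a genuinely different route. The paper also starts from the flux identity \eqref{eqn:normal_exterior}. It then bounds the flux through $\partial\Gamma_{\bm w,j}$ by $C|\partial\Gamma_{\bm w,j}|\sup_{\Gamma_{\bm w,j}}V_\sigma$ using \Cref{aplem:normal_deri}. The exponential smallness of $V_\sigma$ comes from the representation \eqref{apeqn:VGreen_rep} together with the Green's function bound \eqref{apeqn:Greenfunc_estimate}, which rests on Gaussian heat-kernel bounds and the spectral gap $\lambda_\Omega$ of the exterior Dirichlet Laplacian.

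You replace the heat kernel by an elementary iterated maximum principle. Because every cell contains a resonator on which $V_\sigma=0$, the supremum of $V_\sigma$ contracts by a fixed factor every fixed number of cell layers. This is self-contained and makes the multiple-scattering mechanism explicit. Your cutoff form of Step 1 also needs only interior gradient estimates, so it avoids a gradient bound on the polygonal outer boundary $\partial\Gamma_{\bm w,j}$, which has corners and on which $W$ does not vanish. In exchange, the paper's route ties the decay rate to $\sqrt{\lambda_\Omega}$ and gives, in the same stroke, the entrywise bound of \Cref{apthm:exp_decay} that is reused in \Cref{lem:block_sum_estimate}. Your $\rho$ comes from a harmonic-measure constant with no spectral meaning, although your argument would also give an exponential entrywise bound of the same type.

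Two points need tightening.

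First, the cutoff must follow the actual outer boundary $\Gamma_{\mathrm{out}}$ of $\bm w+\mathsf P_j$, which may extend out to $\widetilde{\mathsf P}_{\bm v}(2M)$, not the outermost layer of $\widetilde{\mathsf P}_{\bm v}(M)$. Otherwise you lose the fluxes of the patch resonators beyond that layer, and you pick up non-flux terms on resonators where $\chi$ is not constant. Take $\chi\equiv 0$ within distance $c/3$ of $\Gamma_{\mathrm{out}}$ and $\chi\equiv 1$ beyond distance $2c/3$. Then $\chi\equiv1$ on every resonator of the patch, and
$\mathcal E(M,\sigma)=-\int\nabla W\cdot\nabla\chi\,\du x\le\|\nabla\chi\|_\infty\int_{\operatorname{supp}\nabla\chi}|\nabla W|\,\du x$.
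No $\int|\nabla W|^2$ term arises. Interior estimates bound $|\nabla W|$ by $\sup V_\sigma$ on cells at lattice distance at least $M-1$. The strip may meet $O(M^2)$ rather than $O(M)$ cells, which is still harmless. This choice also spares you $L^\infty$ boundary gradient bounds, which are not available for merely $C^1$ boundaries.

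Second, uniformity of $\theta$ does not follow from finiteness of shapes plus translation invariance alone, because resonator positions inside a cell range over a continuum. You can either argue by compactness of admissible positions and continuity of harmonic measure, or use an explicit barrier. Suppose $B(p,r_0)\subset D_*\subset Y_{\bm u}$, with $r_0$ uniform over the finitely many shapes, and $\overline{B(p,R)}\cap D_\sigma=\emptyset$. Comparing $V_\sigma$ with $\log(|x-p|/r_0)/\log(R/r_0)$ on $B(p,R)\cap\Omega$ gives
\[
\sup_{Y_{\bm u}\setminus\overline{\mathcal D}}V_\sigma\le\frac{\log(\operatorname{diam}Y_{\bm u}/r_0)}{\log(R/r_0)}\sup_{\partial B(p,R)\cap\overline{\Omega}}V_\sigma .
\]
Hence $m_{k+L}\le\theta m_k$ for a fixed $L$, and $\rho$ can be any number in $(\theta^{1/L},1)$. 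Neither decay at infinity nor a maximum principle on an unbounded region is needed.
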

	
	\begin{proof}
		Since $V_{\sigma}-U_{\sigma}$ is harmonic in $\bm{w}+\mathsf{P}_j$, we have
		\begin{equation}\label{eqn:normal_exterior}
			\mathcal{E}(M,\sigma) = \int_{\partial \Gamma_{\bm{w},j}} \Big[\frac{\partial V_{\sigma}}{\partial \nu}(y)-\frac{\partial U_{\sigma}}{\partial \nu}(y)\Big]\, \du\sigma(y),
		\end{equation}
		where 
		\[ \partial \Gamma_{\bm{w},j}:= \partial (\bm{w} + \mathsf{P}_j)\setminus\bigcup_{\bm{v}'\in \Pi_{j}}\bigcup_{k'=1}^{K}\partial (\bm{w}+D_{\sigma'})  \]
		denotes the exterior boundary of $\bm{w} + \mathsf{P}_j$. The normal vector $\nu $ on $ \partial \Gamma_{\bm{w},j}$ points outward from $\bm{w} + \mathsf{P}_j$. 
		By \Cref{aplem:normal_deri}, the right-hand side of \eqref{eqn:normal_exterior} is bounded by 
		\begin{equation}
			\mathcal{E}(M,\sigma) \le C|\partial \Gamma_{\bm{w},j} |  \sup_{\partial \Gamma_{\bm{w},j}}|V_{\sigma}-U_{\sigma}| = C|\partial \Gamma_{\bm{w},j} |  \sup_{\Gamma_{\bm{w},j}}V_{\sigma}.
		\end{equation} 
		The last equality holds since $U_{\sigma}$ vanishes on $ \Gamma_{\bm{w},j}$. 
		The estimate \eqref{apeqn:Greenfunc_estimate} for Green's function on $\Omega$, together with \eqref{apeqn:VGreen_rep}, implies that there exist positive constants $C$ and $c$ such that
		\begin{equation}
			\sup_{\Gamma_{\bm{w},j}}V_{\sigma} \le C \frac{\eu^{-c\operatorname{dist}(D_{\sigma},\partial \Gamma_{\bm{w},j})}}{\sqrt{\operatorname{dist}(D_{\sigma},\partial \Gamma_{\bm{w},j})}}.
		\end{equation} 
		Therefore, by \eqref{eqn:assump_cover}, we have 
		\[ \mathcal{E}(M,\sigma) \le 4C(4M+1)^2  \frac{\eu^{-c\operatorname{dist}(D_{\sigma},\partial \Gamma_{\bm{w},j})}}{\sqrt{\operatorname{dist}(D_{\sigma},\partial \Gamma_{\bm{w},j})}}.\]
		By the definition of a patch of size $M$ and the uniform separation constants in \Cref{asmp_geometry_assumption}, the resonator $D_\sigma$ is separated from the exterior boundary of the translated patch by a distance bounded below as
		\[
		\operatorname{dist}(D_\sigma,\partial\Gamma_{\bm w,j})\ge c_0M-c_1
		\]
		for constants $c_0,c_1>0$ independent of $M$ and $\sigma$. Hence, the polynomial factor $(4M+1)^2$ can be absorbed into the exponential. Combining these estimates, there exist positive constants $C$ and $\rho\in(0,1)$ such that
		\[ \mathcal{E}(M,\sigma)\le 4C(4M+1)^2  \frac{\eu^{-c\operatorname{dist}(D_{\sigma},\partial \Gamma_{\bm{w},j})}}{\sqrt{\operatorname{dist}(D_{\sigma},\partial \Gamma_{\bm{w},j})}}\le C \rho^{M}, \]
		which concludes the proof of \eqref{eqn:patch_error}.
		
	\end{proof}
	
	Next, we prove uniform estimates for the entries of $\mathcal{C}-\widetilde{\mathcal{C}}$.
	
	\begin{lemma}\label{lem:block_sum_estimate}
		If the domain $\Omega$ is covered by finitely many patches of size $M$, then there exist constants $C>0$ and $\widetilde{\rho}\in(0,1)$, independent of $M$, such that
		\begin{equation}\label{eqn:col_sum_scalar}
			\sup_{\sigma\in\Sigma}
			\sum_{\sigma'\in\Sigma}
			|(\mathcal{C}-\widetilde{\mathcal{C}})(\sigma,\sigma')|
			\le C\widetilde{\rho}^M,
		\end{equation}
		and
		\begin{equation}\label{eqn:row_sum_scalar}
			\sup_{\sigma'\in\Sigma}
			\sum_{\sigma\in\Sigma}
			|(\mathcal{C}-\widetilde{\mathcal{C}})(\sigma,\sigma')|
			\le C\widetilde{\rho}^M.
		\end{equation}
	\end{lemma}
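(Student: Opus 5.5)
We split the entries of $\mathcal{C}-\widetilde{\mathcal{C}}$ into two groups, according to whether $\sigma'$ lies inside or outside the translated patch $\bm{w}+\mathsf{P}_j$ centered at $\bm{v}$. Here $\sigma=(\bm{v},k)$ and $\bm{w}+\mathsf{P}_j$ is the patch used to define $U_\sigma$.

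\emph{Entries inside the patch.} For $\sigma'$ with $Y_{\bm{v}'}\setminus\overline{\mathcal{D}}\subset\bm{w}+\mathsf{P}_j$ we have
\[
(\mathcal{C}-\widetilde{\mathcal{C}})(\sigma,\sigma')=-\int_{\partial D_{\sigma'}}\partial_\nu(V_\sigma-U_\sigma)\,\du\sigma .
\]
The function $W:=V_\sigma-U_\sigma$ is harmonic in the patch and vanishes on every resonator boundary inside it. It is also nonnegative by the maximum principle. Hence $\partial_\nu W$ (normal pointing out of $D_{\sigma'}$, i.e. into the patch) has a fixed sign on each $\partial D_{\sigma'}$: it is $\ge0$ there. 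Consequently every inside entry has one sign, and the sum of their absolute values equals $|\mathcal{E}(M,\sigma)|$. By \Cref{lem:patch-entry-error} this is at most $C\rho^M$. If the sign argument is delicate at non-smooth points, we instead bound each entry directly: by \Cref{aplem:normal_deri} (boundary gradient control for harmonic functions vanishing on $C^1$ boundaries), $|\partial_\nu W|$ on $\partial D_{\sigma'}$ is controlled by $\sup W$ near $D_{\sigma'}$. In turn, $\sup W\le\sup_{\partial\Gamma_{\bm{w},j}}V_\sigma\le C\eu^{-cM}$, so the sum over the at most $K(4M+1)^2$ inside indices is bounded by $CM^2\eu^{-cM}$.

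\emph{Entries outside the patch.} There $\widetilde{\mathcal{C}}=0$, so the entry is $\mathcal{C}(\sigma,\sigma')$. For these $\sigma'$, $\operatorname{dist}(D_\sigma,D_{\sigma'})\ge c_0M-c_1$ by the patch-size definition. The exponential off-diagonal decay of \Cref{apthm:exp_decay}, $|\mathcal{C}(\sigma,\sigma')|\le C\eu^{-c\operatorname{dist}(D_\sigma,D_{\sigma'})}$, then applies. Summing over $\sigma'$, grouped in annuli of lattice cells with $\mathcal{O}(r)$ cells at distance $\sim r$ (uniform by \Cref{asmp_geometry_assumption}: $K$ resonators per polygon, finitely many polygon types), gives $\sum_{r\ge c_0M-c_1}Cr\eu^{-cr}\le C'\widetilde\rho^{M}$. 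Together with the inside part this proves \eqref{eqn:col_sum_scalar}.

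\emph{Fixed column.} For \eqref{eqn:row_sum_scalar} we fix $\sigma'$ and sum over $\sigma$. The pairs with $\sigma'$ outside the patch of $\sigma$ are handled as before, using the symmetric distance bound and counting $\sigma$ in annuli around $D_{\sigma'}$. For the pairs with $\sigma'$ inside the patch of $\sigma$, each entry is bounded by $C\sup_{\partial\Gamma}V_\sigma\lesssim\eu^{-cM}$ via the per-entry bound above. Such $\sigma$ satisfy $\bm{v}\in\widetilde{\Pi}_{\bm{v}'}(2M)$ by \eqref{eqn:assump_cover}, so there are at most $CM^2$ of them, giving $CM^2\eu^{-cM}\le C\widetilde\rho^M$.

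\emph{Main obstacle.} The hardest step is the per-entry bound on $|\partial_\nu W|$ on interior resonator boundaries, uniformly in $\sigma,\sigma'$. We would obtain it from \Cref{aplem:normal_deri} applied in a fixed neighbourhood of $D_{\sigma'}$. This uses the uniform $C^1$ regularity and separation (only finitely many shapes) and the fact that $W$ vanishes on $\partial D_{\sigma'}$. Throughout, one chooses $\widetilde\rho\in(\rho,1)$ to absorb the polynomial factors.
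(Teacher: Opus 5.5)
Your proposal is correct and follows essentially the same route as the paper. Both arguments split each sum into in-patch and out-of-patch entries. The in-patch entries are controlled by the fixed sign of $V_\sigma-U_\sigma$, which makes their sum equal $\mathcal{E}(M,\sigma)$, bounded by \Cref{lem:patch-entry-error}; for the second sum one adds the $(4M+1)^2$ count from \eqref{eqn:assump_cover}. The out-of-patch entries are controlled by the exponential decay in \Cref{apthm:exp_decay}. Your annulus counting and your per-entry fallback via \Cref{aplem:normal_deri} only make explicit steps that the paper leaves implicit.
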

	
	\begin{proof}
		We first prove \eqref{eqn:col_sum_scalar}. For given $\sigma = (\bm{v},k)\in \Sigma$ and $\sigma' = (\bm{v}',k')\in \Sigma$, let $\bm{w}+\mathsf{P}_j$ be the translated patch centered at the cell $\bm{v}$. By definition, we have
		\begin{align*}
			\sum_{\sigma'\in\Sigma}
			|(\mathcal{C}-\widetilde{\mathcal{C}})(\sigma,\sigma')|
			&= \sum_{\sigma':\bm{v}'\in \Pi_{j}}
			|(\mathcal{C}-\widetilde{\mathcal{C}})(\sigma,\sigma')|+\sum_{\sigma':\bm{v}'\in \Lambda\setminus\Pi_{j}}
			|\mathcal{C}(\sigma,\sigma')| \\
			&= \mathcal{E}(M,\sigma)+\sum_{\sigma':\bm{v}'\in\Lambda\setminus\Pi_{j}}\int_{\partial (\bm{w}+D_{\sigma'})} \frac{\partial V_{\sigma}}{\partial \nu}(y)\, \du \sigma(y).
		\end{align*}
		The first term is bounded by $C\rho^M$ by
		\Cref{lem:patch-entry-error}. For the
		second term, we obtain using \Cref{apthm:exp_decay}
		\[
		\sum_{\sigma':\bm{v}'\in\Lambda\setminus\Pi_{j}}|\mathcal{C}(\sigma,\sigma')|
		\le C\rho^M.
		\]
		This proves \eqref{eqn:col_sum_scalar}.
		
		We next prove \eqref{eqn:row_sum_scalar}. For fixed $\sigma'=(\bm{v}',k')\in\Sigma$, consider $\sigma=(\bm{v},k)\in\Sigma$ and choose a translated patch $\bm{w}(\sigma) + \mathsf{P}_{j(\sigma)}$ centered at $\bm{v}$. Define 
		\[
		\Xi_{\sigma'}:=	\{ \sigma\in \Sigma: Y_{\bm{v}'}\setminus \overline{\mathcal{D}} \subset \bm{w}(\sigma) + \mathsf{P}_{j(\sigma)} \}.
		\]
		Then
		\[
		\sum_{\sigma\in\Sigma}
		|(\mathcal{C}-\widetilde{\mathcal{C}})(\sigma,\sigma')|
		\le \sum_{\sigma\in\Xi_{\sigma'}}
		|(\mathcal{C}-\widetilde{\mathcal{C}})(\sigma,\sigma')| + \sum_{\sigma\in\Sigma\setminus\Xi_{\sigma'}}
		|\mathcal{C}(\sigma,\sigma')|.
		\]
		The second term is also bounded by $C\rho^M$ by
		\Cref{apthm:exp_decay}. For the first term, the covering assumption \eqref{eqn:assump_cover} implies that $\# \, \Xi_{\sigma'}\le (4M+1)^2$. Moreover, \Cref{lem:patch-entry-error} gives
		\[ |(\mathcal{C}-\widetilde{\mathcal{C}})(\sigma,\sigma')|\le C\rho^{M}, \]
		for $\sigma\in \Xi_{\sigma'}$. Therefore,
		\[
		\sum_{\sigma\in\Xi_{\sigma'}}|(\mathcal{C}-\widetilde{\mathcal{C}})(\sigma,\sigma')|
		\le C(4M+1)^2\rho^M
		\le C\widetilde\rho^M,
		\]
		after replacing $\rho$ with a larger number
		$\widetilde\rho\in(\rho,1)$. This proves
		\eqref{eqn:row_sum_scalar}.
	\end{proof}
	
	\subsection{Norm error bound for the patch approximation}
	Now, we derive the operator-norm error bound for the patch approximation of the capacitance operator.
	\begin{theorem}\label{thm:op_error_capacitance}
		If the domain $\Omega$ is covered by finitely many patches of size $M$, then there exist constants $C>0$ and $\widetilde{\rho}\in(0,1)$, which are independent of $M$, such that
		\begin{equation}
			\|\mathcal{C}-\widetilde{\mathcal{C}}\|_{\mathcal{B}(\ell^2(\Sigma))}
			\le C\widetilde{\rho}^{M}.
		\end{equation}
	\end{theorem}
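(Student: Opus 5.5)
The plan is to apply the Schur test (the Riesz--Thorin interpolation between $\ell^1$ and $\ell^\infty$) to the kernel $E(\sigma,\sigma'):=(\mathcal{C}-\widetilde{\mathcal{C}})(\sigma,\sigma')$, using the two uniform sum bounds of \Cref{lem:block_sum_estimate} as input.

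Recall the convention in \eqref{eq_cap_operator_def}: an operator with kernel $E$ acts by $(E\boldsymbol{\Phi})(\sigma')=\sum_{\sigma}E(\sigma,\sigma')\Phi(\sigma)$. Set
\[
\alpha:=\sup_{\sigma'\in\Sigma}\sum_{\sigma\in\Sigma}|E(\sigma,\sigma')|,\qquad \beta:=\sup_{\sigma\in\Sigma}\sum_{\sigma'\in\Sigma}|E(\sigma,\sigma')|,
\]
both of which are at most $C\widetilde\rho^{M}$ by \eqref{eqn:row_sum_scalar} and \eqref{eqn:col_sum_scalar}. For $\boldsymbol{\Phi},\boldsymbol{\Psi}\in\ell^2(\Sigma)$ we split $|E|=|E|^{1/2}|E|^{1/2}$ and apply Cauchy--Schwarz in the double sum:
\[
\Big|\sum_{\sigma,\sigma'}E(\sigma,\sigma')\Phi(\sigma)\overline{\Psi(\sigma')}\Big|
\le\Big(\sum_{\sigma,\sigma'}|E(\sigma,\sigma')||\Phi(\sigma)|^2\Big)^{1/2}\Big(\sum_{\sigma,\sigma'}|E(\sigma,\sigma')||\Psi(\sigma')|^2\Big)^{1/2}
\le\sqrt{\beta\alpha}\,\|\boldsymbol{\Phi}\|_{\ell^2}\|\boldsymbol{\Psi}\|_{\ell^2}.
\]
Taking the supremum over unit vectors gives $\|\mathcal{C}-\widetilde{\mathcal{C}}\|_{\mathcal{B}(\ell^2(\Sigma))}\le\sqrt{\alpha\beta}\le C\widetilde\rho^{M}$.

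Before applying this estimate, one must check that $\widetilde{\mathcal{C}}$ is a well-defined operator on $\ell^2(\Sigma)$. Each row of $\widetilde{\mathcal{C}}$ has finitely many nonzero entries, because the patches are finite. The sum bounds then imply $E\in\mathcal{B}(\ell^2)$, and since $\mathcal{C}$ is bounded by \Cref{apthm:exp_decay}, so is $\widetilde{\mathcal{C}}$. The constants are independent of $M$ because those in \Cref{lem:block_sum_estimate} are. The double series above converges absolutely by the same Tonelli bound, so the rearrangement is justified.

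There is no real obstacle here: all of the analytic content sits in \Cref{lem:block_sum_estimate}, in particular the counting bound $\#\Xi_{\sigma'}\le(4M+1)^2$ coming from the covering assumption \eqref{eqn:assump_cover}, whose polynomial factor is absorbed into $\widetilde\rho$. The only point requiring care is to pair the correct index sums with the row and column roles. Because the final estimate is symmetric in $\alpha$ and $\beta$, even a mislabeling would not affect the conclusion.
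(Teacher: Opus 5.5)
Your proposal is correct and follows essentially the paper's argument: a Schur-test bound driven by the two uniform sum estimates of Lemma~\ref{lem:block_sum_estimate}. The only cosmetic difference is that the paper applies the weighted Cauchy--Schwarz inequality pointwise to $|((\mathcal{C}-\widetilde{\mathcal{C}})\boldsymbol{\Phi})(\sigma')|^2$ and then sums over $\sigma'$, whereas you pair against $\boldsymbol{\Psi}$ in the bilinear form; both give $\sqrt{\alpha\beta}\le C\widetilde{\rho}^{M}$.
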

	
	\begin{proof}
		For any $\boldsymbol{\Phi} = \{\Phi(\sigma)\}_{\sigma\in\Sigma}\in \ell^2(\Sigma)$ and fixed $\sigma'\in\Sigma$,
		\[
		|((\mathcal{C}-\widetilde{\mathcal{C}})\boldsymbol{\Phi})({\sigma'})|
		\le
		\sum_{\sigma\in\Sigma}
		|(\mathcal{C}-\widetilde{\mathcal{C}})(\sigma,\sigma')|\,|\Phi(\sigma)|.\]
		By the weighted Cauchy-Schwarz inequality, it follows that
		\begin{gather*}
			|((\mathcal{C}-\widetilde{\mathcal{C}})\boldsymbol{\Phi})(\sigma')|^2
			\le
			\Big(\sum_{\sigma\in \Sigma}|(\mathcal{C}-\widetilde{\mathcal{C}})(\sigma,\sigma')|\Big)\Big(\sum_{\sigma\in\Sigma}|(\mathcal{C}-\widetilde{\mathcal{C}})(\sigma,\sigma')|\,|\Phi(\sigma)|^2\Big).
		\end{gather*}
		Summing over $\sigma'\in\Sigma$, and using \eqref{eqn:col_sum_scalar} and \eqref{eqn:row_sum_scalar}, we obtain
		\begin{align*}
			\|(\mathcal{C}-\widetilde{\mathcal{C}})\boldsymbol{\Phi}\|_{\ell^2(\Sigma)}^2
			&\le
			C\widetilde{\rho}^M\sum_{\sigma'\in\Sigma}\sum_{\sigma\in\Sigma}
			|(\mathcal{C}-\widetilde{\mathcal{C}})(\sigma,\sigma')|\,|\Phi(\sigma)|^2\\
			&=
			C\widetilde{\rho}^M\sum_{\sigma\in\Sigma}\|\Phi(\sigma)\|^2
			\sum_{\sigma'\in\Sigma}|(\mathcal{C}-\widetilde{\mathcal{C}})(\sigma,\sigma')|\\
			&\le C^2\widetilde{\rho}^{2M}\|\boldsymbol{\Phi}\|_{\ell^2(\Sigma)}^2.
		\end{align*}
		This gives
		\[
		\|(\mathcal{C}-\widetilde{\mathcal{C}})\boldsymbol{\Phi}\|_{\ell^2(\Sigma)}
		\le C\widetilde{\rho}^M\|\boldsymbol{\Phi}\|_{\ell^2(\Sigma)}.
		\]
		Hence,
		\[
		\|\mathcal{C}-\widetilde{\mathcal{C}}\|_{\mathcal{B}(\ell^2(\Sigma))}
		\le C\widetilde{\rho}^M,
		\]
		which completes the proof.
	\end{proof}

	\begin{remark}
		In the general case $|D_{\sigma}| \neq 1$, we solve the following generalized eigenvalue problem:
		\begin{equation}\label{eqn:gen_eigvalue}
			\mathcal{C}\boldsymbol{\Phi} = z\mathcal{M}\boldsymbol{\Phi}.
		\end{equation} 
		Here, the operator $\mathcal{M}:\ell^2(\Sigma)\to \ell^2(\Sigma)$ is given by 
		\begin{equation}\label{eqn:MassOperator}
			(\mathcal{M}\boldsymbol{\Phi})(\sigma) = |D_{\sigma}|\Phi({\sigma}),
		\end{equation}
		where $|D_{\sigma}|$ denotes the volume of $D_{\sigma}.$
		The proof of \Cref{thm_cont_to_disc_resolvent_converge} also holds by normalizing the projection operator on each resonator as follows: 
		\[ (Pu)(\sigma) = \frac{1}{|D_{\sigma}|}\int_{D_{\sigma}} u\, \du x. \]
		
	\end{remark}
	
	\section{Numerical experiments}\label{sec:num_patch}
	In this section, we present numerical results that validate our theoretical analysis.
	To assemble the capacitance operator, we discretize \eqref{eqn:Patch:primitive} with a Galerkin boundary element method based on a single-layer potential representation of the solution. We then use Colbrook's rectangular truncation method \cite{Colbrook2019,Colbrook2023} to compute localized eigenmodes while avoiding spectral pollution. 
	Details are given in \Cref{apsec:numericalmethod}. For eigenmodes that lie in absolutely continuous spectra, we will study them in future work.
	
	For simplicity, we first consider the square lattice with
	\begin{equation}\label{eqn:square_lattice}
		\mathring{\bm{v}}_{_1} = (1,0)^\top,\quad \mathring{\bm{v}}_{_2} = (0,1)^\top,
	\end{equation}
	and assume that there is only one polygon $Y^{(1)}$ given by 
	\[ Y^{(1)}= \{ s\mathring{\bm{v}}_{_1}+t\mathring{\bm{v}}_{_2}:s,t\in[-1/2,1/2)\}. \]
	Each translated polygon $Y_{\bm{v}}$ contains a single disk $D_{\sigma} = B(\bm{v},r_{\bm{v}})$ centered at $\bm{v}$ with radius $r_{\bm{v}}$, where $\sigma=(\bm{v},1)$.
	
	We also consider generalized honeycomb structures \cite{Miao2024,Cao2025} to validate our method in complex structures; see \Cref{exmp_resolvent_converge_interface}. In this setting, we recall that  
	\begin{equation}\label{eqn:honeycomb}
		\mathring{\bm{v}}_{_1} = (\sqrt{3}/2,-1/2)^\top,\quad \mathring{\bm{v}}_{_2} = (\sqrt{3}/2,1/2)^\top.
	\end{equation}
	
	\subsection{Numerical validation of the patch approximation}
	We first validate the patch approximation $\widetilde{\mathcal{C}}$ by computing its coefficients and comparing them with a reference solution obtained from a large patch size $M$.
	We consider
	\[ \widetilde{\mathsf{P}}_{\bm{0}}(M)= \bigcup_{\bm{v}\in\widetilde{\Pi}_{\bm{0}}(M)} Y_{\bm{v}}\setminus \overline{\mathcal{D}}, \]
	where the index set $\widetilde{\Pi}_{\bm{0}}$ is given by \eqref{eqn:square_index}. 
	The radius $r_{\bm{v}}$ is set to $r_{\bm{v}} = 0.3$ for all $\bm{v}\in\Lambda$. We solve \eqref{eqn:Patch:primitive} on $\widetilde{\mathsf{P}}_{\bm{0}}(M)$ and take the numerical solution at $M=10$ as the reference solution. We calculate the patch-wise error $\mathcal{E}(M,\sigma) = \mathcal{E}(M)$, and the results are shown in \Cref{fig:err_patch_element}.
	\begin{figure}[htbp] 
		\centering
		\includegraphics[width = 0.45\textwidth]{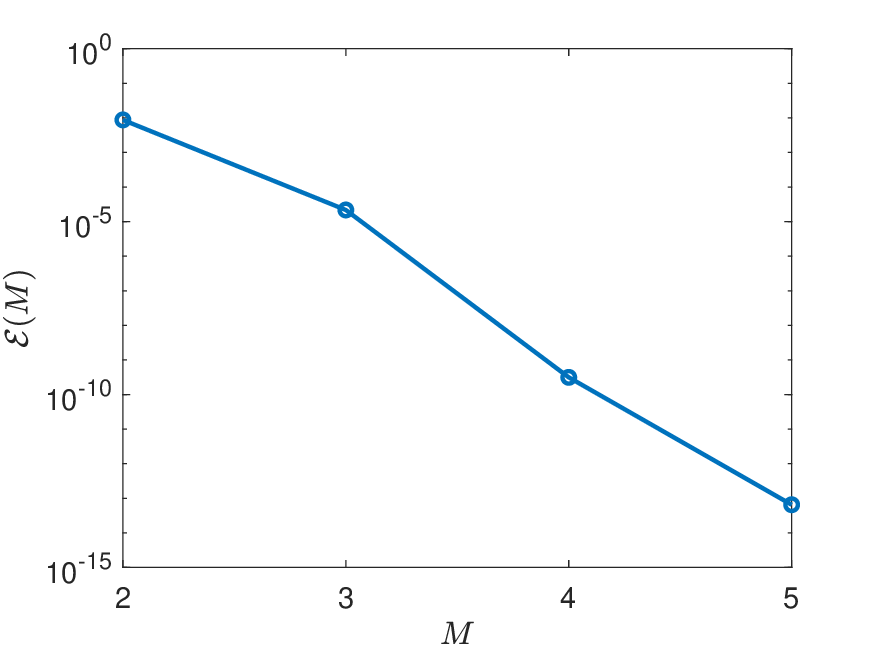}
		\caption{An illustration of element-wise error.}
		\label{fig:err_patch_element}
	\end{figure}
	
	As shown in \Cref{fig:err_patch_element}, the patch approximation converges exponentially to the reference solution by element.

	\subsection{Acceleration of the patch approximation}
	In this subsection, we show that the patch approximation substantially accelerates the assembly of the capacitance operator. We consider disk radii $r_{\bm{v}}$ satisfying
	\begin{gather*}
		r_{\bm{v}} = \left\{\begin{aligned}
			0.15,\quad \bm{v}=(0,0)^{\top},\\
			0.3,\quad \bm{v}\neq (0,0)^{\top}.
		\end{aligned}\right. 
	\end{gather*}
	This configuration supports a defect mode localized near $\bm{0}$. We compute the patch approximation of the capacitance operator $\widetilde{\mathcal{C}}$ and adopt, as described before, the rectangular truncation method to calculate the eigenvalue of the defect mode. 
	
	Specifically, let 
	\[\Sigma_R:=\{(m\mathring{\bm{v}}_{_1}+n\mathring{\bm{v}}_{_2},1): \max(|m|,|n|)\le R\}\]
	denote the truncated index set for the capacitance operator.
	We first calculate the patch approximation $\widetilde{\mathcal{C}}$ of the capacitance operator $\mathcal{C}$ by choosing the regularized patch of size $M = 2,3,4,5$. Then, we truncate $\widetilde{\mathcal{C}}$ by computing
	\[ 
	P_{\mathcal{N}'} (\widetilde{\mathcal{C}}-z\mathcal{M}) P_{\mathcal{N}},
	\quad \mathcal{N} = \Sigma_{8},\quad \mathcal{N}' = \Sigma_{8+M}.
	\]
	Here, the operator $\mathcal{M}$ is given by \eqref{eqn:MassOperator} and 
	$P_{\mathcal{N}}$ and $P_{\mathcal{N}'}$ are the projections on the computational domains $\mathcal{N}$ and $\mathcal{N}'$, respectively; see \Cref{subsec:pn}.
	
	The reference solution is obtained by solving the eigenvalue problem for the following truncated matrix: 
	\[ P_{\mathcal{N}'_{\mathrm{ref}}} (\mathcal{C}_{\mathrm{ref}}-z\mathcal{M}) P_{\mathcal{N}_{\mathrm{ref}}},
	\quad \mathcal{N}_{\mathrm{ref}} = \Sigma_{8},\quad \mathcal{N}'_{\mathrm{ref}} = \Sigma_{16}, \]
	Each element of $P_{\mathcal{N}'_{\mathrm{ref}}} (\mathcal{C}_{\mathrm{ref}}-z\mathcal{M}) P_{\mathcal{N}_{\mathrm{ref}}}$ is given by
	\begin{gather*}
		\mathcal{C}_{\mathrm{ref}}(\sigma,\sigma') = -\int_{\partial D_{\sigma'}}\frac{\partial U_{\sigma,\mathrm{ref}}}{\partial \nu}(y) \, \du \sigma(y),\quad \sigma\in\mathcal{N}_{\mathrm{ref}},\sigma'\in\mathcal{N}'_{\mathrm{ref}}.
	\end{gather*}
	Here, $U_{\sigma,\mathrm{ref}}$ is the solution to the following boundary value problem:
	\[
	\left\{\begin{aligned}
		\Delta U_{\sigma,\mathrm{ref}} &= 0,\quad x\in \widetilde{\mathsf{P}}_{\bm{0}}(16),\\
		U_{\sigma,\mathrm{ref}} &= 1,\quad x\in \partial D_{\sigma},\\
		U_{\sigma,\mathrm{ref}} &= 0,\quad x\in \partial\widetilde{\mathsf{P}}_{\bm{0}}(16)\setminus \partial D_{\sigma}.
	\end{aligned}\right.
	\]
	
	\Cref{table:compareDefect} compares the eigenvalue error, runtime, and memory consumption for the construction of the patch approximation. The results show that $M=2$ already provides sufficient accuracy for this example. 
	\begin{table}[htbp]
		\centering
		\caption{Comparison of eigenvalue error, runtime, and memory consumption when building the patch approximation.}
		\begin{tabular}{cccc}
			\toprule
			& {$|z_{M}-z_{\mathrm{ref}}|$}  &  Runtime   &  Memory \\ 
			\midrule
			$M=2$ & $2.391610\times 10^{-5}$ & 1.562 s   & 34.10 MB\\
			$M=3$ & $1.749009\times 10^{-8}$ & 10.403 s  & 97.09 MB\\
			$M=4$ & $1.234923\times 10^{-10}$ & 48.206 s  & 271.29 MB\\
			$M=5$ & $1.350031\times 10^{-13}$ & 160.768 s & 530.58 MB\\
			Reference & -- & 756.965 s & 26892.22 MB\\
			\bottomrule
		\end{tabular}
		\label{table:compareDefect}
	\end{table}

	\subsection{Square defect} 
	In this subsection, we consider the square defect given by
	\begin{gather*}
		r_{\bm{v}} = \left\{\begin{aligned}
			&0.15,\quad -8\le m\le 8,|n|=8, \text{ or} -8\le n\le 8,|m|=8, \\
			&0.3,\quad (m,n) \text{ otherwise},
		\end{aligned}\right. 
	\end{gather*}
	where $\bm{v} = m\mathring{\bm{v}}_{_1}+n\mathring{\bm{v}}_{_2}$.
	We build the patch approximation of size $M=2$ and choose
	\[  \mathcal{N}=\Sigma_{16} ,\quad \mathcal{N}'=\Sigma_{18} . \]
	The corresponding rectangular matrix is $P_{\mathcal{N}'} (\widetilde{\mathcal{C}}-z\mathcal{M}) P_{\mathcal{N}}$.
	We plot some representative localized eigenmodes near the defect. \Cref{fig:square_wave} shows the absolute value of the corresponding eigenvectors. The value of $F_{\mathcal{N}}(z)$ defined in \eqref{def:fz} is also provided to show that this method gives an accurate approximation of the localized eigenmodes.
	\begin{figure}[h!] 
		\centering
		\includegraphics[width = 0.45\textwidth]{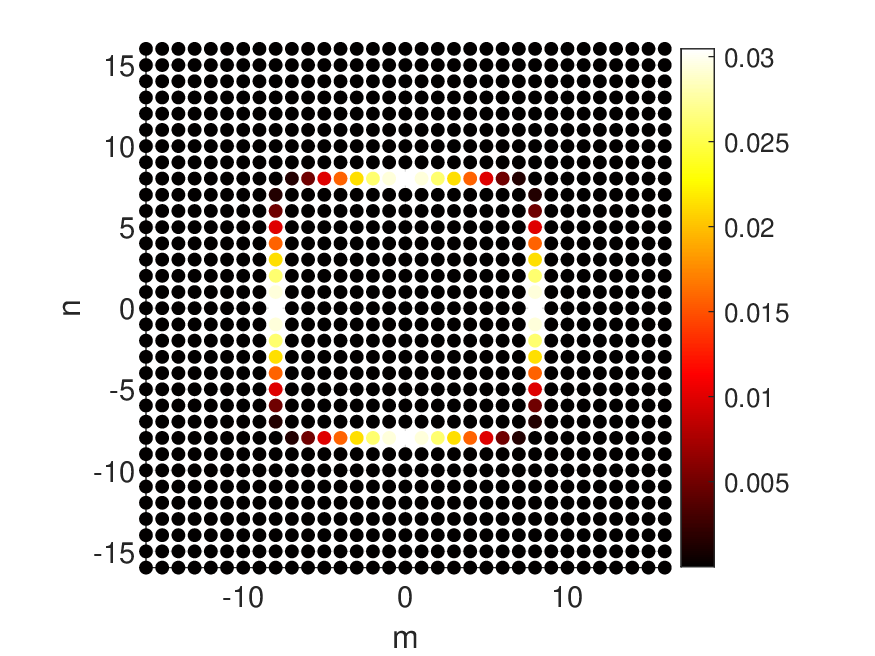}
		\hspace{-0.2cm}
		\includegraphics[width = 0.45\textwidth]{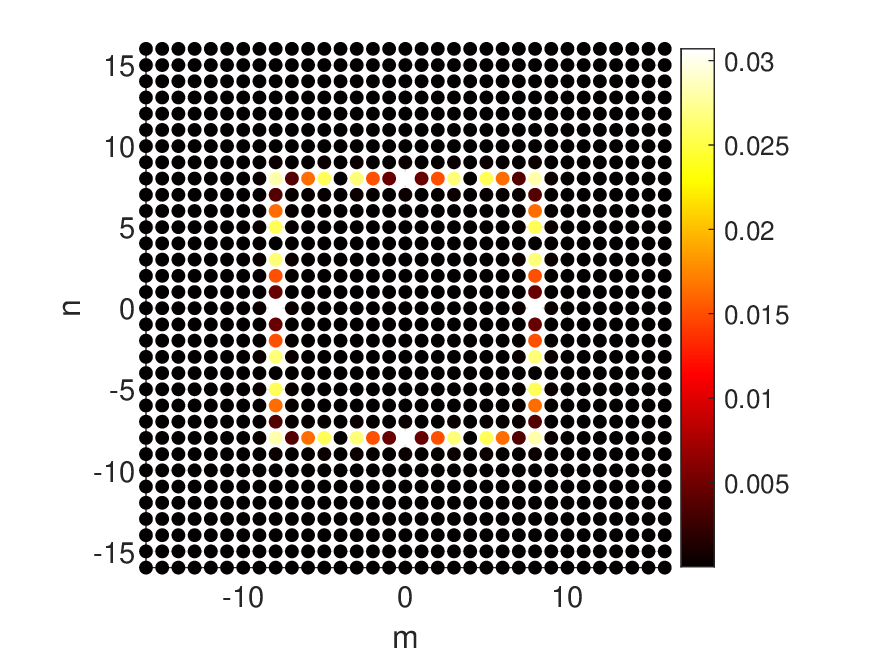}
		\caption{Localized eigenmodes near the square defect. Left panel: $z= 63.565, F_{\mathcal{N}}(z) = 1.603\times10^{-8}$. Right panel: $ z= 47.561, F_{\mathcal{N}}(z) = 1.282\times 10^{-6}$.}  
		\label{fig:square_wave}
	\end{figure}

	\subsection{Bent waveguides}
	In this subsection, we consider the case of a bent waveguide given by
	\begin{gather*}
		r_{\bm{v}} = \left\{\begin{aligned}
			&0.15,\quad  m\le 0,n=0, \text{ or }  n\le 0,m=0, \\
			&0.3,\quad (m,n) \text{ otherwise},
		\end{aligned}\right. 
	\end{gather*}
	where $\bm{v} = m\mathring{\bm{v}}_{_1}+n\mathring{\bm{v}}_{_2}$.
	We build the patch approximation of size $M=2$ and choose
	\[  \mathcal{N}=\Sigma_{16} ,\quad \mathcal{N}'=\Sigma_{18} . \]
	And the corresponding rectangular matrix is $P_{\mathcal{N}'} (\widetilde{\mathcal{C}}-z\mathcal{M}) P_{\mathcal{N}}$. \Cref{fig:bending_wave} shows both guided modes inside the bent waveguide and a corner mode. A detailed description of this corner mode is left for future work.
	
	\begin{figure}[h!] 
		\centering
		\includegraphics[width = 0.45\textwidth]{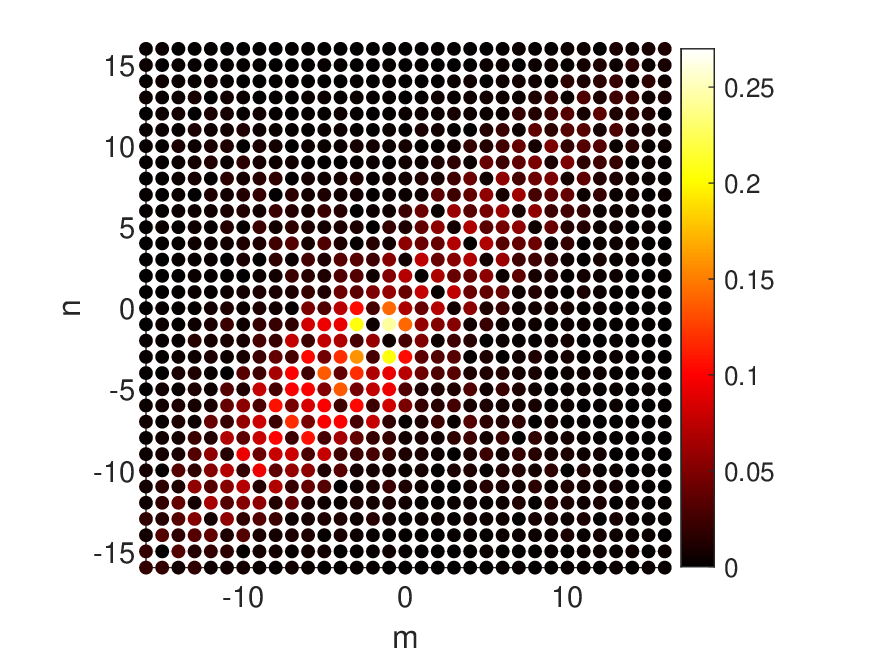}
		\hspace{-0.2cm}
		\includegraphics[width = 0.45\textwidth]{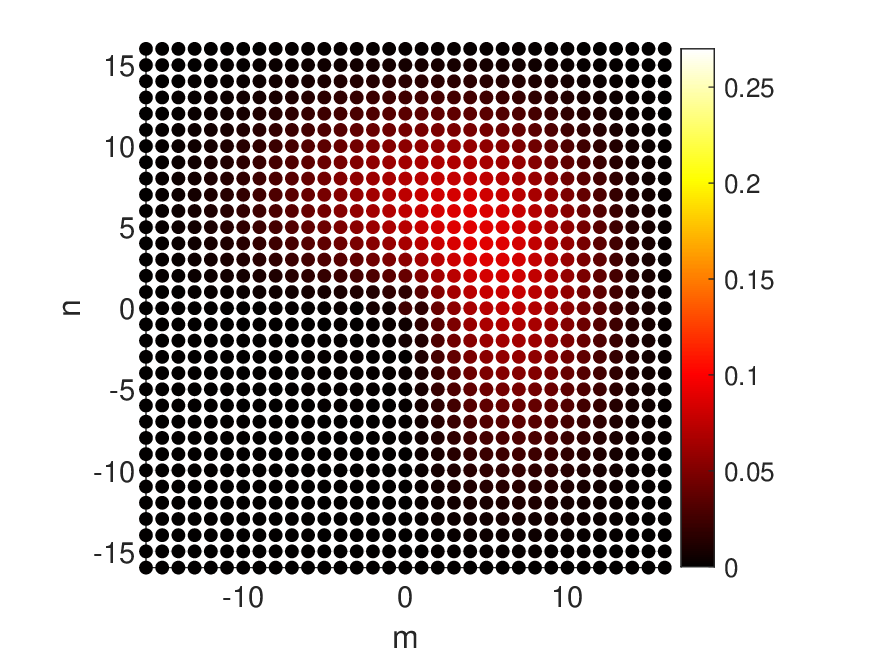}
		\includegraphics[width = 0.45\textwidth]{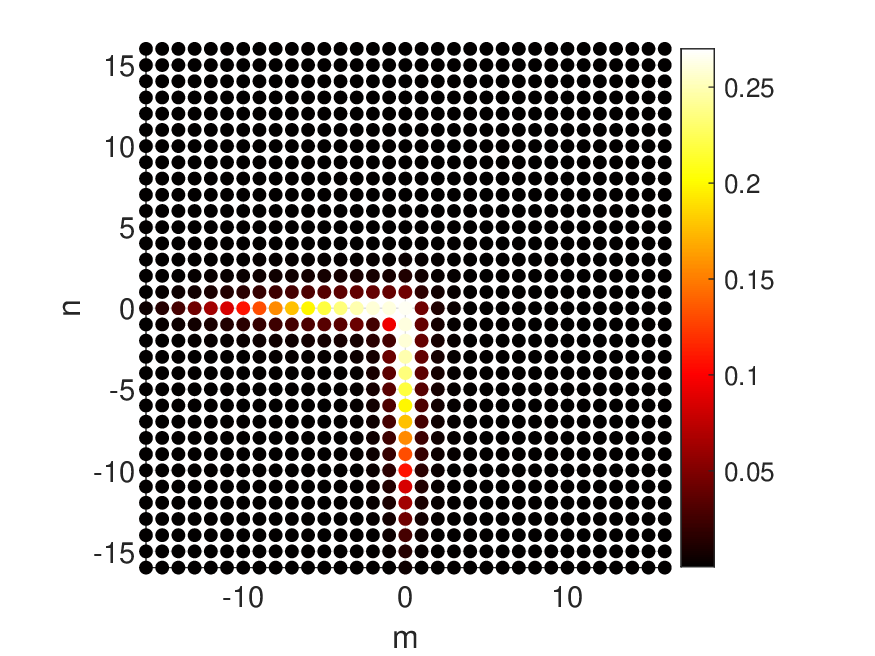}
		\hspace{-0.2cm}
		\includegraphics[width = 0.45\textwidth]{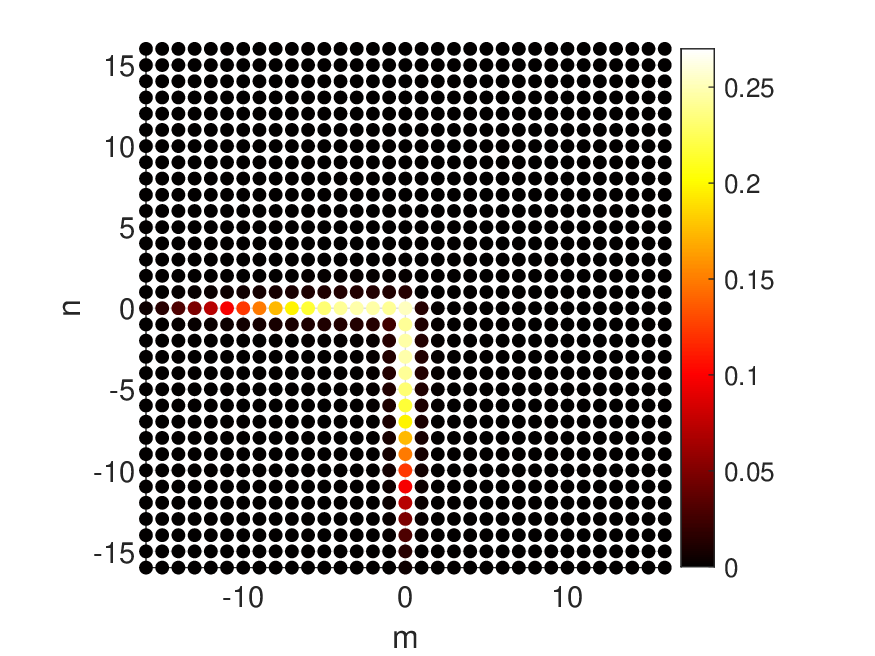}
		\caption{Localized eigenmodes in a bent waveguide. Top left panel: $z= 16.297, F_{\mathcal{N}}(z) = 2.706\times10^{-1}$. Top right panel: $ z= 37.658, F_{\mathcal{N}}(z) = 2.889\times 10^{-2}$. Bottom left panel: $ z= 40.903, F_{\mathcal{N}}(z) = 6.589\times 10^{-3}$. Bottom right panel: $ z= 63.752, F_{\mathcal{N}}(z) = 7.885\times 10^{-3}$.  }
		\label{fig:bending_wave}
	\end{figure}

	\subsection{Waveguide based on generalized honeycomb structures}
	In this subsection, we investigate waveguide structures based on generalized honeycomb lattices described in \Cref{exmp_interfaceMode}. 
	Such structures support a double Dirac point at the center of their corresponding Brillouin zone \cite{Miao2024,Cao2025}. 
	We consider type-I and type-II interfaces \cite{borui-cmp}, as illustrated in \Cref{fig:gen_honey}. Since the system is periodic in the $\bm{v}_{\alpha}$ direction, its spectrum can be computed using the Floquet-Bloch theory. However, this requires discretizing the operator for all quasi-periodicities $\alpha\in[-\pi,\pi)$, which leads to a relatively high computational cost. We compare the spectrum obtained by the Floquet-Bloch theory with that given by the patch approximation. As a reference, we use the supercell method for
	\[ \alpha = -\pi,-\pi+\pi/20,\ldots, \pi, \]
	and take the union over all $\alpha$ to approximate the full spectrum. We build the patch approximation of size $M=2$ and choose
	\[  
	\left\{\begin{aligned}
		\mathcal{N} &= \{ (m\bm{v}_{\alpha}+n\bm{v}_{\beta},k):\max(|m|,|n|)\le 30,\, 1\le k\le 6 \},\\
		\mathcal{N}' &= \{ (m\bm{v}_{\alpha}+n\bm{v}_{\beta},k):\max(|m|,|n|)\le 32,\, 1\le k\le 6 \}.
	\end{aligned}\right.
	\]
	And the corresponding rectangular matrix is $P_{\mathcal{N}'}(\widetilde{\mathcal{C}}-z\mathcal{M})P_{\mathcal{N}}$. \Cref{fig:super_wave} shows the edge modes in these structures.
	\begin{figure}[h!] 
		\centering
		\includegraphics[width = 0.2\textwidth]{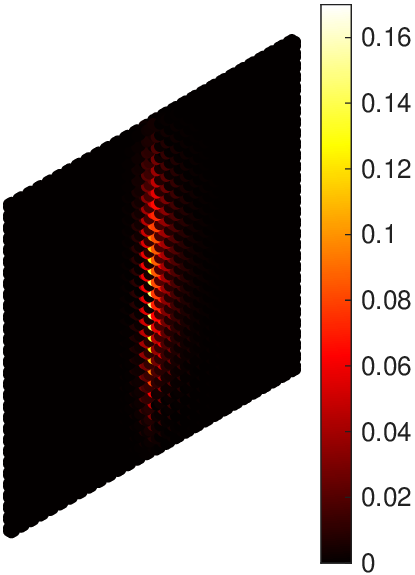}
		\hspace{0.5cm}
		\includegraphics[width = 0.48\textwidth]{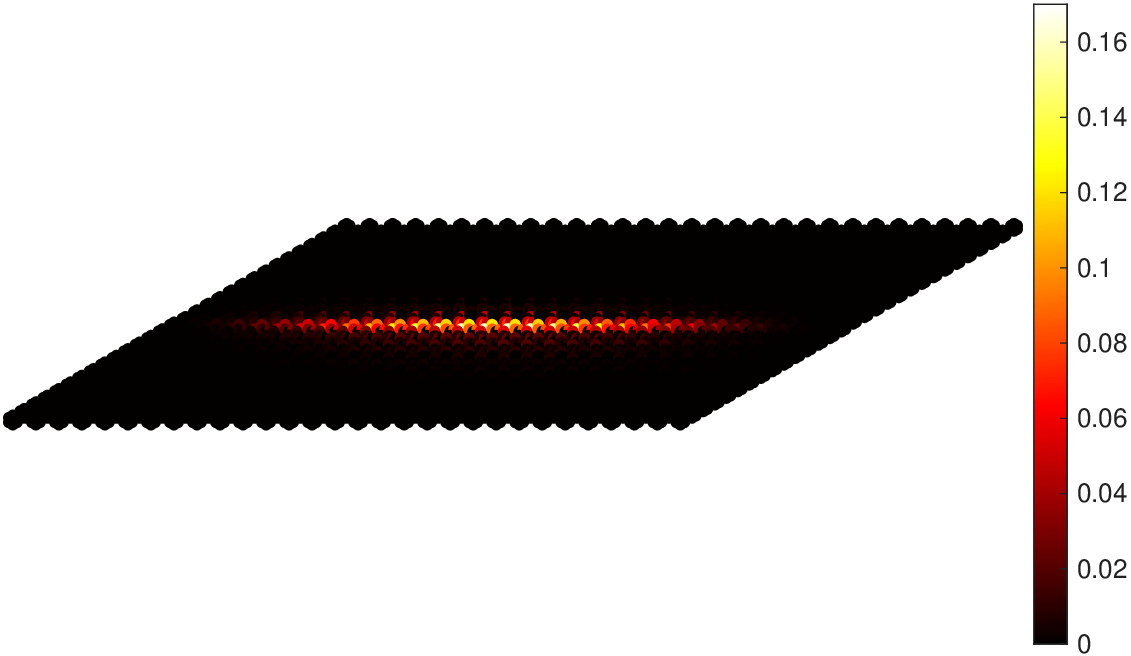}%
		\caption{Interface modes of type-I and type-II in a generalized honeycomb structure. Left panel: an example of interface modes of type-I. Right panel: an example of interface modes of type-II. }
		\label{fig:super_wave}
	\end{figure}
	
	The function $F_{\mathcal{N}}(z)$ defined by \eqref{def:fz} is plotted in \Cref{fig:compare_spectrum}. For comparison with the supercell method, we also plot the quasi-periodic spectrum for each $\alpha$ on the $z$-axis for $z\in\mathbb{R}$, with pseudo-modes being removed. The results show that the patch approximation accurately captures the band gap near the edge eigenmodes, without any spectral pollution.

	\begin{figure}[h!] 
		\centering
		\includegraphics[width = 0.5\textwidth]{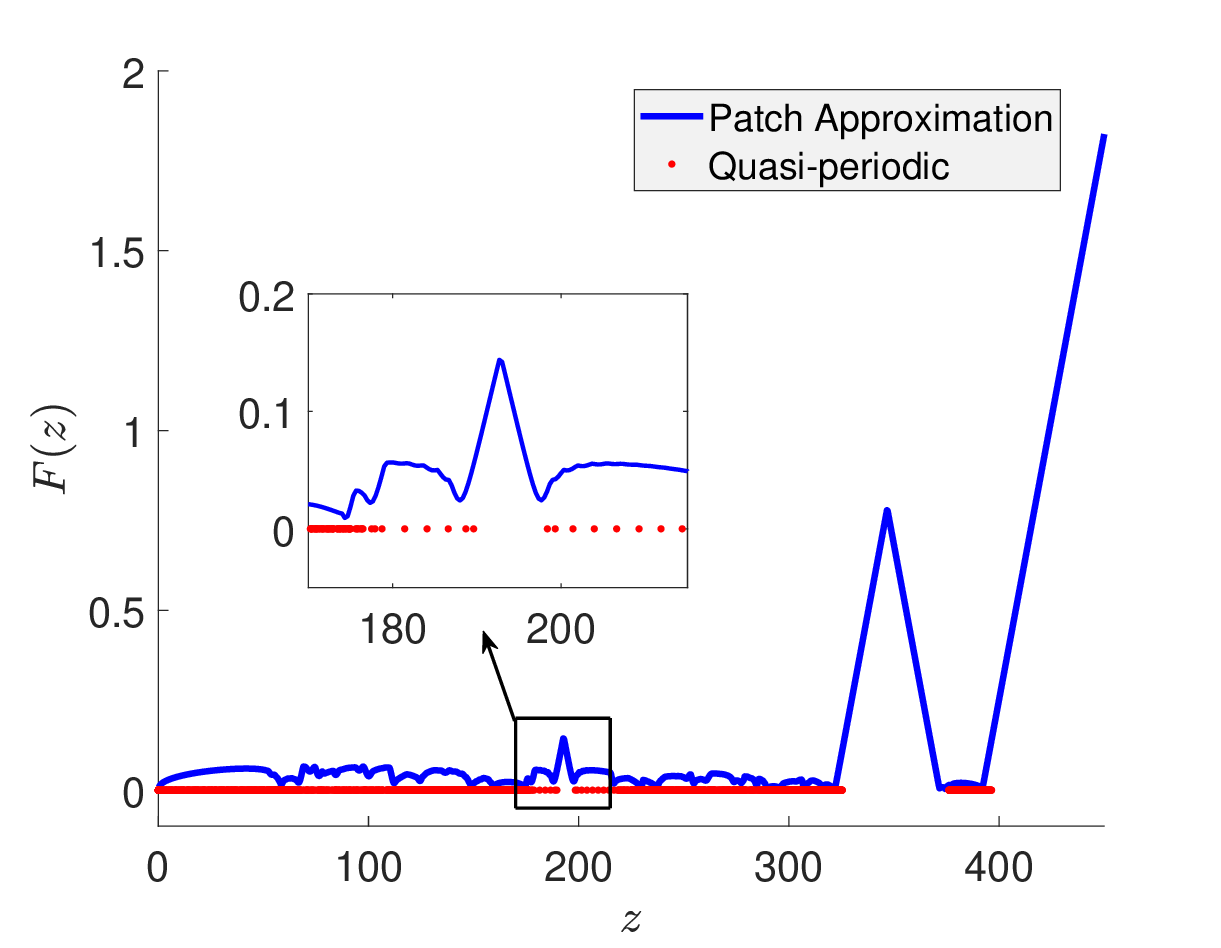}\hspace{-0.5cm}
		\includegraphics[width = 0.5\textwidth]{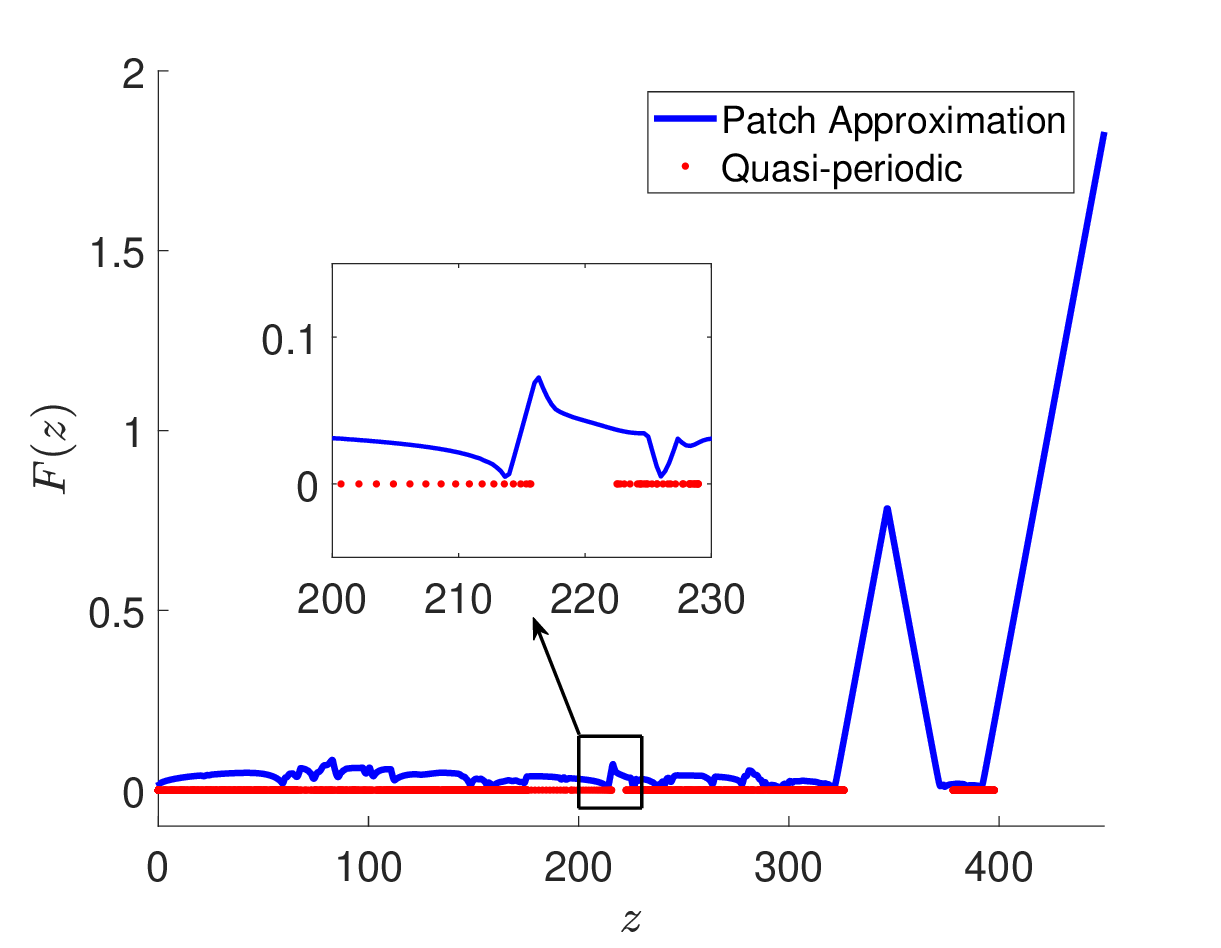}
		\caption{Comparison between the supercell method and the patch approximation for type-I interface and type-II interface. The function $F_{\mathcal{N}}(z)$ is shown together with the quasi-periodic spectrum for each $\alpha$ projected onto the $z$-axis for $z\in\mathbb{R}$, with pseudo-modes being removed. Left panel: type-I interface. Right panel: type-II interface.}
		\label{fig:compare_spectrum}
	\end{figure}

	\crefalias{section}{appendix}
	\crefalias{subsection}{appendix}
	\appendix
	\section{Exterior Dirichlet-to-Neumann map}
	\label{app_d2n}
	
	In this appendix, we prove Proposition \ref{prop_d2n_map}. The idea is to express $\mathcal{T}^{z}$ in terms of the resolvent of the exterior Dirichlet Laplacian. Specifically, we introduce the following operator:
	\begin{equation*}
		\mathcal{L}^{\mathrm{ext}}: H_0^1(\Omega)\subset L^2(\Omega)\to H^{-1}(\Omega),\quad \mathcal{L}^{\mathrm{ext}}u:=-\Delta u.
	\end{equation*}
	In the first part of the proof, we show that there exists $z_0>0$ such that the resolvent $(\mathcal{L}^{\mathrm{ext}}-z)^{-1}$ of $\mathcal{L}^{\mathrm{ext}}$ is well-defined within the disk $\mathbb{D}_0:=\{z\in\mathbb{C}:\, |z|< z_0\}$. Then, in the second part, we show how to write the Dirichlet-to-Neumann map $\mathcal{T}^{z}$ in terms of $(\mathcal{L}^{\mathrm{ext}}-z)^{-1}$, which makes the analyticity and boundedness of $\mathcal{T}^{z}$ clear.
	
	Step 1. To prove that the resolvent $(\mathcal{L}^{\mathrm{ext}}-z)^{-1}$ is well-defined, it suffices to show that
	\begin{equation} \label{eq_d2n_proof_1}
		\int_{\Omega}|\nabla u(\bm{x})|^2\geq z_0 \int_{\Omega}|u(\bm{x})|^2,\quad \forall u\in H_0^1(\Omega),
	\end{equation}
	for some $z_0>0$. Moreover, by the continuity of \eqref{eq_d2n_proof_1} with respect to the $H^1$ norm and the relation $H_0^1(\Omega)=\overline{C_{c}^{\infty}(\Omega)}^{H^1}$, \eqref{eq_d2n_proof_1} is equivalent to
	\begin{equation} \label{eq_d2n_proof_2}
		\int_{\Omega}|\nabla u(\bm{x})|^2\geq z_0 \int_{\Omega}|u(\bm{x})|^2,\quad \forall u\in C_{c}^{\infty}(\Omega).
	\end{equation}
	The validity of \eqref{eq_d2n_proof_2} is rooted in our assumptions on the geometry of resonators, \emph{i.e.}, Assumption \ref{asmp_geometry_assumption}, especially condition \eqref{eq_geometry_assumption_3}. In fact, by the partition of space in conditions \eqref{eq_geometry_assumption_1}-\eqref{eq_geometry_assumption_2}, we only need to prove \eqref{eq_d2n_proof_2} within any translation of polygons:
	\begin{equation} \label{eq_d2n_proof_3}
		\int_{\Omega\cap (Y^{(i)}+\bm{n})}|\nabla u(\bm{x})|^2\geq z_0^{(i)} \int_{\Omega\cap (Y^{(i)}+\bm{n})}|u(\bm{x})|^2,\quad \forall u\in C_{c}^{\infty}(\Omega),\, \bm{n}\in \Sigma^{(i)},
	\end{equation}
	where $z_0^{(i)}>0$ depends only on $i\in\{1,2,\cdots,M_{\mathrm{poly}}\}$. This is where assumption \eqref{eq_geometry_assumption_3} comes into play. In fact, since $(Y^{(i)}+\bm{n})\cap \mathcal{D}\neq \emptyset$, we can select a resonator $D_{\sigma}$ intersecting $Y^{(i)}+\bm{n}$ and evaluate $u(\bm{x})$ using the Newton-Leibniz formula starting from the boundary $\partial D_{\sigma}$. Recalling that $u$ vanishes on $\partial D_{\sigma}$, one can then prove the Poincaré-type estimate \eqref{eq_d2n_proof_3} using a standard argument based on the Schwarz inequality. We omit the details here, which follow the same lines as in \cite[Proposition 2.2, Step 1]{qiu2025nonlinear}.
	
	Step 2. With the result in Step 1 in hand, the uniqueness of a solution to \eqref{eq_d2n_def} within $z\in \mathbb{K}$ is obvious. Now, we explicitly construct a solution to \eqref{eq_d2n_def}, which then shows that the Dirichlet-to-Neumann map $\mathcal{T}^{z}$ is well-defined. Denote the trace operator from $H^1(\Omega)$ to $ H^{1/2}(\partial \Omega)$ by $\mathfrak{R}$. We can construct a right inverse $\mathfrak{E}$ of $\mathfrak{R}$ (lifting operator) as follows:
	\begin{enumerate}
		\item[(i)] take an open neighborhood $O^{(i)}$ of $D^{(i)}$ for each $i$ such that $D^{(i)}\Subset O^{(i)}$ and $O^{(i)}\cap O^{(j)}=\emptyset$ if $i\neq j$;
		\item[(ii)] construct a lifting map $\mathfrak{E}^{(i)}$ from $ H^{1/2}(\partial D^{(i)})$ into $H_0^1(O^{(i)}\backslash D^{(i)})$ following a patching argument, \emph{e.g.}, as in \cite[Theorem 18.18]{Leoni2024};
		\item[(iii)] repeat the same procedure for each translation of the resonator $D_{\sigma}$, and denote the resulting lifting map by $\mathfrak{E}_{\sigma}$. In particular, by  assumption \eqref{eq_geometry_assumption_4}, we take the range of $\mathfrak{E}_{\sigma}$ to be non-overlapping if $\sigma\neq \sigma'$;
		\item[(iv)] add the resulting maps and obtain $\mathfrak{E}:=\sum_{\sigma\in \Sigma}\mathfrak{E}_{\sigma}\circ \mathbbm{1}_{\partial \mathcal{D}_{\sigma}}$.
	\end{enumerate}
	With these preparations, we are now ready to construct the solution to \eqref{eq_d2n_def} using a standard lifting argument. We let
	\begin{equation*}
		f_{z,\phi}:=(\Delta +z)(\mathfrak{E}\phi)\in H^{-1}(\Omega).
	\end{equation*}
	Then we set $g_{z,\phi}:=(\mathcal{L}^{\mathrm{ext}}-z)^{-1}f_{z,\phi}$, which uniquely solves the following problem: 
	\begin{equation*}
		\left\{
		\begin{aligned}
			&-\Delta g_{z,\phi}-z g_{z,\phi}=f_{z,\phi} \quad \text{in } \Omega, \\
			&g_{z,\phi}\big|_{+}=0 \quad \text{on } \partial \Omega,
		\end{aligned}
		\right.
	\end{equation*}
	since the resolvent is well-defined for $\{z:|z|\leq z_0\}$.
	Finally, we let
	\begin{equation} \label{eq_d2n_proof_4}
		u_{z,\phi}:=\mathfrak{E}\phi +g_{z,\phi}=\mathfrak{E}\phi +(\mathcal{L}^{\mathrm{ext}}-z)^{-1}f_{z,\phi},
	\end{equation}
	and see that $u_{z,\phi}$ solves \eqref{eq_d2n_def}.
	
	Step 3. A direct consequence of \eqref{eq_d2n_proof_4} is the following expression for the Dirichlet-to-Neumann map:
	\begin{equation} \label{eq_d2n_proof_5}
		\mathcal{T}^{z}:\quad
		\phi\mapsto \frac{\partial}{\partial\nu}\Big|_{\partial \mathcal{D}}\Big(\mathfrak{E}\phi +(\mathcal{L}^{\mathrm{ext}}-z)^{-1}f_{z,\phi} \Big) .
	\end{equation}
	This, together with the analyticity of the resolvent $(\mathcal{L}^{\mathrm{ext}}-z)^{-1}$ and $f_{z,\phi}$ with respect to $z$, gives the boundedness and analyticity of $\mathcal{T}^{z}$.
	
	\begin{remark} \label{rmk_exp_decay}
		Formula \eqref{eq_d2n_proof_5}, together with the Combes-Thomas estimate of resolvents associated with self-adjoint operators (see, \emph{e.g.}, \cite[Theorem 11.2]{Kirsch2007} or \cite[Proposition 3.1]{Qiu2025}), directly indicates the  off-diagonal exponential decay of the capacitance operator \eqref{eq_cap_operator_def}:
		\begin{equation*}
			\mathcal{C}(\sigma,\sigma')\lesssim \eu^{-\beta\operatorname{dist}(D_{\sigma},D_{\sigma'})},
		\end{equation*}
		where $\beta>0$ is independent of $\sigma,\sigma'$. The key to this point, as is clear in the proof of Proposition \ref{prop_d2n_map}, is that the spectrum of the exterior Dirichlet Laplacian is bounded below by a positive constant, or more fundamentally, the geometric assumption in \eqref{eq_geometry_assumption_3}. In \Cref{apsec:expdecay}, we provide a finer estimate of the off-diagonal decay of the capacitance operator using a heat kernel estimate.
	\end{remark}

	\section{Inverse of the sesquilinear form $\mathfrak{a}$}
	\label{app_inverse_of_form}

	In this appendix, we prove that the left side of \eqref{eq_cont_to_disc_resolvent_converge} is well-defined.
	\begin{proposition} \label{prop_form_resolvent}
		For any $z\notin\mathbb{R}$ with $|\Im z|<1$, there exists $C=C(|z|)>0$ such that, if
		\begin{equation} \label{eq_form_resolvent_delta_cond}
			\delta\in (0,C|\Im z|),
		\end{equation}
		then the variational problem $\mathfrak{a}(u,v;\delta z,\delta)=(f,v)_{L^2(\mathcal{D})}$ has a unique solution $u\in H^1(\mathcal{D})$ for any $f\in L^2(\mathcal{D})$.
	\end{proposition}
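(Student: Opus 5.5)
The plan is to prove unique solvability through the Lax--Milgram lemma, applied to a suitable rotation or combination of the form $\mathfrak{a}(\cdot,\cdot;\delta z,\delta)$ on $H^1(\mathcal{D})$. We split $u=\mathcal{P}u+\mathcal{P}_\perp u$ and get coercivity on each component separately. The cross terms coming from $\mathcal{T}^{\delta z}$ are then absorbed using the smallness of $\delta$ relative to $|\Im z|$.

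\emph{Step 1: boundedness and structure of $\mathcal{T}^{\lambda}$.} By Proposition \ref{prop_d2n_map}, for $|\delta z|<z_0$ the map $\mathcal{T}^{\delta z}$ is bounded from $H^{1/2}(\partial\mathcal{D})$ to $H^{-1/2}(\partial\mathcal{D})$. Its norm is uniform for $\delta|z|\le z_0/2$, and analyticity gives $\|\mathcal{T}^{\delta z}-\mathcal{T}^{0}\|\lesssim \delta|z|$. The trace $H^1(\mathcal{D})\to H^{1/2}(\partial\mathcal{D})$ is bounded uniformly in $\sigma$, since there are only finitely many resonator types. Hence
\[
|\delta(\mathcal{T}^{\delta z}u,v)|\le C\delta\|u\|_{H^1}\|v\|_{H^1}.
\]
From the Dirichlet energy of the exterior problem, $-(\mathcal{T}^{0}\phi,\phi)=\int_\Omega|\nabla w|^2\ge 0$ is real and nonnegative. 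Consequently $\Im(\mathcal{T}^{\delta z}u,u)=O(\delta|z|)\|u\|_{H^1}^2$.

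\emph{Step 2: the coercivity estimate.} Take $v=u$ and consider real and imaginary parts.
\begin{itemize}
\item The imaginary part is $\Im\mathfrak{a}(u,u)=-\delta\Im z\|u\|_{L^2}^2+O(\delta^2|z|)\|u\|_{H^1}^2$.
\item The real part is $\Re\mathfrak{a}(u,u)\ge\|\nabla u\|^2-\delta|\Re z|\|u\|^2-\delta\Re(\mathcal{T}^{\delta z}u,u)$. Since $-\Re(\mathcal{T}^{0}u,u)\ge0$, this gives $\Re\mathfrak{a}(u,u)\ge\|\nabla u\|^2-\delta|z|\|u\|^2-C\delta^2|z|\|u\|_{H^1}^2$.
\end{itemize}
Form $\Re\mathfrak{a}(u,u)-\theta\,\Im\mathfrak{a}(u,u)\cdot\operatorname{sgn}(\Im z)$ with $\theta=2|z|/|\Im z|$. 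The negative $\delta|z|\|u\|^2$ term is then dominated by $\theta\delta|\Im z|\|u\|^2=2\delta|z|\|u\|^2$. What remains is the lower bound
\[
\|\nabla u\|^2+\delta|z|\|u\|^2-C\delta^2|z|(1+\theta)\|u\|_{H^1}^2 .
\]
The coercive part is only of size $\delta$ in the $L^2$ direction. The error $C\delta^2|z|(1+\theta)\|u\|^2$ must therefore be smaller than $\delta|z|\|u\|^2$, which requires $\delta\theta\lesssim1$, i.e. $\delta\lesssim|\Im z|/|z|$. This is exactly condition \eqref{eq_form_resolvent_delta_cond} with $C=C(|z|)$. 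The gradient error is absorbed into $\|\nabla u\|^2$ for small $\delta$.

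\emph{Step 3: conclusion.} Step 2 yields $|\mathfrak{a}(u,u)|\gtrsim \delta\min(1,|z|)\|u\|_{H^1}^2$, a $\delta$-dependent but positive coercivity constant. Boundedness of $\mathfrak{a}$ follows from Step 1. Lax--Milgram, applied to the form $(1-\iu\theta\operatorname{sgn}\Im z)\mathfrak{a}$ (whose real part is exactly the combination above), gives existence and uniqueness of $u\in H^1(\mathcal{D})$ for every $f\in L^2(\mathcal{D})$.

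\emph{Main obstacle.} The main difficulty is controlling $\Im(\mathcal{T}^{\delta z}u,u)$ uniformly over the infinite, non-periodic configuration. This needs the uniform bound $\|\mathcal{T}^{\lambda}-\mathcal{T}^0\|\lesssim|\lambda|$. To obtain it, I would differentiate formula \eqref{eq_d2n_proof_5} in $z$ using the resolvent bound $\|(\mathcal{L}^{\mathrm{ext}}-z)^{-1}\|\le (z_0-|z|)^{-1}$ from Step 1 of Appendix \ref{app_d2n}. The lifting $\mathfrak{E}$ is uniformly bounded because it is built from finitely many reference liftings. The case $|z|$ small would be handled by the same argument, since only $|\Im z|>0$ and $\delta|z|<z_0/2$ are used.
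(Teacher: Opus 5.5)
Your proof is correct and is essentially the paper's argument. You test with $v=u$ and multiply $\mathfrak{a}$ by a complex factor; the paper uses $\iu\eu^{\iu\theta}$ with small $\theta<0$, which is proportional to $1+\iu\cot|\theta|$, the same family as your multiplier. The $\delta|\Im z|\,\|u\|_{L^2(\mathcal{D})}^2$ term then dominates and absorbs $\Im(\mathcal{T}^{\delta z}[u],u)=\Im((\mathcal{T}^{\delta z}-\mathcal{T}^{0})[u],u)=O(\delta)\|u\|_{H^1(\mathcal{D})}^2$ under $\delta\lesssim|\Im z|$. Your use of $-(\mathcal{T}^{0}[u],u)\ge 0$ is slightly cleaner than the paper's crude bound on $\Re(\mathcal{T}^{\delta z}[u],u)$.

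There is one sign slip to fix. The factor whose real part is $\Re\mathfrak{a}-\theta\operatorname{sgn}(\Im z)\Im\mathfrak{a}$ is $1+\iu\theta\operatorname{sgn}(\Im z)$, not $1-\iu\theta\operatorname{sgn}(\Im z)$. Also, the $\mathcal{P}/\mathcal{P}_\perp$ splitting announced in your opening paragraph is never used, and it is not needed.
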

	We briefly remark on the main idea of proof before showing the details. Recall the definition of $\mathfrak{a}$:
	\begin{equation*}
		\mathfrak{a}(u,v;\delta z,\delta)
		=(\nabla u,\nabla v)_{(L^2(\mathcal{D}))^2} -\delta z(u,v)_{L^2(\mathcal{D})} -\delta (\mathcal{T}^{\delta z}[u],v)_{L^2(\partial \mathcal{D})} .
	\end{equation*}
	If $\mathcal{T}^{\delta z}$ is replaced by $\mathcal{T}^{0}$, then the conclusion follows directly from the assumption $z\notin \mathbb{R}$ by noting that the operator realization of the form $(\nabla u,\nabla v)_{(L^2(\mathcal{D}))^2}  -\delta (\mathcal{T}^{0}[u],v)_{L^2(\partial \mathcal{D})}$ is self-adjoint because $(\mathcal{T}^{0}[u],v)_{L^2(\partial \mathcal{D})}$ is symmetric. In addition, in that case, the upper bound in \eqref{eq_form_resolvent_delta_cond} is not required. Nevertheless, the $\delta$-dependence of $\mathcal{T}^{\delta z}$ can be controlled. Indeed, the imaginary part of $\delta (\mathcal{T}^{\delta z}[u],u)_{L^2(\partial \mathcal{D})}$ arises entirely from the difference $\mathcal{T}^{\delta z}-\mathcal{T}^{0}$ and is of higher order than $\Im (\delta z)$:
	\begin{equation*}
		\Im \Big[\delta (\mathcal{T}^{\delta z}[u],u)_{L^2(\partial \mathcal{D})}\Big]
		=\delta \Im\Big[ \big((\mathcal{T}^{\delta z}-\mathcal{T}^{0})[u],u\big)_{L^2(\partial \mathcal{D})}\Big] = \mathcal{O}(\delta^2),
	\end{equation*}
	thanks to the analyticity of $\mathcal{T}^{z}$. Hence, this imaginary part can be absorbed into $\Im (\delta z)$ by appropriately controlling the size of $\delta$, as indicated in \eqref{eq_form_resolvent_delta_cond}, and then the invertibility of $\mathfrak{a}(u,v;\delta z,\delta)$ follows.
	
	\begin{proof}[Proof of Proposition \ref{prop_form_resolvent}]
		Write $z=|z|\eu^{\iu\phi}$ and suppose that $\phi\in (0,\frac{\pi}{2})$ without loss of generality. Define
		\begin{equation*}
			\begin{aligned}
				\mathfrak{b}(u,v;\delta z,\delta)&=\iu\eu^{\iu\theta}\mathfrak{a}(u,v;\delta z,\delta) \\
				&=\iu\eu^{\iu\theta}(\nabla u,\nabla v)_{(L^2(\mathcal{D}))^2} -\iu\delta z \eu^{\iu\theta} (u,v)_{L^2(\mathcal{D})} -\iu\delta \eu^{\iu\theta} (\mathcal{T}^{\delta z}[u],v)_{L^2(\partial \mathcal{D})}
			\end{aligned}
		\end{equation*}
		with $\theta\in (-\frac{\pi}{2},\frac{\pi}{2})$ to be determined. Then the invertibility of $\mathfrak{a}$ is equivalent to that of $\mathfrak{b}$. Therefore, it suffices to show that
		\begin{equation} \label{eq_form_resolvent_proof_1}
			\Re \mathfrak{b}(u,u;\delta z,\delta)\geq c\|u\|_{H^1(\mathcal{D})}^2,\quad \forall u\in H^1(\mathcal{D}),
		\end{equation}
		for some $c>0$. One calculates that
		\begin{equation} \label{eq_form_resolvent_proof_2}
			\begin{aligned}
				\Re \mathfrak{b}(u,u;\delta z,\delta)&=-\sin(\theta)\int_{\mathcal{D}}|\nabla u|^2+\delta|z|\sin(\phi+\theta)\int_{\mathcal{D}}|u|^2 \\
				&\quad +\delta\sin(\theta)\Re (\mathcal{T}^{\delta z}[u],u)_{L^2(\partial \mathcal{D})}+\delta\cos(\theta)\Im (\mathcal{T}^{\delta z}[u],u)_{L^2(\partial \mathcal{D})}.
			\end{aligned}
		\end{equation}
		Since $(\mathcal{T}^{0}[u],u)\in\mathbb{R}$ and $\mathcal{T}^{z}$ is analytic in $z$, 
		we have
		\begin{equation} \label{eq_form_resolvent_proof_7}
			\big|\Re (\mathcal{T}^{\delta z}[u],u)_{L^2(\partial \mathcal{D})}\big|\leq \|\mathcal{T}^{\delta z}\|_{\mathcal{B}(H^{1/2}(\partial \mathcal{D}), H^{-1/2}(\partial \mathcal{D}))}\|u\|^2_{H^{1/2}(\partial \mathcal{D})} \leq C_1\|u\|^2_{H^1(\mathcal{D})}
		\end{equation}
		and 
		\begin{equation} \label{eq_form_resolvent_proof_8}
			\big|\Im (\mathcal{T}^{\delta z}[u],u)_{L^2(\partial \mathcal{D})}\big|
			=\big|\Im \big((\mathcal{T}^{\delta z}-\mathcal{T}^{0})[u],u\big)_{L^2(\partial \mathcal{D})}\big|\leq C_2 \delta\|u\|^2_{H^1(\mathcal{D})},
		\end{equation}
		where $C_1,C_2>0$ are independent of $z$ and $\delta$, given that $\delta \, |z|$ is sufficiently small (this poses a restriction on $\delta$ which only depends on $|z|$). Hence, the right side of \eqref{eq_form_resolvent_proof_2} is estimated as
		\begin{equation} \label{eq_form_resolvent_proof_3}
			\begin{aligned}
				\Re \mathfrak{b}(u,u;\delta z,\delta)&\geq \Big[ -\sin(\theta)+C_1\delta \sin(\theta)-C_2\delta^2\cos(\theta) \Big]\int_{\mathcal{D}}|\nabla u|^2 \\
				&\quad +\delta\Big[ |z|\sin(\phi+\theta)+C_1\sin(\theta)-C_2\delta\cos(\theta) \Big]\int_{\mathcal{D}}| u|^2 .
			\end{aligned}
		\end{equation}
		We set $\delta<\frac{1}{2C_1}$, and select $\theta<0$ with 
		\begin{equation} \label{eq_form_resolvent_proof_4}
			|\theta|<\min\{\frac{1}{2}\phi,\sin^{-1}\big(\frac{1}{4C_1}\Im z\big) \},
		\end{equation}
		which guarantees that
		\begin{equation*}
			-\sin(\theta)+C_1\delta \sin(\theta)\geq \frac{1}{2}\sin(-\theta)>0,
		\end{equation*}
		and
		\begin{equation*}
			|z|\sin(\phi+\theta)+C_1\sin(\theta)\geq \frac{1}{2}|z|\sin(\frac{1}{2}\phi)>0.
		\end{equation*}
		Thus, \eqref{eq_form_resolvent_proof_3} is further estimated as
		\begin{equation} \label{eq_form_resolvent_proof_5}
			\begin{aligned}
				\Re \mathfrak{b}(u,u;\delta z,\delta)&\geq \Big[ \frac{1}{2}\sin(-\theta)-C_2\delta^2\cos(\theta) \Big]\int_{\mathcal{D}}|\nabla u|^2 \\
				&\quad +\delta\Big[ \frac{1}{2}|z|\sin(\frac{1}{2}\phi)-C_2\delta\cos(\theta) \Big]\int_{\mathcal{D}}| u|^2 .
			\end{aligned}
		\end{equation}
		By further restricting $\delta$ so that
		\begin{equation} \label{eq_form_resolvent_proof_6}
			\frac{1}{2}\sin(-\theta)-C_2\delta^2\geq \frac{1}{4}\sin(-\theta),\quad \frac{1}{2}|z|\sin(\frac{1}{2}\phi)-C_2\delta\geq \frac{1}{4}|z|\sin(\frac{1}{2}\phi),
		\end{equation}
		which concludes the proof of \eqref{eq_form_resolvent_proof_1}. We note that, in light of \eqref{eq_form_resolvent_proof_4}, \eqref{eq_form_resolvent_proof_6} poses a restriction $\delta<C|\Im z|$ with $C=C(|z|)$ depending only on the amplitude $|z|$, which actually absorbs the restrictions that have emerged in \eqref{eq_form_resolvent_proof_7} and \eqref{eq_form_resolvent_proof_8} since we have assumed that $|\Im z|<1$.

	\end{proof}
	
	\section{Norm resolvent convergence: Proof of Theorem \ref{thm_cont_to_disc_resolvent_converge}} \label{sec_resolvent_convergence}
	
	In this appendix, we prove Theorem \ref{thm_cont_to_disc_resolvent_converge} following the idea outlined in Section \ref{sec_heuristics}. Throughout, we work in the subwavelength scaling $\lambda=\delta z$ with $z\in \mathbb{K}$. A key to the proof is the regularity estimate of the solution to the variational problem $\mathfrak{a}(u,v;\delta z,\delta)=(f,v)_{L^2(\mathcal{D})}$, presented in \Cref{sec_regularity_estimate}, with which we prove Theorem \ref{thm_cont_to_disc_resolvent_converge} in \Cref{sec_resolvent_convergence_proof}.
	
	\subsection{Regularity estimate of the solution to the variational problem}
	\label{sec_regularity_estimate}
	
	The following regularity estimate will be frequently applied.
	\begin{lemma} \label{lem_regularity_sesq_solution}
		Let $\mathbb{K}\subset \mathbb{C}$ be a compact region such that $\mathbb{K}\cap \mathbb{R}=\emptyset$. Then, there exists $\delta_0=\delta_0(|\mathbb{K}|)>0$ (depending only on $|\mathbb{K}|:=\sup_{z\in\mathbb{K}}|z|$) such that, for any $z\in \mathbb{K}$, $0<\delta<\delta_0$ and $f\in L^2(\mathcal{D})$, the solution $u$ to the variational problem $\mathfrak{a}(u,v;\delta z,\delta)=(f,v)_{L^2(\mathcal{D})}$ satisfies the following estimates:
		\begin{equation} \label{eq_regularity_sesq_solution_perp}
			\|\mathcal{P}_{\perp}u\|_{H^1(\mathcal{D})}\le  C_1\Big[\|\mathcal{P}_{\perp}f\|_{L^2(\mathcal{D})}+\delta \|\mathcal{P} u\|_{H^1(\mathcal{D})}\Big],
		\end{equation}
		and 
		\begin{gather}\label{eq_regularity_sesq_solution_parallel}
			\begin{aligned}
				\|\mathcal{P} u\|_{H^1(\mathcal{D})}&\le  \frac{\|(\mathcal{C}-z)^{-1}\|_{\mathcal{B}(\ell^2(\Sigma))}}{\delta\big(1-C_2\delta \|(\mathcal{C}-z)^{-1}\|_{\mathcal{B}(\ell^2(\Sigma))} \big)}\|\mathcal{P} f\|_{L^2(\mathcal{D})}\\
				&\quad+\frac{C_2\|(\mathcal{C}-z)^{-1}\|_{\mathcal{B}(\ell^2(\Sigma))}}{1-C_2\delta \|(\mathcal{C}-z)^{-1}\|_{\mathcal{B}(\ell^2(\Sigma))}}\|\mathcal{P}_{\perp} u\|_{H^1(\mathcal{D})},	
			\end{aligned} 
		\end{gather}
		where $C_i=C_i(|\mathbb{K}|)>0$ depends only on $|\mathbb{K}|$ ($i=1,2$).
	\end{lemma}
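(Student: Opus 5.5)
We test the variational identity $\mathfrak{a}(u,v;\delta z,\delta)=(f,v)_{L^2(\mathcal{D})}$ against the two orthogonal pieces of $u=\mathcal{P}u+\mathcal{P}_\perp u$ separately. Two elementary facts are used throughout. First, $\nabla \mathcal{P}u=0$ in $\mathcal{D}$, so $\|\mathcal{P}u\|_{H^1(\mathcal{D})}=\|\mathcal{P}u\|_{L^2(\mathcal{D})}=\|Pu\|_{\ell^2(\Sigma)}$. Second, by the Poincaré--Wirtinger inequality on each resonator (only finitely many reference shapes occur, so the constant is uniform), $\|\mathcal{P}_\perp w\|_{L^2(D_\sigma)}\le C_P\|\nabla w\|_{L^2(D_\sigma)}$. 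Hence $\|\nabla \mathcal{P}_\perp u\|^2\ge c\|\mathcal{P}_\perp u\|_{H^1}^2$. We also use the uniform trace bound $\|w\|_{H^{1/2}(\partial\mathcal{D})}\lesssim \|w\|_{H^1(\mathcal{D})}$ and the uniform bound on $\mathcal{T}^{\delta z}$ from Proposition \ref{prop_d2n_map}. The trace bound holds resonator by resonator, and the $\ell^2$-summation is fine since the resonators are uniformly separated.

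\textbf{Estimate \eqref{eq_regularity_sesq_solution_perp}.} Take $v=\mathcal{P}_\perp u$. Since $\mathcal{P}u$ is piecewise constant, $(\nabla u,\nabla v)=\|\nabla\mathcal{P}_\perp u\|^2$, and $(u,v)_{L^2}=\|\mathcal{P}_\perp u\|^2$ by orthogonality. This gives
\[
\|\nabla \mathcal{P}_\perp u\|^2-\delta z\|\mathcal{P}_\perp u\|^2-\delta(\mathcal{T}^{\delta z}[\mathcal{P}_\perp u],\mathcal{P}_\perp u)-\delta(\mathcal{T}^{\delta z}[\mathcal{P}u],\mathcal{P}_\perp u)=(\mathcal{P}_\perp f,\mathcal{P}_\perp u).
\]
Take real parts. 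The two $\delta$-terms involving only $\mathcal{P}_\perp u$ are bounded by $C\delta|\mathbb{K}|\,\|\mathcal{P}_\perp u\|_{H^1}^2$ and are absorbed into $c\|\mathcal{P}_\perp u\|^2_{H^1}$ once $\delta<\delta_0(|\mathbb{K}|)$. The cross term is bounded by $C\delta\|\mathcal{P}u\|_{H^1}\|\mathcal{P}_\perp u\|_{H^1}$. Dividing by $\|\mathcal{P}_\perp u\|_{H^1}$ yields \eqref{eq_regularity_sesq_solution_perp}. No use of $\Im z\neq0$ is needed here: the coercivity on the range of $\mathcal{P}_\perp$ comes for free.

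\textbf{Estimate \eqref{eq_regularity_sesq_solution_parallel}.} Take $v=\mathbbm{1}_{D_{\sigma'}}$ for each $\sigma'$. The gradient term vanishes, and $(u,\mathbbm{1}_{D_{\sigma'}})=Pu(\sigma')$. Split $\mathcal{T}^{\delta z}=\mathcal{T}^0+(\mathcal{T}^{\delta z}-\mathcal{T}^0)$ and $u=\mathcal{P}u+\mathcal{P}_\perp u$. By the definition \eqref{eq_cap_operator_def}, $-(\mathcal{T}^0[\mathcal{P}u],\mathbbm{1}_{D_{\sigma'}})=(\mathcal{C}Pu)(\sigma')$; here one should check that the pairing with $\mathcal{T}^0$ is symmetric so the index convention matches. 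We obtain
\[
\delta(\mathcal{C}-z)Pu = Pf + \delta\, r,\qquad r(\sigma'):=\big((\mathcal{T}^{\delta z}-\mathcal{T}^0)[\mathcal{P}u]+\mathcal{T}^{\delta z}[\mathcal{P}_\perp u],\mathbbm{1}_{\partial D_{\sigma'}}\big).
\]
By analyticity of $z\mapsto\mathcal{T}^z$, the first contribution to $r$ is $O(\delta|\mathbb{K}|)\|\mathcal{P}u\|_{H^1}$. The second is $O(\|\mathcal{P}_\perp u\|_{H^1})$.

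\textbf{Main obstacle.} The key point is to justify that these bounds hold in $\ell^2(\Sigma)$, i.e.\ that the map $w\mapsto\{(\mathcal{T}^{\lambda}w,\mathbbm{1}_{\partial D_{\sigma'}})\}_{\sigma'}$ is bounded from $H^{1/2}(\partial\mathcal{D})$ to $\ell^2(\Sigma)$, uniformly for $\lambda\in\mathbb{D}_0$. I would argue by duality. The map is the adjoint-type pairing of $\mathcal{T}^\lambda w\in H^{-1/2}(\partial\mathcal{D})$ with $\sum_{\sigma'}a_{\sigma'}\mathbbm{1}_{\partial D_{\sigma'}}$, whose $H^{1/2}$ norm is $\lesssim\|a\|_{\ell^2}$ because the resonators are disjoint with uniformly bounded geometry. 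The $\ell^2$-norm of the sequence is then at most $\|\mathcal{T}^\lambda w\|_{H^{-1/2}}$ times this lift constant. The analyticity estimate $\|\mathcal{T}^{\delta z}-\mathcal{T}^0\|\le C\delta|z|$ follows from Cauchy's formula on $\mathbb{D}_0$ together with the continuity on $\overline{\mathbb{D}_0}$.

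Applying $(\mathcal{C}-z)^{-1}$ gives
\[
\|Pu\|\le \delta^{-1}\|(\mathcal{C}-z)^{-1}\|\,\|Pf\|+C_2\|(\mathcal{C}-z)^{-1}\|\big(\delta\|\mathcal{P}u\|_{H^1}+\|\mathcal{P}_\perp u\|_{H^1}\big).
\]
Moving the $\delta\|\mathcal{P}u\|$ term to the left gives \eqref{eq_regularity_sesq_solution_parallel}, using $\|\mathcal{P}f\|_{L^2}=\|Pf\|_{\ell^2}$.
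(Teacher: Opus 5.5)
Your proposal is correct and follows essentially the same route as the paper. For \eqref{eq_regularity_sesq_solution_perp} you test with $v=\mathcal{P}_{\perp}u$ and use the uniform Poincar\'e--Wirtinger coercivity, absorbing the $\mathcal{O}(\delta)$ terms. For \eqref{eq_regularity_sesq_solution_parallel} you test with $v=\mathbbm{1}_{D_{\sigma}}$ to obtain $\delta(\mathcal{C}-z)Pu=Pf+\delta r$ and then invert $\mathcal{C}-z$. Your duality argument for the $\ell^2(\Sigma)$-boundedness of the boundary pairing usefully justifies a step the paper only asserts (the boundedness of $P_{\partial}$). Also, the index convention in \eqref{eq_cap_operator_def} already matches directly, so symmetry of $\mathcal{T}^0$ is needed only to guarantee $z\in\rho(\mathcal{C})$ through self-adjointness.
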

	
	Intuitively speaking, the idea of Lemma \ref{lem_regularity_sesq_solution} is to bound the component $\mathcal{P}_{\perp} u$ using the Poincaré inequality, while the constant part $\mathcal{P} u$ is bounded using the resolvent of the capacitance operator.
	
	\begin{proof}[Proof of Lemma \ref{lem_regularity_sesq_solution}]
		We first prove \eqref{eq_regularity_sesq_solution_perp}. Taking $v=\mathcal{P}_{\perp} u$ in $\mathfrak{a}(u,v;\delta z,\delta)=(f,v)_{L^2(\mathcal{D})}$ and using the orthogonality relation
		\[
		(w_1,\mathcal{P}_{\perp}w_2)_{L^2(\mathcal{D})}=(\mathcal{P}_{\perp} w_1,\mathcal{P}_{\perp}w_2)_{L^2(\mathcal{D})} ,\quad \forall w_1,w_2\in H^1(\mathcal{D})),
		\] 
		we obtain
		\begin{equation} \label{eq_regularity_sesq_solution_proof_1}
			\begin{aligned}
				&\int_{\mathcal{D}}|\nabla\mathcal{P}_{\perp} u|^2\,\du x -\delta z\int_{\mathcal{D}}|\mathcal{P}_{\perp} u|^2\,\du x -\delta (\mathcal{T}^{\delta z}[\mathcal{P}_{\perp} u],\mathcal{P}_{\perp} u)_{L^2(\partial \mathcal{D})} \\
				&=(\mathcal{P}_{\perp}f,\mathcal{P}_{\perp}u)_{L^2(\mathcal{D})}+\delta (\mathcal{T}^{\delta z}[\mathcal{P} u],\mathcal{P}_{\perp} u)_{L^2(\partial \mathcal{D})} .
			\end{aligned}
		\end{equation}
		Note that the left side is coercive for small $\delta$. Since $\mathcal{D}$ is the union of finitely many types of resonators, the Poincaré inequality on each $D_{\sigma}$ and summation over $\sigma\in \Sigma$ yield
		\begin{equation} \label{eq_regularity_sesq_solution_proof_2}
			\|\nabla\mathcal{P}_{\perp} u\|^2_{L^2(\mathcal{D})} \ge  C\|\mathcal{P}_{\perp} u\|_{H^1(\mathcal{D})}^2,
		\end{equation}
		for some $C>0$ depending only on the geometry. The constant is uniform because only finitely many reference resonator shapes occur. Next, let
		\[
		M_0:=\sup_{|z|<z_0}\|\mathcal{T}^{z}\|_{\mathcal{B}(H^{1/2}(\partial\mathcal{D}), H^{-1/2}(\partial\mathcal{D}))}
		\|\mathfrak{R}\|_{\mathcal{B}(H^1(\mathcal{D}), H^{1/2}(\partial\mathcal{D}))}^2,
		\]
		where $\mathfrak{R}:H^1(\mathcal{D})\to H^{1/2}(\partial \mathcal{D})$ denotes the trace operator and $z_0$ is given in Proposition \ref{prop_d2n_map}. We choose $\delta_0$ satisfying the following restrictions:
		\begin{gather} \label{eq_regularity_sesq_solution_proof_3}
			\begin{aligned}
				\delta_0<1,\quad \delta_0|\mathbb{K}|\le \min\{\frac{C}{4},z_0\},\quad \delta_0M_0 \le  \frac{C}{4}.
			\end{aligned}
		\end{gather}
		Hence, for any $z\in\mathbb{K}$ and $\delta<\delta_0$, using \eqref{eq_regularity_sesq_solution_proof_2}, the boundedness of $\mathcal{T}^{\delta z}$, and the trace estimate, we have
		\begin{equation} \label{eq_regularity_sesq_solution_proof_4}
			\begin{aligned}
				&\bigg|\int_{\mathcal{D}}|\nabla\mathcal{P}_{\perp} u|^2\,\du x -\delta z\int_{\mathcal{D}}|\mathcal{P}_{\perp} u|^2\, \du x -\delta (\mathcal{T}^{\delta z}[\mathcal{P}_{\perp} u],\mathcal{P}_{\perp} u)_{L^2(\partial \mathcal{D})}\bigg|\\ &\quad\ge  C\|\mathcal{P}_{\perp} u\|_{H^1(\mathcal{D})}^2 - \delta |\mathbb{K}|\| \mathcal{P}_{\perp} u \|_{H^1(\mathcal{D})}^2 - \delta M_0 \|\mathcal{P}_{\perp} u\|_{H^1(\mathcal{D})}^2\ge \frac{C}{2}\|\mathcal{P}_{\perp} u\|_{H^1(\mathcal{D})}^2 .
			\end{aligned}
		\end{equation}
		This leads to 
		\[
		\frac{C}{2}\|\mathcal{P}_{\perp}u\|_{H^1(\mathcal{D})}^2
		\le  \|\mathcal{P}_{\perp}f\|_{L^2(\mathcal{D})}\|\mathcal{P}_{\perp}u\|_{H^1(\mathcal{D})}
		+\delta M_0 \|\mathcal{P} u\|_{H^1(\mathcal{D})}\|\mathcal{P}_{\perp}u\|_{H^1(\mathcal{D})},
		\]
		and hence gives \eqref{eq_regularity_sesq_solution_perp} after division by $\|\mathcal{P}_{\perp}u\|_{H^1(\mathcal{D})}$ when this quantity is nonzero.
		The constant $C_1$ is given by
		\begin{equation*}
			C_1=\frac{2}{C}\max\big\{1,M_0\big\}.
		\end{equation*}
		
		We next prove \eqref{eq_regularity_sesq_solution_parallel}. The idea is similar to the first part, but now we take the test function in the space $\operatorname{Ran}\mathcal{P}$. Specifically, taking $v=\mathbbm{1}_{D_{\sigma}}$ in $\mathfrak{a}(u,v;\delta z,\delta)=(f,v)_{L^2(\mathcal{D})}$, we have
		\begin{equation} \label{eq_regularity_sesq_solution_proof_5}
			\begin{aligned}
				-\delta z&\int_{D_{\sigma}}\mathcal{P} u\,\du x -\delta \int_{\partial D_{\sigma}}\mathcal{T}^{0}[\mathcal{P} u]\,\du \sigma(y) \\
				&=\int_{D_{\sigma}}\mathcal{P} f\,\du x +\delta \int_{\partial D_{\sigma}}\mathcal{T}^{\delta z}[\mathcal{P}_{\perp} u]\,\du \sigma(y) +\delta \int_{\partial D_{\sigma}}(\mathcal{T}^{\delta z}-\mathcal{T}^{0})[\mathcal{P} u]\,\du \sigma(y).
			\end{aligned}
		\end{equation}
		Recalling the definition of the capacitance operator and using that \eqref{eq_regularity_sesq_solution_proof_5} holds for any $\sigma\in \Sigma$, we rewrite it as 
		\begin{equation} \label{eq_regularity_sesq_solution_proof_6}
			\delta(\mathcal{C}-z)Pu=P\mathcal{P} f +\delta P_{\partial}\big(\mathcal{T}^{\delta z}[\mathcal{P}_{\perp} u] +(\mathcal{T}^{\delta z}-\mathcal{T}^{0})[\mathcal{P} u]\big),
		\end{equation}
		where the projection onto the boundary $P_{\partial}:H^{-1/2}(\partial\mathcal{D})\to \ell^2(\Sigma)$ is defined by
		\begin{equation*}
			(P_{\partial }u)(\sigma):=\frac{1}{|\partial D_{\sigma}|}\int_{\partial D_{\sigma}}u\,\du \sigma(y) .
		\end{equation*}
		Since there are only finitely many resonator types, $P_{\partial}$ is bounded. Moreover, by Proposition \ref{prop_d2n_map}, the map $z\mapsto \mathcal{T}^{z}$ is analytic for $|z|<z_0$, there exists a constant $C_2=C_2(|\mathbb{K}|)$ for sufficiently small $\delta_0$ such that
		\begin{gather*}
			\begin{aligned}
				\sup_{z\in\mathbb{K},\,0<\delta<\delta_0}
				\|\mathcal{T}^{\delta z}\|_{H^{1/2}(\partial\mathcal{D})\to H^{-1/2}(\partial\mathcal{D})}
				\le  C_2,\\
				\sup_{z\in\mathbb{K},\,0<\delta<\delta_0}
				\|\mathcal{T}^{\delta z}-\mathcal{T}^{0}\|_{H^{1/2}(\partial\mathcal{D})\to H^{-1/2}(\partial\mathcal{D})}
				\le  C_2\delta.
			\end{aligned}
		\end{gather*}
		Using these bounds together with the trace estimate, we obtain from \eqref{eq_regularity_sesq_solution_proof_6} that
		\begin{equation} \label{eq_regularity_sesq_solution_proof_7}
			\begin{aligned}
				&\|P\mathcal{P} f+\delta P_{\partial}\big(\mathcal{T}^{\delta z}[\mathcal{P}_{\perp}u]+(\mathcal{T}^{\delta z}-\mathcal{T}^{0})[\mathcal{P} u]\big)\|_{\ell^2(\Sigma)} \\
				&\le  \|\mathcal{P} f\|_{L^2(\mathcal{D})}
				+ C_2\delta\big(\|\mathcal{P}_{\perp}u\|_{H^1(\mathcal{D})}+\delta\|\mathcal{P} u\|_{H^1(\mathcal{D})}\big).
			\end{aligned}
		\end{equation}
		Since $\mathcal{C}$ is self-adjoint, we have for $z\in \rho(\mathcal{C})$,
		\begin{equation} \label{eq_regularity_sesq_solution_proof_8}
			\|\delta(\mathcal{C}-z)Pu\|_{\ell^2(\Sigma)}
			\ge  \delta\|(\mathcal{C}-z)^{-1}\|_{\mathcal{B}(\ell^2(\Sigma))}^{-1}\|Pu\|_{\ell^2(\Sigma)}
			=
			\delta\|(\mathcal{C}-z)^{-1}\|_{\mathcal{B}(\ell^2(\Sigma))}^{-1}\|\mathcal{P} u\|_{H^1(\mathcal{D})},
		\end{equation}
		where, in the last step, we have used that $\mathcal{P} u$ is piecewise constant on each $D_{\sigma}$ and $|D_{\sigma}|=1$ for all $\sigma\in\Sigma$. Combining \eqref{eq_regularity_sesq_solution_proof_6}, \eqref{eq_regularity_sesq_solution_proof_7}, and \eqref{eq_regularity_sesq_solution_proof_8}, we arrive at
		\[
		\delta\|(\mathcal{C}-z)^{-1}\|^{-1}\|\mathcal{P} u\|_{H^1(\mathcal{D})}
		\le  \|\mathcal{P} f\|_{L^2(\mathcal{D})}
		+ C_2\delta\|\mathcal{P}_{\perp}u\|_{H^1(\mathcal{D})}
		+ C_2\delta^2\|\mathcal{P} u\|_{H^1(\mathcal{D})}.
		\]
		Multiplying both sides by $\|(\mathcal{C}-z)^{-1}\|/\delta$ and rearranging the terms gives
		\[
		\big(1-C_2\delta\|(\mathcal{C}-z)^{-1}\|\big)\|\mathcal{P} u\|_{H^1(\mathcal{D})}
		\le 
		\frac{\|(\mathcal{C}-z)^{-1}\|}{\delta}\|\mathcal{P} f\|_{L^2(\mathcal{D})}
		+
		C_2\|(\mathcal{C}-z)^{-1}\|\|\mathcal{P}_{\perp}u\|_{H^1(\mathcal{D})},
		\]
		which is exactly \eqref{eq_regularity_sesq_solution_parallel}.
		
	\end{proof}

	\subsection{Proof of Theorem \ref{thm_cont_to_disc_resolvent_converge}}
	\label{sec_resolvent_convergence_proof}
	
	Step 1. We first prove the norm convergence of the resolvents on $\operatorname{Ran}\mathcal{P}_{\perp}$. Suppose that $u\in H^1(\mathcal{D})$ solves $\mathfrak{a}(u,v;\delta z,\delta)=(f,v)_{L^2(\mathcal{D})}$ with $f\in \operatorname{Ran}\mathcal{P}_{\perp}$. Then, substituting \eqref{eq_regularity_sesq_solution_parallel} into \eqref{eq_regularity_sesq_solution_perp} with $\mathcal{P} f=0$ yields
	\begin{equation*}
		\|\mathcal{P}_{\perp}u\|_{H^1(\mathcal{D})}\le  C_1\Big[\|f\|_{L^2(\mathcal{D})}+ 
		\frac{C_2\delta\|(\mathcal{C}-z)^{-1}\|_{\mathcal{B}(\ell^2(\Sigma))}}{1-C_2\delta \|(\mathcal{C}-z)^{-1}\|_{\mathcal{B}(\ell^2(\Sigma))}}\|\mathcal{P}_{\perp} u\|_{H^1(\mathcal{D})} \Big].
	\end{equation*}
	We then derive
	\begin{equation} \label{eq_cont_to_disc_resolvent_converge_proof_1}
		\|\mathcal{P}_{\perp}u\|_{H^1(\mathcal{D})}\le  2C_1  \|f\|_{L^2(\mathcal{D})},
	\end{equation}
	given that
	\begin{equation} \label{eq_cont_to_disc_resolvent_converge_proof_2}
		\delta<\min\Big(\frac{1}{2(C_1+1)C_2\|(\mathcal{C}-z)^{-1}\|_{\mathcal{B}(\ell^2(\Sigma))}},\delta_0\Big),
	\end{equation}
	where $\delta_0$ is the small parameter given in \Cref{lem_regularity_sesq_solution}. By \eqref{eq_cont_to_disc_resolvent_converge_proof_1}, it is clear that
	\begin{equation} \label{eq_cont_to_disc_resolvent_converge_proof_3}
		\|\delta\mathcal{P}_{\perp}\mathcal{R}(\delta z,\delta)\mathcal{P}_{\perp}\|_{\mathcal{B}(L^2(\mathcal{D}))}
		\le   2C_1\delta \to 0,
	\end{equation}
	as $\delta\to 0$, which concludes the norm convergence of the resolvents on $\operatorname{Ran}\mathcal{P}_{\perp}$.
	
	Step 2. Next, we show the convergence of the off-diagonal components, \emph{i.e.}, $\delta\mathcal{P}_{\perp}\mathcal{R}(\delta z,\delta)\mathcal{P}$ and $\delta\mathcal{P}\mathcal{R}(\delta z,\delta)\mathcal{P}_{\perp}$, as $\delta$ goes to zero. We prove only the first statement, since the second statement can be treated similarly. Suppose that $u\in H^1(\mathcal{D})$ solves $\mathfrak{a}(u,v;\delta z,\delta)=(f,v)_{L^2(\mathcal{D})}$ with $f\in \operatorname{Ran}\mathcal{P}$. Again, using Lemma \ref{lem_regularity_sesq_solution} and recalling that this time we have $\mathcal{P}_{\perp} f=0$, one sees
	\begin{equation*}
		\begin{aligned}
			\|\mathcal{P}_{\perp}u\|_{H^1(\mathcal{D})} 
			&\le   
			\frac{C_1\|(\mathcal{C}-z)^{-1}\|_{\mathcal{B}(\ell^2(\Sigma))}}{1-C_2\delta \|(\mathcal{C}-z)^{-1}\|_{\mathcal{B}(\ell^2(\Sigma))} }\| f\|_{L^2(\mathcal{D})}\\
			&\quad +\frac{C_1C_2\delta\|(\mathcal{C}-z)^{-1}\|_{\mathcal{B}(\ell^2(\Sigma))}}{1-C_2\delta \|(\mathcal{C}-z)^{-1}\|_{\mathcal{B}(\ell^2(\Sigma))}}\|\mathcal{P}_{\perp} u\|_{H^1(\mathcal{D})},
		\end{aligned}
	\end{equation*}
	or, more clearly,
	\begin{equation} \label{eq_cont_to_disc_resolvent_converge_proof_4}
		\|\mathcal{P}_{\perp}u\|_{H^1(\mathcal{D})} \le   
		2C_1\|(\mathcal{C}-z)^{-1}\|_{\mathcal{B}(\ell^2(\Sigma))}\| f\|_{L^2(\mathcal{D})},
	\end{equation}
	given that \eqref{eq_cont_to_disc_resolvent_converge_proof_2} holds. Hence, we conclude by \eqref{eq_cont_to_disc_resolvent_converge_proof_4} that
	\begin{equation} \label{eq_cont_to_disc_resolvent_converge_proof_5}
		\|\delta\mathcal{P}_{\perp}\mathcal{R}(\delta z,\delta)\mathcal{P}\|_{\mathcal{B}(L^2(\mathcal{D}))}
		\le  2C_1\delta\|(\mathcal{C}-z)^{-1}\|_{\mathcal{B}(\ell^2(\Sigma))} \to 0,
	\end{equation}
	as $\delta\to 0$. We can also conclude that 
	\begin{equation} \label{eq_cont_to_disc_resolvent_converge_proof_5p}
		\|\delta\mathcal{P}\mathcal{R}(\delta z,\delta)\mathcal{P}_{\perp}\|_{\mathcal{B}(L^2(\mathcal{D}))}
		\le  2C_1\delta\|(\mathcal{C}-z)^{-1}\|_{\mathcal{B}(\ell^2(\Sigma))} \to 0.
	\end{equation}
	
	Step 3. Finally, we prove that
	\begin{equation} \label{eq_cont_to_disc_resolvent_converge_proof_6}
		\|\delta\mathcal{P}\mathcal{R}(\delta z,\delta)\mathcal{P}-P^{\ast}(\mathcal{C}-z)^{-1}P\|_{\mathcal{B}(L^2(\mathcal{D}))}\le  2C_2(1+2C_1)\delta \|(\mathcal{C}-z)^{-1}\|_{\mathcal{B}(\ell^2(\Sigma))}^2.
	\end{equation}
	As in Step 2, we suppose that $u\in H^1(\mathcal{D})$ solves $\mathfrak{a}(u,v;\delta z,\delta)=(f,v)_{L^2(\mathcal{D})}$ with $f\in \operatorname{Ran}\mathcal{P}$. Recall that we have shown in \eqref{eq_regularity_sesq_solution_proof_6} that $Pu$ satisfies
	\begin{equation*}
		\delta(\mathcal{C}-z)Pu=P f +\delta P_{\partial}\big(\mathcal{T}^{\delta z}[\mathcal{P}_{\perp} u] +(\mathcal{T}^{\delta z}-\mathcal{T}^{0})[\mathcal{P} u]\big).
	\end{equation*}
	Hence, letting $a:=(\mathcal{C}-z)^{-1}Pf$, we know that
	\begin{equation*}
		(\mathcal{C}-z)(\delta Pu -a)=\delta P_{\partial}\big(\mathcal{T}^{\delta z}[\mathcal{P}_{\perp} u] +(\mathcal{T}^{\delta z}-\mathcal{T}^{0})[\mathcal{P} u]\big).
	\end{equation*}
	Using the estimate in \eqref{eq_regularity_sesq_solution_proof_7}, the right side is bounded as
	\begin{equation} \label{eq_cont_to_disc_resolvent_converge_proof_7}
		\|(\mathcal{C}-z)(\delta Pu -a)\|_{\ell^2(\Sigma)}\le  C_2\delta\big(\|\mathcal{P}_{\perp} u\|_{H^1(\mathcal{D})}+\delta \|\mathcal{P} u\|_{H^1(\mathcal{D})}\big).
	\end{equation}
	First, we bound $\mathcal{P} u$ by $\mathcal{P}_{\perp} u$ and $f$ using \eqref{eq_regularity_sesq_solution_parallel}, which produces
	\begin{equation*}
		\begin{aligned}
			&\|(\mathcal{C}-z)(\delta Pu -a)\|_{\ell^2(\Sigma)} \\
			&\le  C_2\delta\big(1+2C_2\delta\|(\mathcal{C}-z)^{-1}\|_{\mathcal{B}(\ell^2(\Sigma))}\big)\|\mathcal{P}_{\perp} u\|_{H^1(\mathcal{D})}\\
			&\quad+2C_2\delta \|(\mathcal{C}-z)^{-1}\|_{\mathcal{B}(\ell^2(\Sigma))}\|\mathcal{P} f\|_{L^2(\mathcal{D})} \\
			&\le  2C_2\delta \|\mathcal{P}_{\perp} u\|_{H^1(\mathcal{D})} + 2C_2\delta \|(\mathcal{C}-z)^{-1}\|_{\mathcal{B}(\ell^2(\Sigma))}\|\mathcal{P} f\|_{L^2(\mathcal{D})},
		\end{aligned}
	\end{equation*}
	where the restriction \eqref{eq_cont_to_disc_resolvent_converge_proof_2} on $\delta$  is applied. Then, we replace $\mathcal{P}_{\perp} u$ with $f$ using \eqref{eq_cont_to_disc_resolvent_converge_proof_4}:
	\begin{equation} \label{eq_cont_to_disc_resolvent_converge_proof_8}
		\|(\mathcal{C}-z)(\delta Pu -a)\|_{\ell^2(\Sigma)}
		\le  2C_2(1+2C_1)\delta \|(\mathcal{C}-z)^{-1}\|_{\mathcal{B}(\ell^2(\Sigma))}\|\mathcal{P} f\|_{L^2(\mathcal{D})}.
	\end{equation}
	Finally, we bound the left side of \eqref{eq_cont_to_disc_resolvent_converge_proof_8} from below as in \eqref{eq_regularity_sesq_solution_proof_8}, which completes the proof of \eqref{eq_cont_to_disc_resolvent_converge_proof_6} by the following identity:
	\begin{equation*}
		\|\delta Pu -a\|_{\ell^2(\Sigma)}=\|P^{\ast}(\delta Pu -a)\|_{L^2(\mathcal{D})}
		=\|\delta \mathcal{P} u -P^{\ast}(\mathcal{C}-z)^{-1}Pf\|_{L^2(\mathcal{D})},
	\end{equation*}
	where we used the normalization $|D_{\sigma}|=1$ for all $\sigma\in \Sigma$.
	
	Summary. By \eqref{eq_cont_to_disc_resolvent_converge_proof_3}, \eqref{eq_cont_to_disc_resolvent_converge_proof_5}, \eqref{eq_cont_to_disc_resolvent_converge_proof_5p} and \eqref{eq_cont_to_disc_resolvent_converge_proof_6}, we conclude that
	\begin{equation*}
		\|\delta\mathcal{R}(\delta z,\delta)-P^{\ast}(\mathcal{C}-z)^{-1}P\|_{\mathcal{B}(L^2(\mathcal{D}))}\le  C\delta \|(\mathcal{C}-z)^{-1}\|_{\mathcal{B}(\ell^2(\Sigma))}^2,
	\end{equation*}
	given that
	\begin{equation*}
		\delta<\delta_1\min\{1,\frac{1}{\|(\mathcal{C}-z)^{-1}\|_{\mathcal{B}(\ell^2(\Sigma))}}\},
	\end{equation*}
	where $C,\delta_1>0$ depend only on $|\mathbb{K}|$. This completes the proof.

	\section{Exponential decay of the off-diagonal elements of the capacitance operator}\label{apsec:expdecay}
	In this appendix, we adopt a heat kernel estimate to estimate the exponential decay of the capacitance operator. 
	Define the bilinear form $\mathfrak{q}_{\Omega}$ by
	\[ \mathfrak{q}_{\Omega} (u,v):= \int_{\Omega} \nabla u \cdot \overline{\nabla v} \, \du x, \quad u,v\in H_0^1(\Omega).\]
	By assumption \eqref{eq_geometry_assumption_3}, the Poincar\'e inequality holds on $\Omega$. Therefore, there exists a positive constant $\lambda_{\Omega}>0$ such that 
	\[ \| \nabla u \|_{L^2(\Omega)} \ge \lambda_{\Omega} \| u \|_{L^2(\Omega)}. \]
	Let the heat kernel $p(t,x;y)$ be given by
	\begin{gather}
		\left\{\begin{aligned}
			\partial_t p - \Delta_x p  &= \delta(t)\delta(x-y),\quad x\in \Omega, t\in[0,\infty),\\
			p &= 0,\quad x\in\partial \Omega.
		\end{aligned}\right.
	\end{gather}
	We have the following heat kernel estimate for $\eta\in(0,1)$, which follows from \cite[Corollary 6.15]{Ouhabaz2005}:
	\[ 0\le p(t,x;y) \le C_{\eta}t^{-1}\eu^{-(1-\eta) \lambda_{\Omega} t}\eu^{-(1-\eta)\|x-y\|^2/(4t)},\quad x,y\in \Omega, t> 0. \]
	Then the Green function on $\Omega$ is given by 
	\begin{equation}
		G(x;y) = \int_{0}^{\infty} p(t,x;y)\, \du t.
	\end{equation}
	Using the identity 
	\[ \int_{0}^{\infty} t^{-1}\eu^{-(1-\eta) \lambda t}\eu^{-(1-\eta)\|x-y\|^2/(4t)}\, \du t = 2K_0(2(1-\eta)\sqrt{\lambda_{\Omega}}\| x-y \|),\quad x\neq y, \]
	where $K_0$ is the zeroth modified Bessel function, we conclude that for $\| x-y \| \ge \mu$ and $\eta\in(0,1)$, there exists a constant $C_{\mu,\eta}$ such that 
	\begin{equation}\label{apeqn:Greenfunc_estimate}
		0\le G(x;y) \le C_{\mu,\eta} \frac{\eu^{-(1-\eta)\sqrt{\lambda_{\Omega}}\| x-y \|}}{\sqrt{\| x-y \|}}.
	\end{equation}
	Therefore, for a given $\sigma\in\Sigma $, the function $V_{\sigma}$ defined in \eqref{eqn:HarmonicFunc_full} is given by 
	\begin{equation}\label{apeqn:VGreen_rep}
		V_{\sigma}(x) = \int_{\partial D_{\sigma}} \frac{\partial G(x;y)}{\partial \nu_{y}}\, \du \sigma(y),\quad x\in \Omega.
	\end{equation}
	For $\sigma'\neq \sigma$, the geometric assumption \eqref{eq_geometry_assumption_3} ensures the following gradient estimate at the boundary; see \cite{Gilbarg2001}.
	\begin{lemma}\label{aplem:normal_deri}
		Suppose that $v$ is a harmonic function in a neighborhood $N_{\sigma'}$ of $\partial D_{\sigma}$ that satisfies
		\[
		v=0,\quad y\in  \partial D_{\sigma'},
		\]
		where 
		\[
		N_{\sigma'}:= \bigcup_{y\in \partial D_{\sigma'}}B(y,c_4)\cap \Omega,\quad c_4>0.
		\]
		Then, there exists a constant $C$ such that
		\begin{equation*}
			\sup_{\partial D_{\sigma'}} \bigg|\frac{\partial v}{\partial \nu}\bigg|
			\le C \sup_{N_{\sigma'}} |v|.
		\end{equation*}
		Furthermore, there exists a constant $C_b$ such that the following estimate holds:
		\begin{equation*}
			\int_{\partial D_{\sigma'}} \bigg|\frac{\partial v}{\partial \nu}\bigg| \, \du\sigma
			\le C_b \sup_{N_{\sigma'}}|v|.
		\end{equation*}
	\end{lemma}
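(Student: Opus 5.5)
This is a standard boundary gradient estimate for harmonic functions vanishing on a $C^1$ (in fact we would want $C^{1,\alpha}$) boundary portion, and I would prove it by reducing to finitely many reference configurations and then applying a local Schauder-type estimate up to the boundary. \textbf{Step 1 (uniformity by reduction to reference shapes).} By Assumption~\ref{asmp_geometry_assumption}, every $D_{\sigma'}$ is a translate of one of the finitely many reference resonators $D^{(n)}$. Moreover, the separation bounds \eqref{eq_geometry_assumption_4} and \eqref{eq_geometry_assumption_3} with $c>0$ ensure the following: if $c_4<c/2$, then $N_{\sigma'}$ contains no other resonator boundary, and $N_{\sigma'}-\bm u$ is the fixed collar $N^{(n)}:=\{x\notin \overline{D^{(n)}}:\operatorname{dist}(x,\partial D^{(n)})<c_4\}$. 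Hence it suffices to prove both inequalities for a single reference collar $N^{(n)}$. The resulting constants are then uniform in $\sigma'$ after taking the maximum over $n\le N_{\mathrm{type}}$.

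\textbf{Step 2 (local boundary gradient estimate).} On $N^{(n)}$, $v$ is harmonic and $v=0$ on $\partial D^{(n)}$. I would cover $\partial D^{(n)}$ by finitely many boundary balls $B(y_i,c_4/2)$. In each half-ball region $B(y_i,c_4)\cap N^{(n)}$ I would invoke the global-in-boundary Schauder estimate \cite[Cor.~8.36 / Thm.~8.33]{Gilbarg2001}. Since the right-hand side vanishes and the boundary data vanish, this gives
\[
\|v\|_{C^{1,\alpha}(B(y_i,c_4/2)\cap \overline{N^{(n)}})}\le C\sup_{B(y_i,c_4)\cap N^{(n)}}|v|.
\]
Taking the maximum over the finitely many $i$ gives $\sup_{\partial D^{(n)}}|\partial_\nu v|\le C\sup_{N^{(n)}}|v|$, which is the first claim. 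The second claim follows immediately by integration: $\int_{\partial D_{\sigma'}}|\partial_\nu v|\,\du\sigma\le |\partial D_{\sigma'}|\,C\sup|v|\le \ell_+ C\sup_{N_{\sigma'}}|v|$, so one may take $C_b=\ell_+C$.

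\textbf{Main obstacle.} The delicate point is the boundary regularity. A pointwise gradient bound up to the boundary fails in general for merely $C^1$ boundaries, where the normal derivative can blow up, as in Dini-type counterexamples. I would therefore first try to use the smoothness actually available. If needed, I would strengthen the hypothesis to $C^{1,\alpha}$ (or $C^{1}$-Dini) boundaries, which covers all disks used in the paper. Under this hypothesis, the Schauder/Kellogg estimate above applies directly. If one insists on $C^1$ only, I would fall back to the integrated statement, which is all that Lemma~\ref{lem:patch-entry-error} uses. For a harmonic $v$ vanishing on $\partial D$, one has $\partial_\nu v\in L^2(\partial D)$ via Rellich/Dahlberg estimates on Lipschitz domains, with $\|\partial_\nu v\|_{L^2(\partial D)}\le C\|v\|_{L^\infty(N)}$ after a cutoff and Caccioppoli inequality. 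Cauchy--Schwarz on $\partial D$ then yields the $L^1$ bound with a uniform constant.
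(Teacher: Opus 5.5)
Your proposal is correct and follows essentially the paper's route: the paper gives no argument beyond citing Gilbarg--Trudinger for a standard boundary gradient estimate, which is exactly your Schauder step. As in your Step 1, uniformity in $\sigma'$ comes from the finitely many reference shapes, together with the harmless reduction to small $c_4$ (shrinking $c_4$ only decreases $\sup_{N_{\sigma'}}|v|$). Your regularity caveat is also well-founded: the pointwise bound genuinely needs $C^{1,\alpha}$ (or Dini-type) boundaries and can fail for the merely $C^1$ resonators of the paper's general setting, while your Rellich/Dahlberg fallback does secure the integrated bound, which is the form used to control the capacitance coefficients.
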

	Applying the above lemma to $G(x;y)$ and $V_{\sigma}$ on $\partial D_{\sigma'}$, we derive the following estimate.
	\begin{theorem}\label{apthm:exp_decay}
		If the resonators $\mathcal{D}$ satisfy \Cref{asmp_geometry_assumption}, then there exist positive constants $C,c$ such that the coefficients of the capacitance operator $\mathcal{C}$ satisfy for $\sigma\neq \sigma'$:
		\begin{equation} \label{eqn:exp_decay}
			|\mathcal{C}(\sigma,\sigma')| \le C \frac{\eu^{-c\operatorname{dist}(D_{\sigma},D_{\sigma'})}}{\sqrt{\operatorname{dist}(D_{\sigma},D_{\sigma'})}}.
		\end{equation}
	\end{theorem}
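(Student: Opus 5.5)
We will prove \eqref{eqn:exp_decay} by combining the Green's function representation \eqref{apeqn:VGreen_rep} of $V_\sigma$, the pointwise Green's function bound \eqref{apeqn:Greenfunc_estimate}, and the boundary gradient estimate of \Cref{aplem:normal_deri}.

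\textbf{Step 1 (reduction to the boundary flux).} Fix $\sigma\neq\sigma'$. By the equivalent definition of $\mathcal{C}$ via \eqref{eqn:HarmonicFunc_full},
\[
|\mathcal{C}(\sigma,\sigma')| = \Big|\int_{\partial D_{\sigma'}}\frac{\partial V_\sigma}{\partial\nu}\,\du\sigma\Big|.
\]
Since $V_\sigma$ is harmonic in $\Omega$ and vanishes on $\partial D_{\sigma'}$, \Cref{aplem:normal_deri} applies with $v=V_\sigma$ on the neighbourhood $N_{\sigma'}$. It gives
\[
|\mathcal{C}(\sigma,\sigma')|\le C_b\sup_{N_{\sigma'}}|V_\sigma|.
\]
We take $c_4<c/2$, where $c$ is the separation constant of \eqref{eq_geometry_assumption_4}. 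Then $N_{\sigma'}$ stays a fixed positive distance from $\partial D_\sigma$, and $C_b$ is uniform because only finitely many resonator shapes occur.

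\textbf{Step 2 (pointwise bound on $V_\sigma$).} For $x\in N_{\sigma'}$ we use \eqref{apeqn:VGreen_rep}:
\[
|V_\sigma(x)|\le\int_{\partial D_\sigma}\Big|\frac{\partial G(x;y)}{\partial\nu_y}\Big|\,\du\sigma(y).
\]
To bound the normal derivative in $y$, we apply \Cref{aplem:normal_deri} a second time. By symmetry of the Dirichlet Green function, $y\mapsto G(x;y)$ is harmonic near $\partial D_\sigma$ (away from $x$) and vanishes on $\partial D_\sigma$. Hence
\[
\int_{\partial D_\sigma}|\partial_{\nu_y}G(x;y)|\,\du\sigma(y)\le C_b\sup_{y\in N_\sigma}G(x;y).
\]
For $x\in N_{\sigma'}$ and $y\in N_\sigma$ we have $\|x-y\|\ge \operatorname{dist}(D_\sigma,D_{\sigma'})-2c_4\ge\mu>0$. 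So \eqref{apeqn:Greenfunc_estimate} applies and gives
\[
G(x;y)\le C_{\mu,\eta}\frac{\eu^{-(1-\eta)\sqrt{\lambda_\Omega}\|x-y\|}}{\sqrt{\|x-y\|}}.
\]
Writing $d:=\operatorname{dist}(D_\sigma,D_{\sigma'})$, we use $\|x-y\|\ge d-2c_4$ and $\|x-y\|\gtrsim d$, which holds because $d\ge c$. This yields a bound $C\eu^{-c'd}/\sqrt d$ with $c'=(1-\eta)\sqrt{\lambda_\Omega}$; the factor $\eu^{2c_4c'}$ is absorbed into $C$.

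\textbf{Step 3 (assembly).} Combining Steps 1 and 2 gives \eqref{eqn:exp_decay}, with constants independent of $\sigma,\sigma'$.

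\textbf{Main obstacle.} The delicate point is the uniformity of the constants in \Cref{aplem:normal_deri} across all resonators. This comes from the boundary regularity of $\partial D^{(n)}$ and from finitely many shapes. One caveat: interior Schauder-type gradient bounds up to the boundary need slightly more than $C^1$ regularity (Dini or $C^{1,\alpha}$). If this proves problematic, the fallback is to avoid pointwise normal derivatives. One would integrate by parts against a smooth cutoff $\chi$ equal to $1$ near $\partial D_{\sigma'}$ and supported in $N_{\sigma'}$, writing the flux as $\int_\Omega\nabla V_\sigma\cdot\nabla\chi$ plus a Laplacian term. A Caccioppoli inequality then controls this by $\sup_{N_{\sigma'}}|V_\sigma|$.
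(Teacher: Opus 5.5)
Your proof is correct and is essentially the paper's argument. The paper applies \Cref{aplem:normal_deri} to $V_\sigma$ on $\partial D_{\sigma'}$ and to $y\mapsto G(x;y)$ on $\partial D_\sigma$, combines this with the representation \eqref{apeqn:VGreen_rep} and the bound \eqref{apeqn:Greenfunc_estimate}, and uses the separation $\operatorname{dist}(D_\sigma,D_{\sigma'})\ge c$ to absorb the collar width $c_4$ into the constants. Your regularity caveat applies equally to the paper's lemma. Your cutoff fallback, together with the analogous splitting $V_\sigma=\chi_\sigma+w$ with $w(x)=\int_\Omega G(x;y)\Delta\chi_\sigma(y)\,\du y$ for Step 2, would remove the need for pointwise normal derivatives altogether.
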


	\section{Numerical methods}
	\label{apsec:numericalmethod}
	\subsection{Methods for calculating the capacitance operator}
	We adopt the boundary element method to calculate the capacitance operators. More precisely, for each patch $\mathsf{P}$, we solve the following boundary integral equation of the first kind:
	\begin{equation}\label{eqn:SingleLayerEqn}
		\mathcal{S}_{\mathsf{P}}[\phi]:=\int_{\partial \mathsf{P}}G(x-y)\phi(y)\, \du \sigma(y) = g(x),\quad x\in\partial \mathsf{P}.
	\end{equation}
	Here, the two-dimensional Green function $G$ is given by
	\[ G(x-y) = \frac{1}{2\pi}\ln|x-y|. \] 
	All connected components of $\partial \mathsf{P}$ are partitioned into straight segments and piecewise linear basis functions are used. Weakly singular integrals are evaluated using the logarithmic Gauss quadrature, while regular interactions are evaluated using the tensor-product Gauss-Legendre quadrature.
	The resulting linear system is of the form
	\[
	\mathbf{S}\boldsymbol{\phi} = \mathbf{M}\mathbf g,
	\]
	where $\mathbf{S}$ is the Galerkin matrix of the single-layer boundary integral operator, $\mathbf{M}$ is the mass matrix, and $\mathbf g$ is the Dirichlet datum.
	
	From the jump relation for the single layer potential, the exterior normal derivative is recovered as
	\begin{gather*}
		\begin{aligned}
			\left.\frac{\partial \mathcal{S}_{\mathsf{P}}[\phi]}{\partial \nu}\right|_{+}(x) &= \frac{1}{2}\phi(x)+\int_{\partial \mathsf{P}} \frac{\partial G}{\partial \nu_{x} }(x-y)\phi(y)\, \du \sigma(y)\\
			&:= \frac{1}{2}\phi(x) + (\mathcal{K}_{\mathsf{P}})^\ast[\phi],
		\end{aligned}
	\end{gather*}
	where $(\mathcal{K}_{\mathsf{P}})^\ast$ denotes the Neumann-Poincar\'e operator; see, for instance, \cite{Ammari2018}.
	When the Dirichlet datum $g$ is piecewise constant on each connected component of $\partial P$, the solution $\phi$ to \eqref{eqn:SingleLayerEqn} satisfies
	\[ \phi\in \ker(-\frac{1}{2}\operatorname{Id} + (\mathcal{K}_{\mathsf{P}})^\ast). \]
	Hence, $\phi$ can be used as a substitute for the exterior normal derivative. Numerical tests show that the error is of the order of $10^{-15}$.
	\subsection{Methods for calculating the eigenvalues} \label{subsec:pn}
	
	To compute the eigenvalues and the corresponding localized modes of the infinite-dimensional capacitance operator $\mathcal{C}$ and its patch approximation $\widetilde{\mathcal{C}}$, we employ the rectangular truncation framework recently established in \cite{Colbrook2019,Colbrook2023}, which ensures the absence of spectral pollution and also provides certifiable error bounds.
	
	In the context of capacitance operators, this method uses a rectangular truncation $ P_{\mathcal{N}'} \mathcal{C} P_{\mathcal{N}} \in \mathbb{R}^{\mathcal{N}' \times \mathcal{N}} $. Here, $P_{\mathcal{N}}$ is the projection on the computational domain $\mathcal{N}\subset \Sigma $. The set $\mathcal{N}'\supseteq \mathcal{N}$ is chosen so that the truncated matrix contains all significant interactions originating from the computational domain $\mathcal{N}$. For example, when $\Lambda = \mathbb{Z}^2$, we may choose 
	\begin{gather*}
		\begin{aligned}
			\mathcal{N}&= \{ -\widetilde{N}_{\mathrm{trunc}},\ldots ,\widetilde{N}_{\mathrm{trunc}} \}^2\times \{1,\ldots,K\},\\
			\mathcal{N}'&= \{ -(\widetilde{N}_{\mathrm{trunc}}+\widetilde{M}_{\mathrm{trunc}}),\ldots ,\widetilde{N}_{\mathrm{trunc}}+\widetilde{M}_{\mathrm{trunc}} \}^2\times\{1,\ldots,K\},
		\end{aligned}
	\end{gather*}
	for some $\widetilde{M}_{\mathrm{trunc}},\widetilde{N}_{\mathrm{trunc}}\in\mathbb{N}$. Note that when $\widetilde{M}_{\mathrm{trunc}}=0$, the rectangular truncation becomes the usual supercell truncation.
	\par 
	
	To find the eigenvalues, the algorithm relies on computing the smallest singular value $s_{\mathrm{min}}$ of the shifted rectangular matrix for $z\in\mathbb{C}$:
	\begin{equation} \label{def:fz}
		F_\mathcal{N}(z) := s_{\min}\big(P_{\mathcal{N}'} (\mathcal{C}-z\mathcal{M}) P_{\mathcal{N}}\big).
	\end{equation}
	The operator $\mathcal{M}$ is defined in \eqref{eqn:MassOperator}. 
	The value $F_\mathcal{N}(z)$ gives an upper bound for the distance between the parameter $z$ and the spectrum of the capacitance operator $\mathcal{C}$; see \cite{Colbrook2019}. By scanning $z$ over an interval of interest, the true eigenvalues correspond to the local minima of $F_\mathcal{N}(z)$.

	\bibliographystyle{amsplain}
	\bibliography{references}
	
\end{document}